\documentclass[11pt,reqno]{article}   %style-phys/cs 
\usepackage{fullpage}

\usepackage{microtype}
\usepackage[unicode=true]{hyperref}
\usepackage[british]{babel}
\usepackage{amsmath}
\usepackage{cite}
\usepackage{amsfonts}
\usepackage{amssymb}
\usepackage{amsthm}
\usepackage{cleveref}  
\usepackage{authblk}
\usepackage{mypack} %custom commands
\usepackage[pdftex]{color,graphicx}
\usepackage{adjustbox}
\usepackage{soul}
\usepackage{comment}
\usepackage{bbold}
\usepackage{tikz} %for diagrams
\usetikzlibrary{decorations.pathreplacing,angles,quotes}
\usetikzlibrary{matrix,arrows,decorations.pathmorphing}
\usepackage{tikz-cd}
\usetikzlibrary{arrows}
\usetikzlibrary{decorations.markings}

\usetikzlibrary{arrows.meta,decorations.markings}

\usepackage{bbold}
\usepackage{diagbox}
\usepackage{caption}
\usepackage{subcaption}
\usepackage{pdfpages} 
\usepackage{blkarray}
\usepackage{centernot}
\usepackage{mathtools}
\usepackage{stmaryrd}
\usepackage{soul}  %strikethrough text: \st{}
\usepackage{enumitem}

\def\on{\operatorname}

\definecolor{bluegray}{rgb}{0.4, 0.6, 0.8}
\definecolor{turquoise}{rgb}{0.2, 0.7, 0.6}
\usepackage{todonotes}
\newcounter{commcount}

\newcommand{\catCat}{\mathbf{Cat}}
\newcommand{\catPSh}{\mathbf{PSh}}
\newcommand{\catRel}{\mathbf{Rel}}

\title{{Possibilistic collapse and extremality of simplicial distributions}} 
 
\author{Aziz Kharoof\footnote{aziz.kharoof@bilkent.edu.tr} }
\author{Cihan Okay\footnote{cihan.okay@bilkent.edu.tr}} 
\affil{{\small{Department of Mathematics, Bilkent University, Ankara, Turkey}}}

\date{\today}

\begin{document}

\maketitle

\begin{abstract} 
{
Consistent families of locally defined probability distributions that do not admit a joint global distribution are known as contextual, with primary examples arising in quantum theory. In this paper, we study such families of distributions using the theory of simplicial distributions, and further develop the theory for possibilistic distributions defined over the Boolean semiring. We characterize possibilistic collapses of simplicial distributions geometrically using bundle scenarios. Using this characterization together with a new connectivity condition on the total space of a bundle scenario, we provide a criterion for detecting extremal simplicial distributions. In parallel, we develop an analogous theory for presheaves on simplicial complexes, describe possibilistic collapses of empirical models on them using event scenarios together with a categorical extremality condition, and relate the two frameworks via a comparison result. We provide examples of contextual simplicial distributions that arise from our criteria on scenarios of interest in quantum foundations, such as Bell scenarios and boundaries of standard simplices, the latter connecting to Vorob'ev's classical theorem on acyclic complexes.
}
\end{abstract}

\tableofcontents

\section{Introduction}

{ 
Contextuality is a fundamental feature of probabilistic models arising in quantum theory, capturing the impossibility of extending a consistent family of locally defined probability distributions to a global probability distribution \cite{bell64,KS67,brunner2014bell,budroni2022kochen}. Over the past decade, several mathematical frameworks have been developed to formalize and study contextuality, most notably the sheaf-theoretic approach of Abramsky and Brandenburger~\cite{abramsky2011sheaf} and, more recently, the simplicial set approach of the current authors~\cite{okay2022simplicial}. In this paper, we develop the theory in the possibilistic case and provide topological and categorical conditions for detecting extremal distributions.
}

{ 
In the sheaf-theoretic approach of \cite{abramsky2011sheaf}, contextuality is described using empirical models defined on measurement scenarios. In this framework, a measurement scenario is described by a simplicial complex $\Sigma$ together with a family of outcome sets $O_x$ for each $x$ in the vertex set $V$. Here, the vertex set $V$ represents the set of all measurements that can be performed, and the simplices $\sigma\in\Sigma$ represent the subsets of measurements that can be jointly performed. Each measurement $x\in V$ can take an outcome in a set $O_x$. An empirical model then consists of probability distributions $p_\sigma$ on the products $\prod_{x\in \sigma} O_x$ that are compatible under inclusions of simplices. In this work, we work with arbitrary presheaves on a simplicial complex, i.e., functors of the form
\[
F\colon \catC_{\Sigma}^{\op} \to \catSet .
\]
Here, the domain category is the opposite of the simplex poset of the complex. We define empirical models, denoted by $\Emp(F)$, as compatible families of distributions on $F(\sigma)$ under inclusions of simplices. An empirical model is called contextual if it does not arise as the marginals of a distribution on the limit of $F$. In the case where outcomes are given as products of the $O_x$, such a distribution would be defined on the set $\prod_{x\in V} O_x$ of all possible outcomes.
}

{
More recently, a more general perspective on contextuality was introduced in \cite{okay2022simplicial,barbosa2023bundle}, based on the theory of simplicial distributions. In this approach, a measurement scenario is represented by a simplicial map
\[
f\colon E \to X.
\]
Here, $X$ represents the simplicial set of measurements, and for an $n$-simplex $x\in X_n$, the fiber $f_n^{-1}(x)$ represents the possible outcomes for this measurement. A simplicial distribution $p$ consists of a family of probability distributions $p_x$, one for each simplex \(x \in X_n\), on the finite fiber \(f_n^{-1}(x)\), subject to compatibility conditions with respect to the face and degeneracy maps of the simplicial sets \(E\) and \(X\). 
We denote by $\sDist(f)$ the set of all simplicial distributions on $f$. A simplicial distribution $p$ is called contextual if it does not arise as a probabilistic mixture of the sections of $f$. 
}

{
The sheaf-theoretic perspective based on empirical models embeds into the theory of simplicial distributions. We construct a realization functor that sends a presheaf $F$ over a simplicial complex $\Sigma$ to a simplicial map $f_F:E(F)\to S(\Sigma)$ and, moreover, show that there is an affine isomorphism 
\[
\on{Emp}(F) \to \on{sDist}(f_F)
\]
from the convex set of empirical models on $F$ to the convex set of simplicial distributions on $f_F$. This construction is very useful for importing results from simplicial distributions to empirical models. Moreover, it is natural with respect to morphisms that amount to changing the measurements and outcomes. Such morphisms are best expressed using a relative version of the Grothendieck construction, which captures the $2$-categorical data leading to categories of scenarios. Loosely speaking, a morphism between two scenarios, expressed as simplicial maps, amounts to changing the measurements and outcomes, that is, the base and the fibers. Simplicial distributions, and similarly empirical models, can be regarded as functors on these categories of scenarios. Our embedding gives a natural isomorphism between these functors of empirical models and simplicial distributions. These categories of scenarios were introduced in \cite{barbosa2023bundle}, together with a different realization construction, and the relative Grothendieck point of view was introduced in \cite{kharoof2025simplicial}.
}

{
Under mild finiteness conditions on $X$ and the fibers of $f$, the set of simplicial distributions forms a convex polytope, that is, a convex set with finitely many extremal points, or vertices. Describing the extremal points of probability polytopes that arise in quantum foundations is an important problem \cite{pitowsky1989quantum,pr94,barrett2005nonlocal,jones2005interconversion}.
 Recently, simplicial distributions have proved useful in addressing this problem \cite{kharoof2023homotopical,
kharoof2024extremal,kharoof2026geometry,
kharoof2026vertex}. A well-known approach, going back to \cite{abramsky2012logical,abramsky2015paradox,abramsky2016possibilities}, is to use the semiring homomorphism $\RR_{\geq 0}\to \BB$ from the nonnegative reals to the Booleans. Using special types of simplicial maps, we characterize the possibilistic collapse of simplicial distributions:
\[
\kappa:\on{sDist}(f) \to \on{sDist}_\BB(f).
\]
Note that simplicial distributions can be defined over any semiring. 
The key definition is the notion of a bundle scenario: a surjective simplicial map $f:E\to X$ is called a bundle scenario if it satisfies the left lifting condition with respect to every simplicial map $\theta:\Delta[n]\to \Delta[m]$ between standard simplices. Writing $\on{Sub}(f)$ for the simplicial subsets $E'\subset E$ such that the restriction $f|_{E'}$ is a bundle scenario with finite bundles, we show that there is a natural bijection
\[
\on{sDist}_\BB(f) \cong  \on{Sub}(f).
\]
Again, this construction is natural with respect to morphisms of scenarios and lifts to a natural isomorphism between the relative Grothendieck constructions (Theorem~\ref{thm:possibilistic sdist as sub}). There is a parallel construction for possibilistic collapses of empirical models. In this case, special types of presheaves, which we call event scenarios, are used (Theorem~\ref{cor:EmpB=Esub}).
}

{
Using our characterization of possibilistic collapses of simplicial distributions in terms of bundle scenarios, we prove a topological criterion for a simplicial distribution to be a vertex (Theorem~\ref{thm:strongisvertx}). This extremality criterion is called the strong connectivity condition on the total space $E$.  

\begin{thm*}
Let $p$ be a simplicial distribution on a simplicial map $f\colon E\to X$, and let $g$ denote the bundle scenario corresponding to $\kappa(p)$, the possibilistic collapse of $p$. If $g$ is strongly connected, then $p$ is a vertex of $\sDist(f)$.
\end{thm*}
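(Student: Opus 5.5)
The plan is to use the standard support argument for extremality. Suppose $p = t q + (1-t) r$ with $q,r\in\sDist(f)$ and $0<t<1$. We will show $q = p$ (and hence $r=p$).

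\textbf{Step 1: reduce to the sub-bundle.} The support of $q_x$ is contained in the support of $p_x$ for every simplex $x$. Under the bijection $\sDist_\BB(f)\cong \on{Sub}(f)$, $\kappa(p)$ corresponds to the simplicial subset $E'\subset E$ whose $n$-simplices are the $e$ with $p_{f(e)}(e)>0$. So $q$ factors as a simplicial distribution on $g=f|_{E'}\colon E'\to X$, and $\kappa(q)$ corresponds to a sub-bundle scenario of $g$. Since $q$ and $p$ are both distributions on $g$, it therefore suffices to show the following: if $g$ is strongly connected, then $p$ is the only simplicial distribution on $g$ whose support is all of $E'$ and which lies in the open segment direction. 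Concretely, consider the signed difference $d = q - p$. It is a ``simplicial signed measure'' on $g$: it is compatible with faces and degeneracies, has total mass zero on each fiber, and satisfies $p+\epsilon d\ge 0$ for small $|\epsilon|$ of either sign, since $p-\epsilon d$ is a positive combination of $p$ and $r$ for small $\epsilon$.

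\textbf{Step 2: propagate ratios via connectivity.} Consider the ratio function $\rho(e)=q_{f(e)}(e)/p_{f(e)}(e)$ on the simplices $e$ of $E'$. I would show $\rho$ is constant along the relations that define strong connectivity. The idea is the face/degeneracy compatibility of $q$ and $p$, combined with the lifting property of the bundle scenario $g$. For a degeneracy $s_i e$ in a bundle scenario, the fiber over $s_i x$ restricted to $E'$ consists exactly of degeneracies, which forces $\rho(s_ie)=\rho(e)$. For a face $d_i e$, the marginal over $d_i x$ sums over the lifts; I expect strong connectivity to be exactly the condition guaranteeing that the relevant fibers over faces are ``single-valued''. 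An example is when a simplex $e$ is the unique lift in $E'$ over $x$ of its face, so that $p_x(e)=p_{d_ix}(d_ie)$ and likewise for $q$. Chaining these equalities along a strong path shows $\rho$ is constant on $E'$. Normalization on any fiber then gives $\rho\equiv 1$, so $q=p$.

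\textbf{Main obstacle.} The hard part is Step 2: matching the precise definition of strong connectivity (stated later in the paper) with what the marginal equations actually give. If strong connectivity means that any two simplices of $E'$ are linked by a chain in which consecutive simplices share a face $d_ie=d_je'$ whose fiber-over-face in $E'$ is a singleton, the argument above works directly. If it is weaker, I would instead argue by linear algebra. I would take the linear span of $\sDist(g)$ inside $\prod_x \RR^{g^{-1}(x)}$ and show that its dimension is $0$, by proving that the tangent space of signed measures $d$ vanishes, propagating $d=0$ along the connectivity relation. This amounts to identifying $p$ as the unique point of the face of $\sDist(f)$ determined by its support, which is equivalent to being a vertex.
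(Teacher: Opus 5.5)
Your main line is correct and uses the paper's key mechanism. The paper's definition of strong connectivity (Definition~\ref{def:strong-connectivity}) is exactly the unique-lift condition in your example sentence, with the single face $d_i$ replaced by an arbitrary injective ordinal map $\theta$. What must be unique is the lift in $E'$ over $f(e)$; the fiber of $g$ over the face need not be a singleton. So the linear-algebra fallback is unnecessary.

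Your Step~2 computation is Proposition~\ref{pro:uniqeextension}, and your observation that it applies to $q$ as well is right. Since $\operatorname{supp}(q_{\sigma})\subseteq E'_n\cap f_n^{-1}(\sigma)$, the nonzero terms in the marginal sum for $q_{\theta^\ast\sigma}(\theta^\ast e)$ are lifts in $E'$, and by uniqueness these consist of $e$ alone. Hence $\rho(e_1)=\rho(e_2)$ whenever $e_1\sim_x e_2$.

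One step needs repair. Strong connectivity only links \emph{generators} of $E'$, so you obtain $\rho$ constant on the generators of $E'$, not on all of $E'$. Normalizing on an arbitrary fiber is therefore not justified.

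Normalize instead on $g_n^{-1}(\sigma)$ for a generator $\sigma$ of $X$. Every simplex of $E'$ over $\sigma$ is a generator of $E'$:
\begin{itemize}
\item it is nondegenerate, since $\sigma$ is;
\item it cannot be a proper face of a nondegenerate $e_1\in E'$. Because $g$ lifts degeneracies, $g(e_1)$ would then be nondegenerate with $\sigma$ as a proper face, contradicting that $\sigma$ is a generator.
\end{itemize}
This is how the paper uses Corollary~\ref{cor:gentogen}. Hence $\rho=1$ on that fiber, so $q_\sigma=p_\sigma$ for every generator $\sigma$. Then $q=p$, because every simplex of $X$ is a face or degeneracy of a generator. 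The ratio is not even needed: the same argument shows that $q$ itself is constant on the generators of $E'$, and normalization forces that constant to be $1/|g_n^{-1}(\sigma)|$.

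The difference from the paper is packaging. The paper applies Proposition~\ref{pro:uniqeextension} only to $p$. It concludes that $p$ equals $1/|g_n^{-1}(\sigma)|$ on its support over each generator, hence $\kappa_f^{-1}(\kappa_f(p))=\{p\}$, and then invokes Corollary~\ref{cor:pvertexifandsimplicialAAAAA}. That corollary handles distributions with strictly smaller support through the $\tfrac12 p+\tfrac12 p'$ trick and the minimality criterion of Proposition~\ref{pro:simprvertmin}, which rests on the polytope structure and an external result.

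Your decomposition $p=tq+(1-t)r$ treats the possibly smaller-support component $q$ directly. Your argument is therefore self-contained and proves minimality of $p$ outright. The paper's route additionally records that $p$ is uniform on its support, and it fits the paper's picture of minimal sub-bundle scenarios.
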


We also introduce an analogous categorical condition for presheaves on simplicial complexes (Theorem~\ref{thm:1vert}), which remains applicable in some cases where the topological condition fails. We show that the topological condition implies the categorical one (Proposition~\ref{pro:strongconcimplieszigzag}), so the latter detects a strictly larger class of extremal distributions, though neither is a complete characterization of extremality.

Using our extremality criterion, we exhibit new contextual simplicial distributions $p:X\to \Delta_{\ZZ_2}$ when $X$ is given by a specific triangulation of the $n$-dimensional sphere $S^n$. Our first result concerns the case $X=\partial\Delta^n$, the boundary of the standard $n$-simplex (Proposition~\ref{pro:vertboundrysimpled}), and the second result concerns the case $X=B(n,2)$, the $n$-fold join of $S^0$ (Proposition~\ref{pro:vertex(n,2,2)}).  
The first scenario plays a key role in the famous theorem of Vorob'ev \cite{vorob1962consistent}, which characterizes acyclic simplicial complexes as those that do not admit a contextual empirical model.
 The second result models Bell scenarios in quantum foundations \cite{brunner2014bell}, which are fundamental examples of where quantum contextuality arises.
}

{
Our paper is organized as follows. In Section~\ref{sec:Preliminaries}, we introduce the necessary preliminaries on simplicial distributions and empirical models, and show how the former theory embeds into the latter. We also discuss the relative Grothendieck construction in relation to the naturality properties of these theories. Section~\ref{sec:Possibilistic} focuses on the identification of possibilistic collapses of simplicial distributions as sub-bundle scenarios, together with their empirical-model counterpart. In Section~\ref{sec:Sufficient conditions for extremality}, we introduce our topological and categorical criteria, which imply extremality of simplicial distributions and empirical models, respectively. Finally, in Section~\ref{sec:Examples}, we discuss two triangulations of $n$-spheres as base spaces, together with other important examples from the quantum foundations literature. We also provide examples comparing our topological and categorical criteria.
}

\paragraph{Acknowledgments.}
This work is supported by the Air Force Office of Scientific Research (AFOSR) under award number  FA9550-24-1-0257.

\section{Preliminaries}
\label{sec:Preliminaries}

{In this section, we review two main approaches to the study of joint probabilities and contextuality:
\begin{enumerate}
\item A simplicial-set approach based on simplicial distributions on simplicial maps~\cite{okay2022simplicial,barbosa2023bundle}.
\item A sheaf-theoretic approach based on empirical models on presheaves~\cite{abramsky2011sheaf,kharoof2025simplicial}.
\end{enumerate}
We show how the second approach embeds into the first. In addition, we describe the functoriality of these constructions.}

\subsection{Simplicial distributions}

Simplicial distributions, introduced in \cite{okay2022simplicial,kharoof2022simplicial}, combine simplicial sets from algebraic topology with probability distributions. We begin by introducing distributions \cite{jacobs2010convexity}.

\begin{defn}\label{def:DDDDD}
Let $R$ be a semiring.
The \emph{distribution functor} 
%is {the} functor 
$D_{R}\colon  \catSet \to \catSet$ is defined as follows:
\begin{itemize}
    \item For a set $X$, the set 
    %$D_{R}(X)$ 
    of distributions on $X$ is defined by
    \[
   D_{R}(X)=\set{P\colon X \to R \mid~ |\set{x\in X\mid~P(x)\neq 0}|<\infty \;\; \text{and}\;\; \sum_{x\in X}P(x)=1}.
    \]
    \item For a map $f\colon X \to Y$, the map $D_{R}(f)\colon D_{R}(X) \to D_{R}(Y)$ is given by
    \[
    D_{R}(f)(P)(y) = \sum_{x \in f^{-1}(y)} P(x).
    \]
\end{itemize}
\end{defn}

%\coc{$\delta^x$ conflicts with the $P^x$}

The distribution functor is in fact a monad.
The unit of this monad, $\delta_X\colon X \xhookrightarrow{} D_{R}(X)$, sends each $x \in X$ 
to the \emph{delta distribution} $\delta_x$, defined by
\begin{equation}\label{eq:deltax}
\delta_x(x') =
\begin{cases}
1, & x' = x,\\
0, & \text{otherwise.}
\end{cases}
\end{equation}
The algebras over the distribution monad, called \emph{$R$-convex sets}, generalize the usual theory of convexity to arbitrary semirings {(see \cite[Theorem 4]{jacobs2010convexity}).}

When {$R$ is the semiring $\RR_{\geq 0}$ of non-negative real numbers}, we write $D$ instead of $D_{\RR_{\geq 0}}$ and refer to it as the \emph{probability distribution monad}. The other semiring of interest is the Boolean semiring $\BB={0,1}$, which gives rise to the \emph{possibilistic distribution monad}.

There is a map
\begin{equation}\label{eq:piX}
\kappa_X\colon D(X) \to D_{\BB}(X)
\end{equation}
induced by the semiring homomorphism $\RR_{\geq 0} \to \BB$ that sends $0$ to $0$ and every non-zero real number to $1$.

Next, we introduce simplicial sets \cite{friedman2008elementary,goerss2009simplicial}.

\begin{defn}
Let $\Delta$ denote the \emph{simplex category}, whose objects are finite ordered sets
\[
[n] = \{0 < 1 < \dots < n\}, \quad n \ge 0,
\]
and whose morphisms are order-preserving maps, called \emph{ordinal maps}.
A \emph{simplicial set} is a functor
\[
X \colon \Delta^{\op} \to \catSet,
\]
from the opposite of the simplex category.

For each $n \ge 0$, the set $X_n := X([n])$ is called the set of \emph{$n$-simplices} of $X$.  
The images of the coface maps $d^i$ and codegeneracy maps $s^j$ in $\Delta$ under $X$ are called the \emph{face maps} and \emph{degeneracy maps}, respectively:
\[
d_i {:=X(d^{i})} \colon X_n \to X_{n-1}, \qquad s_j {:=X(s^{j})} \colon X_n \to X_{n+1},
\]
and satisfy the standard simplicial identities.
A simplex $\sigma \in X_n$ is called a \emph{generator} if it is not in the image of any face or degeneracy map.  

A \emph{simplicial set map} (or \emph{simplicial map}) between simplicial sets
$$
f \colon X \to Y
$$
is a natural transformation between the corresponding functors $X, Y \colon \Delta^{\op} \to \catSet$.  
Equivalently, it consists of a family of functions
$
\set{f_n \colon X_n \to Y_n}_{n \geq 0}
$
that commute with all face and degeneracy maps of $X$ and $Y$. 
For $x\in X_n$, we will usually write $f_x$ to denote the value of $f_n(x)$.

Simplicial sets with simplicial maps can be assembled into a category denoted by $\catsSet$.
\end{defn}

The distribution monad $D_R$ lifts to a monad on the category of simplicial sets \cite{kharoof2022simplicial}. For a simplicial set $X$, we write $D_R(X)$ for the functor obtained by composing $X\colon \catDelta^\op \to \catSet$ with the distribution monad $D_R\colon \catSet \to \catSet$.

\begin{defn}\label{def:simp-dist}
Let $f \colon E \to X$ be a simplicial map.
An \emph{$R$-simplicial distribution} on $f$ 
is a simplicial set map $p \colon X \to D_{R}(E)$ that makes the following diagram commute:
\begin{equation}\label{dia:simp-dist-revisited}
\begin{tikzcd}[column sep=huge,row sep=large]
& D_{R}(E) \arrow[d,"{D_{R}(f)}"] \\
X \arrow[ur,"p"] \arrow[r,"\delta_X"'] & D_{R}(X)
\end{tikzcd}
\end{equation}
We will write $\on{sDist}_R(f)$ for the set of simplicial distributions on $f$.
\end{defn}

%\coc{introduce $\on{sDist}$}

%
%

We will usually assume that $f$ is surjective to guarantee that a simplicial distribution exists on it. For simplicity of notation, we write $\on{sDist}(f)$ for $\on{sDist}_{\RR_{\geq 0}}(f)$. We refer to the elements of $\on{sDist}_{\RR_{\geq 0}}(f)$ as \emph{probabilistic simplicial distributions}, and to the elements of $\on{sDist}_{\BB}(f)$ as \emph{possibilistic simplicial distributions}.
A useful characterization of simplicial distributions is the following basic observation, see \cite[Proposition 4.9]{barbosa2023bundle}.

\begin{lem} 
\label{lem:charcofsimpdist}
A simplicial map $p \colon X \to D_R(E)$ is an $R$-simplicial distribution if and only if for every $x \in X_n$ we have:
$$
\set{e\in E_n \mid ~ p_x(e)\neq 0} \subset f_n^{-1}(x) .
$$
\end{lem}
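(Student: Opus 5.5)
We prove the two implications separately, arguing one simplex at a time. The key observation is that the commutativity of the triangle in Definition~\ref{def:simp-dist} is an equality of simplicial maps $X\to D_R(X)$. Such an equality holds if and only if it holds at every $x\in X_n$ for every $n$. Thus $p$ is an $R$-simplicial distribution precisely when $D_R(f)(p_x)=\delta_x$ for all simplices $x$. Unwinding Definition~\ref{def:DDDDD}, this says that for every $y\in X_n$
\[
\sum_{e\in f_n^{-1}(y)} p_x(e) = \delta_x(y),
\]
that is, the $p_x$-mass of the fiber over $y$ is $1$ if $y=x$ and $0$ otherwise.

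For the forward direction, suppose $p$ is a simplicial distribution, and let $e\in E_n$ with $p_x(e)\neq 0$. Put $y=f_n(e)$. If $y\neq x$, then the sum $\sum_{e'\in f_n^{-1}(y)}p_x(e')$ would have to vanish. This sum contains the nonzero term $p_x(e)$, so we need some argument that it cannot vanish. This is the one delicate point. Over a general semiring, a sum containing a nonzero term could in principle be zero, so we must use a positivity (zerosumfree) property of the semiring. For the semirings of interest, $\RR_{\geq 0}$ and $\BB$, a finite sum is zero only if every term is zero. I would state this assumption explicitly, as the paper implicitly does; it is the main obstacle to an unconditional proof. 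Under this assumption we conclude $y=x$, i.e.\ $e\in f_n^{-1}(x)$.

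For the converse, suppose the support of each $p_x$ lies in $f_n^{-1}(x)$. For $y\neq x$, every term of $\sum_{e\in f_n^{-1}(y)}p_x(e)$ is zero, because the fibers over $x$ and $y$ are disjoint. For $y=x$, the sum over $f_n^{-1}(x)$ contains the entire support of $p_x$. It therefore equals $\sum_{e\in E_n}p_x(e)=1$, since $p_x\in D_R(E_n)$. Hence $D_R(f)(p_x)=\delta_x$ for every simplex $x$, so the triangle commutes and $p$ is an $R$-simplicial distribution. No compatibility with face and degeneracy maps needs to be checked, because $p$ is assumed to be simplicial from the start.
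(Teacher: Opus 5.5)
Your proof is correct and is the standard componentwise unwinding of $D_R(f)\circ p=\delta_X$. The paper gives no argument of its own here; it calls the lemma a basic observation and cites \cite[Proposition~4.9]{barbosa2023bundle}.

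Your caveat is not a gap in your proof but a genuine correction to the statement, which is written for an arbitrary semiring. The ``if'' direction holds over every $R$. The ``only if'' direction fails whenever $R$ is not zero-sum-free. For instance, over $\ZZ$ take $X=\Delta[0]\sqcup\Delta[0]$ with vertices $a,b$, fibres $\{e_a\}$ over $a$ and $\{e_b,e_b'\}$ over $b$, and set $p_a=\delta_{e_a}+\delta_{e_b}-\delta_{e_b'}$ and $p_b=\delta_{e_b}$. Then $D_{\ZZ}(f)\circ p=\delta_X$, yet $e_b\in\on{supp}(p_a)$. So the zero-sum-free hypothesis you impose, which $\RR_{\geq 0}$ and $\BB$ satisfy, is exactly what the statement needs.
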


\begin{cor}\label{cor:gentogen}
Let $p$ be a simplicial distribution on 
$f\colon E \to X$.  
If $\sigma \in X_n$ is a generator simplex  
of $X$,
then
$$
p_{\sigma}(d_i(e_1)) = 0
\quad \text{and} \quad
p_{\sigma}(s_j(e_2)) = 0
$$
for all $e_1 \in E_{n+1}$ and $e_2 \in E_{n-1}$.
\end{cor}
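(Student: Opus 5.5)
By Lemma~\ref{lem:charcofsimpdist}, the support of $p_\sigma$ lies in the fiber $f_n^{-1}(\sigma)$. So it is enough to show that no element of the form $d_i(e_1)$ with $e_1\in E_{n+1}$, and no element of the form $s_j(e_2)$ with $e_2\in E_{n-1}$, lies in $f_n^{-1}(\sigma)$. Once this is established, $p_\sigma$ vanishes on all such elements.

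For the face case, suppose $e=d_i(e_1)$ with $e_1\in E_{n+1}$, and that $f_n(e)=\sigma$. Since $f$ is a simplicial map, it commutes with face maps. Hence
\[
\sigma=f_n(d_i(e_1))=d_i(f_{n+1}(e_1)),
\]
so $\sigma$ is a face of the $(n+1)$-simplex $f_{n+1}(e_1)\in X_{n+1}$. This contradicts the assumption that $\sigma$ is a generator, so $d_i(e_1)\notin f_n^{-1}(\sigma)$.

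For the degeneracy case, suppose $e=s_j(e_2)$ with $e_2\in E_{n-1}$, and that $f_n(e)=\sigma$. Since $f$ also commutes with degeneracy maps,
\[
\sigma=f_n(s_j(e_2))=s_j(f_{n-1}(e_2)),
\]
so $\sigma$ is degenerate, again contradicting that $\sigma$ is a generator. Therefore $p_\sigma(s_j(e_2))=0$.

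The argument contains no real obstacle. The only points to check are that $f$ commutes with both face and degeneracy maps, which holds by naturality, and that the definition of generator excludes $\sigma$ from the images of face and degeneracy maps of $X$, which is exactly what is used above.
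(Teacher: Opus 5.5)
Your proof is correct and follows essentially the same approach as the paper. The paper also uses Lemma~\ref{lem:charcofsimpdist} to place the support of $p_\sigma$ in $f_n^{-1}(\sigma)$, and then uses the fact that $f$ commutes with face and degeneracy maps to contradict the assumption that $\sigma$ is a generator.
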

\begin{proof}
Suppose $p_{\sigma}(d_i(e_1)) \neq 0$ for some $e_1 \in E_{n+1}$.  
By Lemma~\ref{lem:charcofsimpdist}, this implies
$f_n(d_i(e_1)) = \sigma$, hence 
$
d_i(f_{n+1}(e_1)) = \sigma
$. This contradicts the assumption that $\sigma$ is a generator simplex. Therefore,
$p_{\sigma}(d_i(e_1)) = 0$. A similar argument applies to degeneracies.
\end{proof}

Given  
$f \colon E \to X$, let $\on{sSect}(f)$ denote the set of sections of $f$. To a section $s\colon X\to E$, we associate a simplicial distribution $\delta_s \colon X\to D(E)$ defined by $(\delta_s)_x=\delta_{s_x}$, the delta function concentrated at the simplex $s_x$. Simplicial distributions of the form $\delta_s$, for a section $s$, are called \emph{deterministic distributions}.
We have a commutative diagram
\begin{equation}\label{eq:Theta_f}
\begin{tikzcd}[column sep=huge,row sep=large]
D_R(\on{sSect}(f)) \arrow[r,"\Theta_f"] & \on{sDist}_R(f) \\
\on{sSect}(f) \arrow[u,hook,"\delta_{\on{sSect}(f)}"] \arrow[ru,hook,"s\mapsto \delta_s"']
\end{tikzcd}
\end{equation} 
In other words, $\Theta_f$ is the unique $R$-convex extension of the assignment of deterministic distributions to sections.
For details, see 
\cite{barbosa2023bundle}.

\begin{defn}\label{def:conSimp} 
An $R$-simplicial distribution $p$ on $f\colon E\to X$ is called \emph{$R$-contextual} if it does not lie in the image of $\Theta_f$.  
Otherwise, it is called \emph{$R$-noncontextual}. 
When $R = \RR_{\geq 0}$, we  
simply say \emph{(non-)contextual}. 
\end{defn}

\begin{ex}\label{ex:first examples}
The canonical examples considered in this paper are simplicial distributions on a projection map
\[
X\times Y \to X.
\]
Unraveling the definition shows that such simplicial distributions are equivalently described by simplicial maps of the form
\[
p\colon X\to D_R(Y).
\]
{We will denote the set of simplicial distributions in this case by $\sDist_R(X,Y)$.}

Many important examples arise when $Y$ is a specific simplicial set. Let $\Delta_{\ZZ_m}$ denote the simplicial set whose $n$-simplices are given by
\[
(\Delta_{\ZZ_m})_n = \{ (a_0,a_1,\dots,a_n) \in \ZZ_m^{n+1} \}.
\]
The face maps are defined by omitting a coordinate,
\[
d_i(a_0,a_1,\dots,a_n) = (a_0,a_1,\dots,\widehat{a_i}, \dots,a_n),
\]
and the degeneracy maps are defined by duplicating a coordinate,
\[
s_i(a_0,a_1,\dots,a_n) = (a_0,\dots,a_i,a_i,\dots, a_n).
\]
We denote the corresponding projection map by
\[
f_{X,m}\colon X\times \Delta_{\ZZ_m} \to X.
\]
In this setting, there are many interesting contextual simplicial distributions. One fundamental class of examples is the following:

Let ${C^{(n)}}$ denote the simplicial set with vertices $\{0,1,\dots,n-1\}$ and generator simplices given by the edges $i\to i+1$ for $i=0,\dots,n-2$ and $0\to n-1$. This is a one-dimensional simplicial set corresponding to a cycle graph with $n$ edges.  
All extremal contextual simplicial distributions
\[
p\colon {C^{(n)}}\to D(\Delta_{\ZZ_m})
\]
on this class of examples are classified in \cite{kharoof2024extremal}. {For details, see Example \ref{ex:circle}}. 
\end{ex}

\subsection{Distributions on presheaves}

In the quantum foundations literature, simplicial distributions often arise from presheaves of distributions. More specifically, one considers distributions indexed by the simplices of a simplicial complex. Such distributions can be realized as simplicial distributions.

Let $\catsComp$ denote the \emph{category of simplicial complexes}:
\begin{itemize}
    \item Objects are finite abstract simplicial complexes $(V,\Sigma)$, where $V$ is a finite vertex set and $\Sigma \subset \mathcal{P}(V)$ is a family of nonempty subsets closed under inclusion; that is, if 
    $\sigma \in \Sigma$ and $\tau \subset \sigma$, then $\tau \in \Sigma$. Usually, we write simply $\Sigma$ instead of $(V,\Sigma)$, and denote its vertex set by $V(\Sigma)$. 
    
    \item Morphisms are simplicial maps $f\colon \Sigma_1 \to \Sigma_2$, that is, functions $f\colon V(\Sigma_1) \to V(\Sigma_2)$ such that, for every simplex $\sigma \in \Sigma_1$, we have
    \[
        f(\sigma) = \{ f(v) \mid ~ v \in \sigma \} \in \Sigma_2.
    \]
\end{itemize}
We will write $\catC_{\Sigma}$ for the \emph{poset category} of the simplicial complex $\Sigma$, whose objects are the simplices of $\Sigma$ and whose morphisms are simplex inclusions. 

\begin{defn}\label{def:empirical model on F}
{Let $\Sigma$ be a simplicial complex.} We write $\on{PSh}(\Sigma)$ for the set of presheaves $F\colon \catC_\Sigma^\op \to \catSet$ on {the} poset category {$\catC_{\Sigma}$} {(or simply we say \emph{presheaves on $\Sigma$})}.
For a presheaf $F\in \on{PSh}(\Sigma)$, we write 
\[
{\on{Emp}_R(F)}= \lim\, {(}D_R\circ F{)}
\]
and refer to an element of this inverse limit as an \emph{{$R$-}empirical model} on $F$. 
\end{defn}

We say that \(F\) is \emph{non-trivial} if, for every \(\sigma\in \Sigma\), the set \(F(\sigma)\) is non-empty. We will usually assume that $F$ is non-trivial, so that $\on{Emp}_R(F)\neq \emptyset$. 
For simplicity of notation, we write $\on{Emp}(F)$ for $\on{Emp}_{\RR_{\geq 0}}(F)$. We call the elements of $\on{Emp}(F)$ \emph{probabilistic empirical models}, and the elements of $\on{Emp}_{\BB}(F)$ \emph{possibilistic empirical models}.
 
\begin{rem}\label{rem:restofdist}
For a 
%pre-event scenario 
presheaf
$F \colon \catC^\op_{\Sigma} \to \catSet$, an element of
%$p \in \Emp_{R}(F)$ 
$\on{Emp}_R(F)$
is a 
%collection
family
of 
distributions $\set{p_{\sigma}}_{\sigma \in \Sigma}$ such that 
\begin{itemize}
    \item For every $\sigma \in \Sigma$, we have $p_{\sigma}\in D_{R}(F(\sigma))$.
    \item For every 
    %morphism 
    inclusion $i \colon \sigma \to \tau$ of simplices, we have $D_{R}(F(i))(p_{\tau})=p_{\sigma}$.
\end{itemize}
We usually write $p_{\tau}|_{\sigma}=D_{R}(F(i))(p_{\tau})$.   
\end{rem}

\begin{defn}\label{def:conEmp}
For $F\in \on{PSh}(\Sigma)$, we define  
$$
\Theta_{F}\colon D_R(\lim F) \to \lim {(D_R \circ F)}=\on{Emp}_R(F) 
$$
{to be the canonical map induced by the universal property of the limit.}
An $R$-{empirical model}
%empirical {model} 
$p$ on $F$ is called \emph{$R$-contextual} if it does not lie in the image of $\Theta_F$. Otherwise, it is called \emph{$R$-noncontextual}. %
%In the case 
When $R=\mathbb{R}_{\ge 0}$, we 
%will 
simply say \emph{(non-)contextual}
% and \emph{noncontextual} 
instead of 
$\mathbb{R}_{\ge 0}$-(non-)contextual.
% and $\mathbb{R}_{\ge 0}$-noncontextual.
\end{defn}

\begin{ex}\label{ex:eventpreshef}
Our definition generalizes a well-known construction from quantum foundations.
Let $T=(\Sigma,O)$, where $\Sigma$ is a simplicial complex and 
$O=\{O_x\}_{x\in V(\Sigma)}$ is a family of sets. {We write $T=(\Sigma,\ZZ_m)$ when $O_x=\ZZ_m$ for every $x\in V(\Sigma)$.}
The \emph{presheaf of events}
\[
\eE_T \colon \catC_\Sigma^{\op} \to \catSet
\]
is defined by sending each simplex $\sigma$ to
$\prod_{x\in \sigma} O_x$.
The notion of contextuality defined above generalizes the one introduced in \cite{abramsky2011sheaf} to study quantum contextuality.
\end{ex}

\begin{ex}
An interesting class of examples of contextual {empirical models} is obtained from the boundary $\partial \Delta^n$ of the standard $n$-simplex 
{(Definition \ref{def:standardsimpl})}. Let 
${\eE}_T\in \on{PSh}(\partial \Delta^n)$, where {$T=(\partial \Delta^n, \ZZ_m)$.}
The existence of contextual empirical models {in $\on{Emp}(\eE_T)$} plays a key role in Vorob'ev's theorem \cite{vorob1962consistent}, which characterizes acyclic simplicial complexes as precisely those simplicial complexes that do not admit contextual empirical models 
{(see Proposition \ref{pro:vertboundrysimpled})}. {We will return to this example in Section~\ref{sec:Combinatorial sphere}.}
\end{ex}

\subsubsection{Singular realization}

{In this section, we explain how empirical models can be realized as simplicial distributions.}

\begin{defn}\label{def:singular realization}
Let $\catComp_{\geq}$ denote the category of finite simplicial complexes equipped with a total order on their vertex sets, with order-preserving simplicial maps as morphisms. We define the \emph{singular realization functor}
\[
S\colon \catComp_{\geq}\to \catsSet
\]
as follows.
For an ordered simplicial complex $\Sigma$, let $S(\Sigma)$ be the simplicial set whose set of $n$-simplices is
\[
S(\Sigma)_n
:=
\left\{
(v_0,\dots,v_n)\in V(\Sigma)^{n+1}
\mid ~
v_0\leq \cdots \leq v_n
\text{ and }
\{v_0,\dots,v_n\}\in \Sigma
\right\}.
\]
Thus, an $n$-simplex of $S(\Sigma)$ is an ordered simplex of $\Sigma$, with repetitions of vertices allowed.
The face maps are defined by deleting vertices:
\[
d_i(v_0,\dots,v_n)
=
(v_0,\dots,\widehat{v_i},\dots,v_n),
\qquad 0\leq i\leq n,
\]
%where $\widehat{v_i}$ indicates that $v_i$ is omitted.
and the degeneracy maps are defined by repeating vertices:
\[
s_i(v_0,\dots,v_n)
=
(v_0,\dots,v_i,v_i,\dots,v_n),
\qquad 0\leq i\leq n.
\]
These maps satisfy the simplicial identities, so $S(\Sigma)$ is a simplicial set.
For an order-preserving simplicial map $\varphi\colon \Sigma\to\Gamma$, the simplicial map
\[
S(\varphi)\colon S(\Sigma)\to S(\Gamma)
\]
is defined degreewise by
\[
S(\varphi)_n(v_0,\dots,v_n)
=
(\varphi(v_0),\dots,\varphi(v_n)).
\] 
\end{defn}

\begin{defn}\label{def:realization of F}
Let $\Sigma$ be an ordered simplicial complex and let 
$F\in \on{PSh}(\Sigma)$. We define a simplicial set $E(F)$ over $S(\Sigma)$ as follows.
For each $n\geq 0$, set
\[
E(F)_n
:=
\bigsqcup_{(v_0,\dots,v_n)\in S(\Sigma)_n}
F(\{v_0,\dots,v_n\}).
\]
Thus, an $n$-simplex of $E(F)$ is a pair
\[
((v_0,\dots,v_n),a),
\qquad
a\in F(\{v_0,\dots,v_n\}).
\]
The face maps are defined as follows. For $(v_0,\dots,v_n)\in S(\Sigma)_n$ and $0\leq i\leq n$, the inclusion
\begin{equation}\label{eq:iotainclusion}
\iota_i\colon \{v_0,\dots,\widehat{v_i},\dots,v_n\}
\hookrightarrow
\{v_0,\dots,v_n\}
\end{equation}
induces, by contravariance of $F$, a map
\[
F(\{v_0,\dots,v_n\})
\to
F(\{v_0,\dots,\widehat{v_i},\dots,v_n\}).
\]
We define
\[
d_i^{E(F)}((v_0,\dots,v_n),a)
=
\bigl((v_0,\dots,\widehat{v_i},\dots,v_n),
F(\iota_i)(a)\bigr).
\]
The degeneracy maps are defined by
\[
s_i^{E(F)}((v_0,\dots,v_n),a)
=
((v_0,\dots,v_i,v_i,\dots,v_n),a).
\] 
Finally, {the} projection onto the indexing simplex defines a simplicial map
\[
f_F\colon E(F)\to S(\Sigma),
\qquad
(f_F)_n((v_0,\dots,v_n),a)=(v_0,\dots,v_n).
\]
\end{defn}

%Since $F$ is nontrivial, we get that $f_F$ is surjective.  

\begin{ex}
Let $F=\eE_T$ denote the presheaf of events for $T=(\Sigma,\ZZ_m)$ (see Example~\ref{ex:eventpreshef}). 
Note that
$$
\begin{aligned}
E(F)_n 
&= \bigsqcup_{(v_0,\dots,v_n)\in S(\Sigma)_n} F(\{v_0,\dots,v_n\}) \\
&= \bigsqcup_{(v_0,\dots,v_n)\in S(\Sigma)_n} \;\prod_{x\in \{v_0,\dots,v_n\}} \ZZ_m \\
&\cong \bigsqcup_{(v_0,\dots,v_n)\in S(\Sigma)_n} \;\prod_{x\in \{v_0,\dots,v_n\}} \bigl(\{x\} \times \ZZ_m\bigr).
\end{aligned}
$$
On the other hand, we have
$$
\begin{aligned}
(S(\Sigma) \times \Delta_{\ZZ_m})_n 
= S(\Sigma)_n \times \prod_{i=0}^n \ZZ_m 
\cong \bigsqcup_{(v_0,\dots,v_n)\in S(\Sigma)_n} \;\prod_{i=0}^n \bigl(\{v_i\} \times \ZZ_m\bigr).
\end{aligned}
$$
Therefore, there is an inclusion of simplicial sets over $S(\Sigma)$:
$$
\begin{tikzcd}[column sep=huge,row sep=large]
E(F) \arrow[dr,"f_F"'] \arrow[rr,hook,"i"] && S(\Sigma) \times \Delta_{\ZZ_m} \arrow[dl,"f_{S(\Sigma),m}"] \\
& S(\Sigma) &
\end{tikzcd}
$$
(see Example~\ref{ex:first examples}).
Note that $i_n$ behaves like the identity on the fibers over non-degenerate simplices of $S(\Sigma)$. Consequently, it induces an affine isomorphism
$$
i_{\ast} \colon \sDist(f_F) \to \sDist(f_{S(\Sigma),m}).
$$
\end{ex}
{The construction in Definition \ref{def:realization of F} provides a functor
\begin{equation}\label{eq:functorXi}
\Xi_{\Sigma} \colon  \catPSh_{/\Sigma} \to \catsSet_{/S(\Sigma)},
\end{equation}
where for a presheaf {$F$}, we set $\Xi_{\Sigma} (F)=f_F$ and for a natural transformation $\alpha \colon F \to G$, 
the simplicial map $\Xi_{\Sigma} (\alpha)\colon E(F) \to E(G)$ {is defined} as follows: 
$$
\Xi_{\Sigma} (\alpha)_n\left((v_0,\dots,v_n),a) \right)=\left((v_0,\dots,v_n),\alpha_{\set{v_0,\dots,v_n}}(a)\right).
$$
}

\begin{pro}\label{pro:isomorEventBundle}
Let $F\in \on{PSh}(\Sigma)$, and let 
$f_F \colon E(F) \to S(\Sigma)$ be the associated simplicial map. 
Define a map
$$
\Phi_F {:=\Phi_{R,F}}\colon \on{Emp}_R(F) \to \on{sDist}_R(f_F)
$$
by sending an empirical model $p = \{p_\sigma\}_{\sigma \in \Sigma}$ on $F$ to the simplicial distribution $\Phi_F(p)$ 
given as follows: for every $x=(v_0,\dots,v_n) \in S(\Sigma)_n$ and every 
$e \in E(F)_n$, set
\begin{equation}\label{eq:PhiFFF}
\Phi_F(p)_{x}(e) = 
\begin{cases}
p_{\{v_0,\dots,v_n\}}(a)  & \text{if } e=(x,a) \text{ with } a \in F(\{v_0,\dots,v_n\}), \\
0  & \text{otherwise.}
\end{cases}
\end{equation}
Then $\Phi_F$ is an affine isomorphism.
\end{pro}
\begin{proof}
{First, we prove that $\Phi_F(p) \colon S(\Sigma) \to D_R(E(F))$ is a simplicial map. Let $x=(v_0,\dots,v_n) \in S(\Sigma)_n$ and 
let $e=(y,a) \in E(F)_{n-1}$. Then
$$
\phi_F(p)_{d_i(x)}(e)= \begin{cases}
p_{\set{v_0, \dots, \widehat{v_i}, \dots, v_n}}(a)  & \text{if } y=d_i(x) \text{ and } a \in F(\set{v_0, \dots, \widehat{v_i}, \dots, v_n})  \\
0  & \text{otherwise.}
\end{cases}
$$
On the other hand, consider the inclusion $\iota_i$ in (\ref{eq:iotainclusion}). 
If $d_i(x)=y$, 
then  
$$
\begin{aligned}
D_R(d_i)\bigl(\Phi_F(p)_{x}\bigr)(e)
&=
\sum_{e' :\, d_i(e')=e}
\Phi_F(p)_{(v_0,\dots,v_n)}(e') \\
&=
\sum_{\substack{
e'=(x,a') \\
a' \in F(\{v_0,\dots,v_n\}) \\
d_i(e')=e
%F(\iota_i)(e') = e
}}
p_{\{v_0,\dots,v_n\}}(a') \\
&=
\sum_{\substack{
a' \in F(\{v_0,\dots,v_n\}) \\
F(\iota_i)(a')=a
}}
p_{\{v_0,\dots,v_n\}}(a')  \\
&=
p_{\{v_0,\dots,v_n\}}
|_{\{v_0,\dots,\widehat{v_i},\dots,v_n\}}(a) \\
&=\begin{cases} 
p_{\{v_0,\dots,\widehat{v_i},\dots,v_n\}}(a) & \text{if }  a \in F(\set{v_0, \dots, \widehat{v_i}, \dots, v_n})  \\
0  & \text{otherwise.}
\end{cases}
\end{aligned}
$$
If 
$y \neq d_i(x)$,
then there is no 
$e' \in E(F)_n$
such that
$d_i(e')=e$,
and hence the above sum is zero. Therefore,
$$
D_R(d_i)\bigl(\Phi_F(p)_{x}\bigr)
=
\Phi_F(p)_{d_i(x)}.
$$

Now, let $x=(v_0,\dots,v_n) \in S(\Sigma)_n$ and 
let 
$e=(y,a) \in E(F)_{n+1}$. Then
$$
\Phi_F(p)_{s_i(x)}(e)
=
\Phi_F(p)_{(v_0,\dots,v_i,v_i,\dots,v_n)}(e)
=
\begin{cases}
p_{\{v_0,\dots,v_n\}}(a)
& \text{if } y=s_i(x) \text{ and } a \in F(\{v_0,\dots,v_n\}), \\
0 & \text{otherwise.}
\end{cases}
$$
On the other hand,
$$
\begin{aligned}
D_R(s_i)\bigl(\Phi_F(p)_{x}\bigr)(e)
&=
\sum_{e' :\, s_i(e')=e}
\Phi_F(p)_{(v_0,\dots,v_n)}(e') \\
&=
\begin{cases}
p_{\{v_0,\dots,v_n\}}(a)
& \text{if } y=s_i(x) \text{ and } a \in F(\{v_0,\dots,v_n\}), \\
0 & \text{otherwise.}
\end{cases}
\end{aligned}
$$
Hence,
$$
D_R(s_i)\bigl(\Phi_F(p)_{x}\bigr)
=
\Phi_F(p)_{s_i(x)}.
$$
Therefore, $\Phi_F(p)$ is a simplicial map. 

Moreover, by definition,
$$
\{e \in E(F)_n \mid~
\Phi_F(p)_{(v_0,\dots,v_n)}(e)\neq 0\}
\subseteq
\set{(v_0,\dots,v_n)}\times F(\{v_0,\dots,v_n\})
=
(f_{F})_n^{-1}(v_0,\dots,v_n).
$$
Hence, by Lemma~\ref{lem:charcofsimpdist},
$\Phi_F(p) \in \sDist_R(f_F)$.

The map $\Phi_F$ is clearly affine.
Now let $q \in \sDist_R(f_F)$.
For $x \in S(\Sigma)_n$ and $e \in E(F)_n$, we have
$$
q_{s_i(x)}(s_i(e))
=
D_R(s_i)(q_x)(s_i(e))
=
\sum_{e':\; s_i(e')=s_i(e)} q_x(e')
=
q_x(e),
$$
since $s_i$ is injective. 
Therefore, we can define a map
$$
\Psi_F \colon \sDist_R(f_F) \longrightarrow \Emp_R(F)
$$
by
$$
\Psi_F(q)_{\{v_0,\dots,v_n\}}
:=
q_{(v_0,\dots,v_n)},
$$
where $v_0 \leq \dots \leq v_n$.

One verifies directly that
$$
\Psi_F\circ \Phi_F=\Id_{\Emp_R(F)}
\qquad\text{and}\qquad
\Phi_F\circ \Psi_F=\Id_{\sDist_R(f_F)}.
$$

Therefore, $\Phi_F$ is an affine isomorphism.
}
\end{proof}

\subsection{Naturality}\label{subsec:nattt}
{In this section, we show that both simplicial distributions and empirical models assemble into functors on suitable slice categories, and that the embedding of the theory of empirical models into the theory of simplicial distributions is compatible with these functorial structures.}
\begin{defn}\label{def:catpsipBund}
For a simplicial set $X$, 
%we define the category $\sBund(X)$ to be:
consider the over category $\catsSet_{/X}$:
\begin{itemize}  
    \item objects are simplicial 
    %pre-bundle scenarios (see Definition \ref{def:simplicial-DV-LS-BS}). 
    maps $f\colon E\to X$,
    \item a morphism from $f\colon E \to X$ to $g\colon F \to X$ is a simplicial map $\alpha\colon E \to F$ 
    that makes the following diagram commute:
\begin{equation}\label{eq:alphaftog}
\begin{tikzcd}[column sep=huge,row sep=large]
E \arrow[dr,"f"'] \arrow[rr,"\alpha"] && F\arrow[dl,"g"] \\
& X &
\end{tikzcd}
\end{equation}
\end{itemize}
%
%In other words, $\sBund(X)$ is the subcategory of the slice category $\catsSet/X$, where the objects are the 
%simplicial pre-bundle scenarios.
\end{defn}

\begin{defn}\label{def:piast}
For a simplicial map $\pi\colon Y\to X$, we define the functor 
\[
\pi^\ast \colon \catsSet_{/X}  \to \catsSet_{/Y}
\] 
by 
\begin{itemize}
\item sending an object $f\colon E \to X$ 
%in $\sBund(X)$ 
to the object $\pi^\ast(f)$ 
%that 
defined by the pullback square
$$
      \begin{tikzcd}[column sep=huge,row sep=large]
E \arrow[d,"f"] &\arrow[l,""]  E \times_{X} Y 
\arrow[d,"\pi^{\ast}(f)"]
 \\
X  \arrow[ru, phantom, "\llcorner", very near end]  &\arrow[l,"\pi"]  Y 
\end{tikzcd}
$$
Here, $\pi^{\ast}(f)$ 
is the canonical projection from the pullback.
% to $Y$.
%

\item sending a morphism $\alpha$ as in Diagram~(\ref{eq:alphaftog}) to the natural morphism
\[
\pi^{\ast}(\alpha)=\alpha \times \Id_Y\colon E \times_X Y \to F\times_X Y.
\] 
\end{itemize}
This construction gives a functor
\[
\catsSet_{/-}\colon \catsSet^\op \to \catCat,
\]
{where $\catCat$ is the category of small categories.}
\end{defn}

For a simplicial set $X$, we define a functor 
\[
\sDist_{R,X}\colon \catsSet_{/X} \to \catSet
\]
that sends a {map}
%pre-bundle scenario 
$f\colon  E\to X$ to the set of simplicial distributions on $f$.
%, which we denote by 
%$\sDist_{R}(X)(f)$. 
Note that we carry the base simplicial set in the notation and write $\on{sDist}_{R,X}(f)$ for this set.
Given a morphism $\alpha \colon f \to g$ in $\catsSet_{/X}$, the functor $\sDist_{R,X}$ acts by
\[
\alpha_\ast= \sDist_{R,X}(\alpha)\colon \sDist_{R,X}(f) \to \sDist_{R,X}(g), \qquad
p \mapsto D_{R}(\alpha) \circ p.
\]
Furthermore, for a simplicial map $\pi\colon Y \to X$, we obtain a natural transformation {such that the following diagram is a morphism} in the 
{thick} slice category 
$\catCat \slice \catSet$ {(see Definition \ref{def:Sli2cat})}:
\[
\begin{tikzcd}[column sep=huge,row sep=large]
\catsSet_{/X} 
\arrow[rr,"\pi^{\ast}"]
\arrow[ddr,"{\sDist_{R,X}}"',""{name=A,right}] && \catsSet_{/Y}
\arrow[ddl,"{\sDist_{R,Y}}",""{name=B,left}] \\
 &\arrow[Rightarrow, from=A, to=B, "{\pi^*_{-}}"]& \\
&  \catSet&  
\end{tikzcd}.
\]
{Here,} for 
%each 
$f \in \catsSet_{/X}$, the natural map 
$\pi^*_{f}\colon \sDist_{R,X}(f) \to \sDist_{R,Y}(\pi^\ast(f))$ 
sends $p$ to $\pi_{\ast}(p)$ defined by {the formula 
\begin{equation}\label{eq:piastp}
 \pi^{\ast}_f(p)(y)(e,y)=p_n(\pi_n(y))(e)   
\end{equation}
for every $(e,y)\in E_n \times_{X_n} Y_n$.} 
%\ak{I rewrote the explicit formula, since it is needed later. I also removed the formula involving
%\[
%m\colon D_R(E)\times D_R(Y)\to D_R(E\times Y),
%\]
%because the map $m$ was not explained and we don't use it in any other place.}

Thus, we have a functor
\[
\sDist_{R,-}\colon \catsSet^{\op} \to \catCat \slice \catSet.
\]

\begin{defn}\label{def:simplicial distribution functor}
The \emph{simplicial distribution functor} is defined as the relative Grothendieck construction  
\[
\on{sDist}_{R}=\int_{\catsSet^{\op}} \on{sDist}_{R,-} \colon  \int_{\catsSet^\op} \catsSet_{/-} \longrightarrow \catSet.
\]
{See Definition \ref{def:GenGroth}.}  
\end{defn}

We have analogous constructions for the presheaf point of view. For a simplicial complex $\Sigma$, the category $\catPSh_{/\Sigma}$ of presheaves on $\Sigma$ consists of objects given by functors $F\colon \catC^\op_\Sigma\to \catSet$ and morphisms by natural transformations. 
An ordered simplicial complex map $\pi\colon \Gamma\to \Sigma$ induces a functor
$$
\pi^*\colon \catPSh_{/\Sigma} \to \catPSh_{/\Gamma}
$$
by sending $F$ to the composite {$F \circ \bar \pi$}, where $\bar \pi\colon \catC_\Gamma \to \catC_\Sigma$ is the induced map between the poset categories. This gives us a functor
\[
\catPSh_{/-}\colon \catComp_{\geq}^\op \to \catCat.
\] 

{Moreover, we have} a natural transformation from $\catPSh_{/-}$ to $\catsSet_{/{S^{\op}(-)}}$.

The empirical model functor can be assembled into a functor of the form 
\[
\on{Emp}_{R,-}\colon \catComp_{\geq}^{\op} \to \catCat \slice \catSet.
\]
For {simplicial complex $\Sigma$,} the maps $\Phi_F$ of Proposition \ref{pro:isomorEventBundle} assemble into a natural isomorphism
{
\begin{equation}\label{eq:isoemptoxisdis}
\Phi_{\Sigma}\colon \on{Emp}_{R,\Sigma}  \to \on{sDist}_{R,S(\Sigma)}\circ \Xi_{\Sigma},
\end{equation}
see (\ref{eq:functorXi}). This induces the natural isomorphism}
\[
\on{Emp}_{R,-}  \to \on{sDist}_{R,-}\circ S^{\op}.
\]
Then {we obtain the following diagram between the relative Grothendieck functors:}  
$$
\begin{tikzcd}[column sep=huge,row sep=large]
\int_{ \catComp_{\geq}^{\op}}  \catPSh_{/-} 
\arrow[rr,hook,""]
\arrow[ddr,"{\int_{ \catComp_{\geq}^{\op}} \on{Emp}_{R,-} }"',""{name=A,right}] && \int_{ \catsSet^{\op}}  \catsSet_{/-} 
\arrow[ddl,"{\sDist_{R}}",""{name=B,left}] \\
 &\arrow[Rightarrow, from=A, to=B, "\cong"]& \\
&  \catSet&  
\end{tikzcd}.
$$
This way we see that the embedding of the theory of empirical models into the theory of simplicial distributions is natural. 
%\end{rem}

%\begin{rem}\label{rem:relation version}

There are variants of this construction in the simplicial complex case. We can work over the category $\catComp$ of unoriented simplicial complexes. The \emph{unoriented singular realization functor} 
\[
S\colon \catComp \to \catsSet
\] 
is defined analogously to Definition \ref{def:singular realization} with the same face and degeneracy maps except that the ordering assumption is dropped in the definition of the simplices:
\[
S(\Sigma)_n = \{ (v_0,\dots,v_n) \in V(\Sigma)^{n+1} \mid~ \{v_0,\dots,v_n\} \in \Sigma  \}.
\]
Unfortunately with this realization Proposition \ref{pro:isomorEventBundle}  fails. It is still injective but not a bijection. 

In applications, the unoriented category is extended to $\catRel$, the category of simplicial complexes and simplicial relations, see \cite{barbosa2023closing,barbosa2023bundle}. A categorical way to achieve this extension is to consider the \emph{nerve complex} monad
\[
\hat N\colon \catComp \to \catComp
\]
that sends a simplicial complex $\Sigma$ to the simplicial complex $\hat N \Sigma$ whose vertex set is given by $\Sigma${, and} a subset $\{\sigma_1,\sigma_2,\dots,\sigma_k\}$ is a simplex of this complex if $\cup_{i=1}^k\sigma_i\in \Sigma$. The category $\catRel$ is then defined as the Kleisli category $\catComp_{\hat N}$. In this case the embedding of the theory over simplicial simplicial relation to simplicial set setting can be done by considering the \emph{nerve functor}
\[
N\colon \catComp \to \catsSet
\]
sending $\Sigma$ to the simplicial set $N(\Sigma)$ defined by the unoriented singular realization $S(\hat N(\Sigma))$ of the nerve complex. 
The functor $\catPSh_{/-}$ extends to {$\catRel$} since a simplicial relation $\Gamma\to \Sigma$ represented as a simplicial complex map $\pi \colon \Gamma \to \hat N \Sigma$ induces a functor $\bar \pi \colon \catC_\Gamma \to \catC_\Sigma$, 
{which can be used to} pull back presheaves on $\Sigma$ to presheaves on $\Gamma$.
Details of these constructions are given in \cite{barbosa2023bundle,kharoof2025simplicial}.

\begin{defn}\label{def:empirical model functor}
The \emph{empirical model functor} is defined as the relative Grothendieck construction  
\[
\on{Emp}_{R}=\int_{\catRel^{\op}} \on{Emp}_{R,-} \colon  \int_{\catRel^{\op}} \catPSh_{/-} \longrightarrow \catSet.
\] 
\end{defn}

%\end{rem}

\section{Possibilistic simplicial distributions}\label{sec:Possibilistic} 
{In this section, we introduce the functor of sub-bundle scenarios and show that it is equivalent to the functor of possibilistic simplicial distributions. We then establish the analogous result on the sheaf-theoretic side by showing that the functor of possibilistic empirical models is equivalent to the functor of sub-event scenarios.}

\subsection{Bundle scenarios}

The support of a simplicial distribution on a simplicial map can be described by special types of simplicial maps first introduced in \cite[Definition~4.1]{barbosa2023bundle}.

\begin{defn}\label{def:bundle scenario}
Let $f\colon E\to X$ be a simplicial map. We say that $f$ is \emph{flasque} if, for every ordinal map $\theta\colon [n]\to [m]$, every commutative square
\[
\begin{tikzcd}[column sep=huge,row sep=large]
\Delta[n] \arrow[r] \arrow[d,"\theta"'] & E \arrow[d,"f"] \\
\Delta[m] \arrow[r] \arrow[ru,dashed] & X 
\end{tikzcd}
\]
admits a lift, i.e., a diagonal arrow making both triangles commute.
We call a surjective flasque simplicial map a \emph{bundle scenario}.
\end{defn}

\begin{rem}\label{rem:more on the lifting}
The terminology {above} reflects a sheaf-like property. Note that, by Yoneda's lemma, all simplicial set maps $\Delta[n]\to \Delta[m]$ are given precisely by ordinal maps $[n]\to [m]$. Given such a map, the existence of a lift for every square is equivalent to the surjectivity of the natural map
\begin{equation}\label{eq:Emtopullback}
E_m \to  X_m\times_{X_n} E_n.
\end{equation}
To our knowledge, such maps have not been studied in the literature before and first appeared in~\cite{barbosa2023bundle}. There, face and degeneracy lifting are named separately:
\begin{enumerate}
    \item  \(f\) is called \emph{locally surjective} if it has the right-lifting property with respect to
    all {co}face maps \({d^i}\colon \Delta[n-1] \to \Delta[n]\).
    \item \(f\) is called \emph{discrete over vertices} if it has the right-lifting property with respect to
    all {co}degeneracy maps \({s^i}\colon \Delta[n] \to \Delta[n-1]\).
\end{enumerate}
In words, this means that whenever the image under $f$ of a simplex of $E$ is a face or a degeneracy of a simplex in $X$, then the simplex itself is also a face or a degeneracy{, respectively, of a simplex in $E$ whose image under $f$ is the mentioned simplex in $X$.}

\end{rem}

{We say that a simplicial map $f\colon E \to X$ \emph{has finite fibers} if $|f_n^{-1}(x)|<\infty$ for every $x\in X_n$, $n\geq 0$.}
\begin{lem}\label{lem:alphasurj}
Given a morphism $\alpha\colon f\to g$ as in Diagram~\eqref{eq:alphaftog}, suppose that $\alpha$ is surjective and that $f$ is a bundle scenario {with finite fibers}. Then $g$ is also a bundle scenario {with finite fibers}.
\end{lem}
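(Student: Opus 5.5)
We check the three required properties of $g\colon F\to X$ one at a time: surjectivity, finiteness of fibers, and flasqueness. We use that $f=g\circ\alpha$ and that $\alpha$ is surjective in each degree. Here "surjective" for a simplicial map means surjective in every degree $n$.

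\emph{Surjectivity.} Since $f$ is surjective and $f_n=g_n\circ\alpha_n$, every $x\in X_n$ is of the form $g_n(\alpha_n(e))$ with $e\in f_n^{-1}(x)$. Hence $g$ is surjective.

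\emph{Finite fibers.} For $x\in X_n$, commutativity of Diagram~\eqref{eq:alphaftog} gives $\alpha_n(f_n^{-1}(x))\subset g_n^{-1}(x)$. Conversely, each $e'\in g_n^{-1}(x)$ equals $\alpha_n(e)$ for some $e$ by surjectivity of $\alpha_n$. Then $f_n(e)=g_n(e')=x$, so $g_n^{-1}(x)=\alpha_n(f_n^{-1}(x))$. This is the image of a finite set, hence finite.

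\emph{Flasqueness.} By Remark~\ref{rem:more on the lifting}, it suffices to show that for every ordinal map $\theta\colon[n]\to[m]$ the natural map $F_m\to X_m\times_{X_n}F_n$ is surjective. Take $(x,e')$ with $x\in X_m$, $e'\in F_n$ and $\theta^*(x)=g_n(e')$. First choose $e\in E_n$ with $\alpha_n(e)=e'$, using surjectivity of $\alpha_n$. Then $f_n(e)=g_n(e')=\theta^*(x)$, so $(x,e)\in X_m\times_{X_n}E_n$. Since $f$ is flasque, there is $\tilde e\in E_m$ with $f_m(\tilde e)=x$ and $\theta^*(\tilde e)=e$. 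Set $\tilde e'=\alpha_m(\tilde e)$. Then $g_m(\tilde e')=f_m(\tilde e)=x$, and by naturality of $\alpha$ we get $\theta^*(\tilde e')=\alpha_n(\theta^*\tilde e)=\alpha_n(e)=e'$. So $\tilde e'$ is the required lift.

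There is no real obstacle. The only point needing care is lifting $e'$ to $E$ first, which is where surjectivity of $\alpha$ is used. Naturality of $\alpha$ then transports the lift back down to $F$.
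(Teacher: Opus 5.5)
Your proof is correct and follows essentially the same route as the paper: you lift $e'\in F_n$ to $E_n$ via surjectivity of $\alpha_n$, solve the lifting problem for $f$, push the lift back down with $\alpha$, and obtain finiteness from $g_n^{-1}(x)=\alpha_n(f_n^{-1}(x))$. You also check surjectivity of $g$ explicitly, which the paper's proof leaves implicit even though the definition of a bundle scenario requires it.
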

%bundle scenario.  
%
\begin{proof} 
Given a commutative diagram:
\begin{equation}\label{eq:liftg}
\begin{tikzcd}[column sep=huge,row sep=large]
\Delta[n] \arrow[d,"\theta"'] \arrow[r,"e"] & F\arrow[d,"g"] \\
\Delta[m]  \arrow[r,"x"] & X
\end{tikzcd}
\end{equation}
Since $\alpha_n$ is surjective there exist $e' \in E_n$ such that $\alpha_n(e')=e$. Then, we have 
$$
x \circ \theta=g_n(e)=g_n(\alpha_n(e'))=f_n\circ e'
$$
$f$ is a bundle scenario, so there exists $\tilde{e} \in E_m$ such that the following diagram commutes:
\begin{equation}\label{eq:liftf}
\begin{tikzcd}[column sep=huge,row sep=large]
\Delta[n] \arrow[d,"\theta"'] \arrow[r,"e'"] & E\arrow[d,"f"] \\
\Delta[m] \arrow[ru,"\tilde{e}"] \arrow[r,"x"] & X
\end{tikzcd}
\end{equation}
Composing Diagrams (\ref{eq:liftf}) and (\ref{eq:alphaftog}) gives us a lifting for Diagram (\ref{eq:liftg}). 
{Finally, for every $n\geq 0$ and $x \in X_n$, since $\alpha_n$ is surjective, we have $g_n^{-1}(x)=\alpha_n(f^{-1}_n(x))$. Therefore, $g$ also has finite fibers.}
\end{proof}
%
%

%\coc{An alternative notation for $\subb(f)$ is $\on{Supp}(f)$.}

\begin{defn}\label{def:subXXXX}
Given a simplicial set $X$, we define the \emph{functor of sub-bundle scenarios over $X$}
\[
\on{Sub}_X\colon \catsSet_{/X} \to \catSet
\] 
as follows:
\begin{itemize}
    \item It sends a simplicial map $f\colon E \to X$ to the set
\[
\on{Sub}_X(f)=\set{E' \in \catsSet\mid~ E'\subset E \; \text{and}\; f|_{E'} \; \text{is a bundle scenario 
{with finite fibers}}}.
\]
    \item It sends a morphism $\alpha\colon f \to g$ to the set map
\[
\on{Sub}_X(\alpha)\colon \on{Sub}_X(f)\to \on{Sub}_X(g)
\]
defined by sending $E' \in \on{Sub}_X(f)$ to $\alpha(E')$.
\end{itemize}
\end{defn}
Note that $\alpha(E') \in  {\on{Sub}_X}(g)$ by Lemma \ref{lem:alphasurj}. Now, for the next definition, recall Definition~\ref{def:piast}. 
Note that if $f$ is a bundle scenario then $\pi^\ast(f)$ is also a bundle scenario, since liftings are preserved under pullbacks and 
for every $n \geq 0$ and $y \in Y_n$, we have
$$
(\pi^\ast (f))_n^{-1}(y) = f_n^{-1}(\pi_n(y)) \times \{y\},
$$
so $\pi^\ast (f)$ has finite fibers.
 
\begin{defn} 
For a simplicial set map $\pi\colon Y\to X$, we define a morphism in $\catCat\slice \catSet$ from $\on{Sub}_X$ to $\on{Sub}_Y$. It consists of the functor $\pi^{\ast}\colon \catsSet_{/X} \to  \catsSet_{/Y}$ and the natural transformation determined by

\[
\on{Sub}(\pi)_f\colon \on{Sub}_X(f) \to \on{Sub}_Y(\pi^\ast(f)).
\] 
This map sends $E' \in \on{Sub}_X(f)$ to $E' \times_X Y \in \on{Sub}_Y(\pi^\ast(f))$. See Diagram~(\ref{eq:piastf}) and note that $\pi^\ast(f|_{E'})=\pi^{\ast}(f)|_{E'\times_{X} Y}$:
\begin{equation}\label{eq:piastf}
\begin{tikzcd}[column sep=huge,row sep=large]
E' \arrow[d,hook] & E'\times_{X} Y \arrow[dd,"\pi^\ast(f|_{E'})"]  \arrow[l]  \\
E \arrow[d,"f"] &    \\
X & \arrow[l,"\pi"]  Y 
\end{tikzcd}
\end{equation}
This defines the morphism $(\pi^\ast,\on{Sub}(\pi)_{-})$ as follows:
$$
\begin{tikzcd}[column sep=huge,row sep=large]
\catsSet_{/X}
\arrow[rr,"\pi^{\ast}"]
\arrow[ddr,"\on{Sub}_X"',""{name=A,right}] && \catsSet_{/Y}
\arrow[ddl,"\on{Sub}_Y",""{name=B,left}] \\
 &\arrow[Rightarrow, from=A, to=B, "{\on{Sub}(\pi)_{-}}"]& \\
&  \catSet&  
\end{tikzcd}
$$
Thus, we have a functor 
$$
\on{Sub}_{-}\colon \catsSet^{\op} \to \catCat \slice \catSet 
$$
and, by its relative Grothendieck construction, the \emph{functor of sub-bundle scenarios}
% of $\on{Sub}_{-}$:
\[
\on{Sub} =\int_{\catsSet^{\op}} \on{Sub}_{-}\colon  \int_{\catsSet^{\op}} \catsSet_{/-}\to \catSet. 
\]
\end{defn} 

\subsubsection{Possibilistic simplicial distributions}\label{subsec:Possibsimpdist}

In this section, we show that the functor of 
sub-bundle scenarios  
is equivalent to the functor of possibilistic simplicial distributions.
% defined in Section \ref{subsec:Simdistonbun}.
%
\begin{defn}\label{def:zetaXf}
For a 
%pre-bundle scenario 
simplicial map
$f \colon E \to X$, we define the map 
\begin{equation}\label{eq:zetaXfff}
\zeta_{X,f}\colon \on{sDist}_{\BB}(f) \to \on{Sub}_X(f)   
\end{equation}
by sending $p \in \on{sDist}_{\BB}(f)$ to the simplicial set $\zeta_{X,f}(p)$ 
given by
\begin{equation}\label{eq:zetaxf}
\zeta_{X,f}(p)_n=\set{e \in E_n\mid ~ p_n(f_n(e))(e)\neq 0}.
\end{equation}
\end{defn}

\begin{pro}\label{pro:etaXf}
The maps $\zeta_{X,f}$ from Definition~\ref{def:zetaXf} assemble into a natural isomorphism 
%$\zeta_X$ from $\sDist_{\BB}(X)$ to $\subb(X)$.    
\[
\zeta_X \colon \sDist_{\BB,X} \to \on{Sub}_X.
\]
\end{pro}
\begin{proof}
First, we prove that $\zeta_{X,f}(p)$ is a simplicial subset of $E$.
Let \(e\in \zeta_{X,f}(p)_n\) and \(0\le i\le n\). Then
$$
\begin{aligned}
p_{n-1}(f_{n-1}(d_i(e)))(d_i(e))
&=D_{\mathbb{B}}(d_i)\bigl(p_n(f_n(e))\bigr)(d_i(e)) \\
&=\sum_{e'\,:\, d_i(e')=d_i(e)} p_n(f_n(e))(e') \\
&=p_n(f_n(e))(e)+\dotsb \neq 0,
\end{aligned}
$$
so $d_i(e)\in \zeta_{X,f}(p)_{n-1}$. Similarly, we obtain that $s_i(e)\in \zeta_{X,f}(p)_{n+1}$. 
Now we prove that $\zeta_{X,f}(p)$ lies in $\subb(X)(f)$: 

Surjectivity: Let \(x\in X_n\). There exists $e \in E_n$ such that $p_n(x)(e)\neq 0$. 
By Lemma~\ref{lem:charcofsimpdist} $f_n(e)=x$, so $e \in \zeta_{X,f}(p)_n$. 

Local surjectivity: Let $e \in \zeta_{X,f}(p)_{n-1}$, $x\in X_n$, and $0 \leq i \leq n$ such that $f_n(e)=d_i(x)$. Then 
$$
\sum_{e'\,:\, d_i(e')=e} p_n(x)(e')
= D_{\mathbb{B}}(d_i)(p_n(x))(e)
= p_{n-1}(d_i(x))(e)
= p_n(f_n(e))(e)\neq 0.
$$
Thus there exists \(e'\in E_n\) with \(d_i(e')=e\) and \(p_n(x)(e')\neq 0\).  
By Lemma~\ref{lem:charcofsimpdist}, \(x=f_n(e')\), so \(e'\in \zeta_{X,f}(p)_n\).  
  
Discreteness over vertices follows by a similar argument. {In addition, for every $x \in X_n$ the fiber
$$
(f|_{\zeta_{X,f}(p)})_n^{-1}(x)=\set{e\in \zeta_{X,f}(p)_n\mid ~ f_n(e)=x }=\set{e\in E_n\mid ~ f_n(e)=x \text{ and } p_n(x)(e)\neq 0}
$$
is finite since $p_n(x)$ is a distribution.}

To prove the naturality of $\zeta_X$, given {a morphism} $\alpha\colon f \to g$ as in (\ref{eq:alphaftog}), we prove that the following diagram commutes:
$$
\begin{tikzcd}[column sep=huge,row sep=large]
{\sDist_{\BB,X}}(f) \arrow[rr,"\zeta_{X,f}"] \arrow[dd,"\sDist_{\BB,X}(\alpha)"'] && {\subb_X}(f)
\arrow[dd,"\subb_X(\alpha)"] \\
&& \\
\sDist_{\BB,X}(g) \arrow[rr,"\zeta_{X,g}"'] && \subb_X(g)
\end{tikzcd}
$$
Given $p \in \sDist_{\BB,X}(f)$, we have 
$$
\begin{aligned}
\zeta_{X,g}(D_{\BB}(\alpha)\circ p)_n &=\set{e \in F_n \mid~ (D_{\BB}(\alpha_n) \circ p_n)(g_n(e))(e)\neq 0}  \\
&=\set{e \in F_n \mid~ \sum_{\alpha_n(e')=e} p_n(g_n(e))(e')\neq 0}  \\
&=\set{e \in F_n \mid~ \exists e' \in E_n \; \text{s.t} \; \alpha_n(e')=e \; \text{and} \; p_n(g_n(e))(e')\neq 0} \\
&=\set{e \in F_n \mid~ \exists e' \in E_n \; \text{s.t} \; \alpha_n(e')=e \; \text{and} \; p_n(g_n(\alpha_n(e')))(e')\neq 0} \\
&=\set{e \in F_n \mid~ \exists e' \in E_n \; \text{s.t} \; \alpha_n(e')=e \; \text{and} \; p_n(f_n(e'))(e')\neq 0} \\
&=\alpha_n\left(\set{e' \in E_n \mid~ p_n(f_n(e'))(e')\neq 0}\right)  \\
&=\alpha_n(\zeta_{X,f}(p)_n). \\
\end{aligned}
$$
Finally, we prove that $\zeta_{X,f}$ is an isomorphism.  
Given \(E'\in\subb_X(f)\), define \({M_f}(E')\in \sDist_{\BB,X}(f)\) by
$$
M_f(E')_n(x)(e)=
\begin{cases}
1 &\text{if } e\in E'_n \text{ and } f_n(e)=x,\\
0 &\text{otherwise}.
\end{cases}
$$
We prove that \(M_f(E') \in \sDist_{\BB,X}(f)\). Given \(x\in X_n\). Since 
\(f_n|_{E'_n}\colon E'_n\to X_n\) is surjective, there is \(e\in E'_n\) with \(f_n(e)=x\), hence \(M_f(E')_n(x)(e)=1\).  
Recall also that, by Definition \ref{def:subXXXX}, that {$|f_n^{-1}(x)|<\infty$ for every $x \in X$ }. Thus \(M_f(E')_n(x)\in D_{\mathbb{B}}(E_n)\).
Next, we prove the simpliciality of $M_f(E')$. For $x \in X_n$, $e \in E_{n-1}$, by definition we have $M_f(E')_{n-1}(d^X_i(x))(e)=1$ if and only if $e \in E'_{n-1}$ and $f_{n-1}(e)=d_i^X(x)$. This is equivalent to commutativity of
$$
\begin{tikzcd}[column sep=huge,row sep=large]
\Delta[n-1] \arrow[d,"d^i"'] \arrow[r,"e"] & E'
\arrow[d,"f|_{E'}"] \\
\Delta[n] \arrow[r,"x"] & X
\end{tikzcd}
$$
On the other hand,
$
D_\BB(d_i^E)(M_f(E')_n(x))(e)=\sum_{e':\, d_i^E(e')=e}M_f(E')_n(x)(e')
$
is equal to $1$ if and only if there exists $e' \in E_n$ that makes the following two triangles commute
$$
\begin{tikzcd}[column sep=huge,row sep=large]
\Delta[n-1] \arrow[d,"d^i"'] \arrow[r,"e"] & E'
\arrow[d,"f|_{E'}"] \\
\Delta[n] \arrow[ru,"e'"] \arrow[r,"x"] & X
\end{tikzcd}
$$
Since $f|_{E'}$ is locally surjective, we obtain 
$$
M_f(E')_{n-1}(d^X_i(x))=
D_\BB(d_i^E)\bigl(M_f(E')_n(x)\bigr).
$$
Similarly, using the fact that $f|_{E'}$ is discrete over vertices, we get 
$$
M_f(E')_{n+1}(s^X_i(x))=D_\BB(s_i^E)(M_f(E')_n(x)).
$$
Finally, we prove that $D_{\BB}(f) \circ M_f(E')=\delta_X$. Given $x,x' \in X_n$, then 
$$
D_{\BB}(f_n)(M_f(E')_n(x))(x')=\sum_{e:\, f_n(e)=x'}M_f(E')_n(x)(e)=\begin{cases} 
1 & \text{if} \,\, x=x'\\
0 & \text{otherwise} 
\end{cases}
$$
Now let $p \in \sDist_{\BB,X}(f)$, $x \in X_n$, and $e\in f_n^{-1}(x)$. Then 
$
M_f(\zeta_{X,f}(p))_n(x)(e)=1 
$
if and only if $e \in \zeta_{X,f}(p)_n$, which holds if and only if $p_n(x)(e)=1$. Hence, 
$$
M_f(\zeta_{X,f}(p))=p
$$ 
Conversely, if $E' \in \subb(X)(f)$, then
$e \in \zeta_{X,f}(M_f(E'))_n$ if and only if $M_f(E')(f_n(e))(e)=1$, which holds if and only if $e \in E_n'$. 
We also proved that 
$$
\zeta_{X,f}(M_f(E'))=E'
$$
We therefore conclude that $\zeta_{X,f}$ is an isomorphism.
\end{proof}
\begin{pro}\label{pro:sdis=sub}
{There is a natural isomorphism} 
\[
\zeta\colon \sDist_{\BB,-} \to \on{Sub}_{-},
\]
defined as $\zeta_X$ at a simplicial set $X$.
%$\zeta$ is a natural isomorphism from $\sDist_{\BB,-}$ to $\subb_{-}$.
\end{pro}
\begin{proof}
By Proposition \ref{pro:etaXf}, for every object $X \in \catsSet$, we have the isomorphism $(\Id_{{\catsSet_{/X}}},\zeta_X)$ in $\catCat \slice \catsSet$:
$$
\begin{tikzcd}[column sep=huge,row sep=large]
{\catsSet_{/X}} 
\arrow[rr,equal]
\arrow[ddr,"\sDist_{\BB,X}"',""{name=A,right}] && {\catsSet_{/X}}
\arrow[ddl,"\subb_X",""{name=B,left}] \\
 &\arrow[Rightarrow, from=A, to=B, "{\zeta_X}"]& \\
&  \catSet&  
\end{tikzcd}
$$
We should prove that the following diagram commutes in the category $\catCat \slice \catsSet$: 
$$
\begin{tikzcd}[column sep=huge,row sep=large]
\sDist_{\BB,X} \arrow[rr,"{(\pi^\ast{,}\pi^\ast_{-})}"] \arrow[dd,"(\Id_{\catsSet_{/X}}{,}\zeta_X)"'] &&
\sDist_{\BB,Y} 
\arrow[dd,"(\Id_{\catsSet_{/Y}}{,}\zeta_Y)"] \\
&& \\
\subb_X \arrow[rr,"(\pi^\ast{,} \subb(\pi)_{-})"'] && \subb_Y
\end{tikzcd}
$$
That means, to prove that for every {simplicial map} $f\colon E \to X$ the following diagram commutes:
$$
\begin{tikzcd}[column sep=huge,row sep=large]
\sDist_{\BB,X}(f) \arrow[rr,"\pi^{\ast}_f"] \arrow[dd,"\zeta_{X,f}"'] && \sDist_{\BB,Y}(\pi^\ast(f)) 
\arrow[dd,"\zeta_{Y{,}\pi^{\ast}(f)}"] \\
&& \\
\subb_X(f) \arrow[rr,"\subb(\pi)_{f}"'] && \subb_Y(\pi^{\ast}(f))
\end{tikzcd}
$$
Given a simplicial distribution $p \in \sDist_{\BB,X}(f)$, we have
$$
\begin{aligned}
\zeta_{Y{,}\pi^{\ast}(f)}\left(\pi^{\ast}_f(p)\right)_n
&=\set{(e,y)\in E_n \times_{X_n} Y_n\mid ~  \pi^{\ast}_f(p)(y)(e,y)\neq 0} \\
&=\set{(e,y)\in E_n \times_{X_n} Y_n\mid ~  p_n(\pi_n(y))(e)\neq 0}  \\
&=\set{(e,y) \in E_n \times Y_n \mid~ f_n(e)=\pi_n(y) \;\text{and}\;   p_n(f_n(e))(e) \neq 0} \\
&=\set{(e,y)  \in E_n \times Y_n \mid~ f_n(e)=\pi_n(y) \;\text{and}\;  e\in \zeta_{X,f}(p)_n} \\
&=\zeta_{X,f}(p)_n \times_{X_n} Y_n \\
& =\subb(\pi)_f\left(\zeta_{X,f}(p)\right)_n
\end{aligned}
$$
see Equation (\ref{eq:piastp}). 
\end{proof}

\begin{thm}\label{thm:possibilistic sdist as sub}
The functors $\on{sDist}_{\BB}\colon \int \catsSet_{/-} \to \catSet$ and $\on{Sub}\colon\int \catsSet_{/-}\to \catSet$ are naturally isomorphic.
\end{thm}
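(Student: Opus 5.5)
The plan is to deduce the theorem formally from Proposition~\ref{pro:sdis=sub} by the functoriality of the relative Grothendieck construction (Definition~\ref{def:GenGroth}). By Definition~\ref{def:simplicial distribution functor}, $\on{sDist}_\BB=\int_{\catsSet^\op}\sDist_{\BB,-}$. By the definition of the functor of sub-bundle scenarios, $\on{Sub}=\int_{\catsSet^\op}\on{Sub}_{-}$. Both are functors out of the same category $\int_{\catsSet^\op}\catsSet_{/-}$, since both $\sDist_{\BB,-}$ and $\on{Sub}_{-}$ lift the same functor $\catsSet_{/-}\colon\catsSet^\op\to\catCat$. Proposition~\ref{pro:sdis=sub} provides a natural isomorphism $\zeta\colon\sDist_{\BB,-}\to\on{Sub}_{-}$ of functors into $\catCat\slice\catSet$. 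Its components $(\Id_{\catsSet_{/X}},\zeta_X)$ have identity functor part, so $\zeta$ lies over the identity of $\catsSet_{/-}$. I would therefore define the components of the desired isomorphism on an object $(X,f)$ of $\int\catsSet_{/-}$ to be $\zeta_{X,f}\colon\sDist_{\BB,X}(f)\to\on{Sub}_X(f)$.

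First, each $\zeta_{X,f}$ is a bijection by Proposition~\ref{pro:etaXf}. It remains to check naturality with respect to morphisms of $\int\catsSet_{/-}$. Such a morphism $(X,f)\to(Y,g)$ consists of a simplicial map $\pi\colon Y\to X$ together with a morphism $\alpha\colon\pi^\ast(f)\to g$ in $\catsSet_{/Y}$; this assumes the standard convention for the Grothendieck construction over $\catsSet^\op$. Under either functor the morphism acts as the composite of the ``reindexing'' part and the ``vertical'' part:
\[
\sDist_{\BB,X}(f)\xrightarrow{\pi^\ast_f}\sDist_{\BB,Y}(\pi^\ast f)\xrightarrow{\alpha_\ast}\sDist_{\BB,Y}(g),
\qquad
\on{Sub}_X(f)\xrightarrow{\on{Sub}(\pi)_f}\on{Sub}_Y(\pi^\ast f)\xrightarrow{\on{Sub}_Y(\alpha)}\on{Sub}_Y(g).
\]
Naturality thus splits into two commuting squares. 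The square for $\alpha$ commutes by the naturality of $\zeta_Y$ on $\catsSet_{/Y}$, established in Proposition~\ref{pro:etaXf}. The square for $\pi$ is exactly the commutation $\zeta_{Y,\pi^\ast f}\circ\pi^\ast_f=\on{Sub}(\pi)_f\circ\zeta_{X,f}$ verified in the proof of Proposition~\ref{pro:sdis=sub}. Pasting the two squares gives naturality for an arbitrary morphism. The inverse transformation has components $M_f$, and it is automatically natural because a componentwise-invertible natural transformation has a natural inverse.

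The main obstacle is the bookkeeping in the relative Grothendieck construction. One must confirm that the action of $\int F$ on a morphism $(\pi,\alpha)$ really factors as the reindexing map followed by the vertical map, with the pseudo-functoriality of pullbacks $\pi^\ast$ handled consistently. The point to check is that a morphism in $\catCat\slice\catSet$ whose functor component is the identity induces, via Definition~\ref{def:GenGroth}, a natural transformation between the Grothendieck functors over the identity of $\int\catsSet_{/-}$. I would first prove this general lemma: the relative Grothendieck construction sends natural isomorphisms over the identity to natural isomorphisms. If the definition in the appendix is phrased in those terms, the theorem follows immediately by applying the lemma to $\zeta$.
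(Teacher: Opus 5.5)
Your proposal is correct and matches the paper's proof, which simply combines Proposition~\ref{pro:sdis=sub} with Proposition~\ref{pro:NatNat}; the latter is exactly the general lemma you propose to prove, and it is already stated in the appendix. Your explicit splitting of a morphism $(\pi,\alpha)$ into a reindexing square and a vertical square is a faithful unpacking of Definition~\ref{def:GenGroth} and of that proposition in this case.
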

\begin{proof}
Follows directly 
%by 
from
Propositions \ref{pro:sdis=sub} and \ref{pro:NatNat}.
\end{proof}

\subsection{Event scenarios}

The supports of empirical models can be studied using special types of presheaves introduced in \cite{kharoof2025simplicial}, {whose fundamental properties were first investigated in \cite{abramsky2015contextuality}.}

\begin{defn}\label{def:nontrivlocalsurj}
Let \(F \colon \catC^{\op} \to \catSet\) be a functor.
We say that \(F\) is \emph{flasque} if, for every morphism \(a \to b\) in \(\catC\), the induced map
$
F(b) \to F(a)
$
is surjective. A presheaf on a simplicial complex is called flasque if the corresponding functor is flasque. A presheaf 
{$F\colon \catC^{\op}_{\Sigma} \to \catSet$} that is both non-trivial, i.e., $F(\sigma)\neq \emptyset$ for every {$\sigma \in \Sigma$}, and flasque, is called an \emph{event scenario}. {The event scenario is called \emph{finite} if $|F(\sigma)|<\infty$ for every $\sigma \in \Sigma$.}
\end{defn}
\begin{pro}\label{pro:realization of event is bundle}
A presheaf $F$ on $\Sigma$ is an event scenario if and only if its realization $f_F$, as defined in Definition~\ref{def:realization of F}, is a bundle scenario over $S(\Sigma)$.
\end{pro}
\begin{proof}
{First, we show that $f_F$ is always discrete over vertices (see Remark~\ref{rem:more on the lifting}).
Given a codegeneracy map $[n]\to[n-1]$ in the simplex category $\Delta$, we prove that the induced map
\[
E(F)_{n-1}
\longrightarrow
S(\Sigma)_{n-1}\times_{S(\Sigma)_n} E(F)_n
\]
is surjective (cf.~(\ref{eq:Emtopullback})).
An element in
$S(\Sigma)_{n-1}\times_{S(\Sigma)_n}E(F)_n$ is in the form 
$$
\left((v_0,\dots,v_i,v_{i+2},\dots,v_n),(v_0,\dots,v_i,v_i,v_{i+2},\dots,v_n),a\right).
$$ 
Such an element is clearly the image of
$$
\bigl((v_0,\dots,v_i,v_{i+2},\dots,v_n),a\bigr)\in E(F)_{n-1}.
$$
Hence the map is surjective, and therefore $f_F$ is discrete over vertices.

Now consider a coface map $[n-1]\to[n]$ in $\Delta$. One can see that the induced map
\[
E(F)_{n}
\longrightarrow
S(\Sigma)_{n}\times_{S(\Sigma)_{n-1}} E(F)_{n-1}
\]
is surjective if and only if, for every inclusion $\iota_i$ as in (\ref{eq:iotainclusion}), the map 
$F(\iota_i)$ is surjective. Thus, $f_F$ is flasque if and only if $F$ is flasque.
Finally, it is obvious that $f_F$ is surjective if and only if $F$ is non-trivial.}
%\coc{Aziz, could you include the proof? This will be a good justification for flasque simplicial map terminology.}
\end{proof}

%In this section, we define the functor of sub-event scenarios using the relative Grothendieck construction. 
We begin by defining a partial order on 
%$\Func(\catC^{\op},\catSet)$
the functor category $[\catC^\op,\catSet]$.
% for a given category $\catC$.
%

\begin{defn}\label{def:Funcrelations}
Given functors $F',F\colon \catC^{\op}\to \catSet$, we write $F'\leq F$ if $F'$ is a subfunctor of $F$; that is, if $F'(a)\subset F(a)$ for every object $a$ of $\catC$, and for every morphism $s\colon a\to b$ in $\catC$ we have
\[
{F(s)|_{F'(b))} =F'(s).}
\]
Equivalently, the inclusions $F'(a)\to F(a)$ assemble into a natural transformation $F'\to F$. For presheaves on simplicial complexes, we use the same notion to define an order among them.
\end{defn}

%\begin{defn}\label{def:Funcrelations}
%Given functors $F', F\colon \catC^{\op} \to \catSet$, we write $F' \leq F$ if:
%\begin{itemize}
%    \item {$F'$ is a subfunctor of $F$, i.e.,} $F'(a) \subset F(a)$ for every object $a$ of $\catC$;  
%    \item the inclusions $\{F'(a) \to F(a)\}_{a \in \catC}$ 
%form a natural transformation from $F'$ to $F$.     
%\end{itemize}
%\end{defn}
%
%The second condition in Definition \ref{def:Funcrelations} is equivalent to requiring that for every morphism $s\colon a \to b$ in $\catC$, we have $F'(s)(x) = F(s)(x)$ for every $x \in F'(b)$.

%
%
%

\begin{defn}\label{def:alphaastF}
Given functors $F,G\colon \catC^{\op} \to \catSet$ and a natural transformation $\alpha \colon F \to G$, we define the functor 
$\alpha_\ast(F)\colon \catC^{\op} \to \catSet$ as follows:
\begin{itemize}
    \item For an object $a$, set $\alpha_\ast(F)(a) = \alpha_{a}(F(a))$.
    \item For a morphism $s\colon a \to b$, define $\alpha_\ast(F)(s)$ to be the restriction
    \begin{equation}\label{eq:Gsalpha}
    G(s)|_{\alpha_{b}(F(b))} \colon \alpha_{b}(F(b)) \to \alpha_{a}(F(a)).
    \end{equation}
\end{itemize}      
\end{defn}

The map in Eq.~\eqref{eq:Gsalpha} is well-defined by the commutative diagram
\begin{equation}\label{eq:GFalpha}
\begin{tikzcd}[column sep=huge,row sep=large]
F(b) \arrow[r,"\alpha_{b}"] \arrow[d,"F(s)"'] & G(b) 
\arrow[d,"G(s)"] \\
F(a) \arrow[r,"\alpha_{a}"'] & G(a)
\end{tikzcd}
\end{equation}
Note that $\alpha_{\ast}(F)\leq G$.

%\coc{finiteness omitted}
 
\begin{lem}\label{lem:alphaastF}
Let $F$ and $G$ be presheaves on $\Sigma$, and let $\alpha \colon F \to G$ be a natural transformation. If $F$ is {a finite} event scenario, then $\alpha_\ast(F)$, constructed in Definition~\ref{def:alphaastF}, is also {a finite} event scenario.
\end{lem}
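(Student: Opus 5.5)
The plan is to verify the three defining properties of a finite event scenario directly for $\alpha_\ast(F)$, namely non-triviality, finiteness and flasqueness, using only Definition~\ref{def:alphaastF} and the naturality square~\eqref{eq:GFalpha}. By the remark after Definition~\ref{def:alphaastF}, $\alpha_\ast(F)$ is already a well-defined presheaf with $\alpha_\ast(F)\leq G$, so functoriality needs no further check.

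\emph{Non-triviality and finiteness.} For each simplex $\sigma\in\Sigma$ we have $\alpha_\ast(F)(\sigma)=\alpha_\sigma(F(\sigma))$. This set is non-empty because $F(\sigma)\neq\emptyset$. It is finite because it is the image of the finite set $F(\sigma)$. Both facts are immediate.

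\emph{Flasqueness.} Let $s\colon\sigma\to\tau$ be an inclusion of simplices; we must show that $G(s)|_{\alpha_\tau(F(\tau))}\colon \alpha_\tau(F(\tau))\to\alpha_\sigma(F(\sigma))$ is surjective. Take $y\in\alpha_\sigma(F(\sigma))$ and write $y=\alpha_\sigma(x)$ with $x\in F(\sigma)$. Since $F$ is flasque, there is $x'\in F(\tau)$ with $F(s)(x')=x$. The commutativity of~\eqref{eq:GFalpha} then gives
\[
G(s)(\alpha_\tau(x'))=\alpha_\sigma(F(s)(x'))=\alpha_\sigma(x)=y,
\]
and $\alpha_\tau(x')\in\alpha_\tau(F(\tau))$, so $y$ lies in the image. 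This is the presheaf analogue of Lemma~\ref{lem:alphasurj}: there, surjectivity of $\alpha$ was needed for the simplicial version, whereas here we only ever work on the image, so no surjectivity hypothesis is required.

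No step is a genuine obstacle. The only point needing care is that the restriction maps of $\alpha_\ast(F)$ are those of $G$, not of $F$. This is why the naturality square is the key input: it converts a lift in $F$ into a lift in $\alpha_\ast(F)$.
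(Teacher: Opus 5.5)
Your proposal is correct and follows essentially the same route as the paper's proof. You verify non-triviality and finiteness from $\alpha_\ast(F)(\sigma)=\alpha_\sigma(F(\sigma))$, and you obtain flasqueness by lifting along the surjective map $F(s)$ and transporting the lift through the naturality square~\eqref{eq:GFalpha}.
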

\begin{proof}
{Non-triviality:} For every $\sigma \in \Sigma$, we have  
$
\alpha_\ast(F)(\sigma) = \alpha_{\sigma}(F(\sigma)) \neq \emptyset
$,
since $F(\sigma) \neq \emptyset$.

{Local surjectivity:} For $s\colon \sigma \hookrightarrow \tau$, the map
\[
G(s)|_{\alpha_{\tau}(F(\tau))} \colon \alpha_{\tau}(F(\tau)) \to \alpha_{\sigma}(F(\sigma))
\]
is surjective. This follows from the commutativity of diagram (\ref{eq:GFalpha}) and the fact that $F(s)$ is surjective. 
{Finally, for every $\sigma \in \Sigma$, since $F(\sigma)$ is finite, we have $|\alpha_\ast(F)(\sigma)|=|\alpha_{\sigma}(F(\sigma))|<\infty$. }

%\textbf{Locality:} Given a simplex $\sigma \in \Sigma$ and a cover $\{\sigma_1, \cdots, \sigma_n\}$ of $\sigma$, the inclusions 
%$$
%\alpha_{\sigma_i}(F(\sigma_i)) \hookrightarrow G(\sigma_i)
%$$
%induce a map
%$
%\lim \alpha_\ast(F) \circ \chi_{\sigma_1, \cdots, \sigma_n} \to \lim G \circ \chi_{\sigma_1, \cdots, \sigma_n}
%$
%making the following diagram commute:
%\begin{equation}\label{eq:chialpha}
%\begin{tikzcd}
%\alpha_\ast(F)(\sigma) \arrow[r,hook] \arrow[d] & G(\sigma) 
%\arrow[d,hook] \\
%\lim \alpha_\ast(F)\circ \chi_{\sigma_1,\cdots,\sigma_n} \arrow[r] & 
%\lim G\circ \chi_{\sigma_1,\cdots,\sigma_n}
%\end{tikzcd}
%\end{equation}
%Diagram (\ref{eq:chialpha}) and the locality of $G$ implies that the canonical map
%$
% \alpha_\ast(F)(\sigma)  \to \lim \alpha_\ast(F)\circ \chi_{\sigma_1,\cdots,\sigma_n} 
%$
%is injective.
\end{proof}
\begin{defn}
For a simplicial complex $\Sigma$, we define the \emph{functor of sub-event scenarios over $\Sigma$}
\[
\on{ESub}_{\Sigma} \colon \catPSh_{/\Sigma} \to \catSet
\]
as follows:
\begin{itemize}
    \item For a presheaf $F$ on $\Sigma$, define
    \[
    \on{ESub}_{\Sigma}(F) = \left\{F' \colon \catC_{\Sigma}^{\op} \to \catSet \mid~ F' \leq F \text{ and } F' \text{ is {a finite} event scenario} \right\}.
    \]
    \item For a natural transformation $\alpha\colon F \to G$, define
    \[
   \alpha_\ast= \on{ESub}_{\Sigma}(\alpha) \colon \on{ESub}_{\Sigma}(F) \to \on{ESub}_{\Sigma}(G)
    \]
    by sending $F' \in \on{ESub}_{\Sigma}(F)$ to the functor
    \[
    (\alpha \circ j)_\ast(F')\colon \catC^{\op}_{\Sigma} \to \catSet,
    \]
    where $j\colon F' \to F$ is the natural transformation given by the inclusions $F'(\sigma) \hookrightarrow F(\sigma)$.
\end{itemize}
\end{defn}

%\coc{comment the construction above is well-defined %because
%\medskip

{See Definition \ref{def:alphaastF} and Lemma \ref{lem:alphaastF}.}
%}

%\coc{reorganize the following, comes from rel. Groth.}
%

Let $\pi\colon {\Gamma} \to \hat N \Sigma$ be a simplicial complex map. {As mentioned in Section \ref{subsec:nattt} it induces a functor 
$\pi^*\colon \catPSh_{/\Sigma} \to \catPSh_{/\Gamma}$, by sending $F$ to the composite $F \circ \bar \pi$, 
where $\bar \pi \colon \catC_\Gamma \to \catC_\Sigma$ is the corresponding functor to $\pi$.
Obviously,} if $F'$ is an event scenario such that $F' \leq F$, then $\pi^{\ast}(F')$ is also an event scenario satisfying 
$\pi^{\ast}(F') \leq \pi^{\ast}(F)$. {Thus, we have the following definition:}

%\coc{need to fix the ref:
%\bigskip
%This follows.
%}

\begin{defn}\label{def:EsubbFunc}
For a simplicial complex map $\pi\colon \Gamma \to \hat N \Sigma$ and a functor
$F \colon \catC^\op_{\Sigma} \to \catSet$, we define a map 
\begin{equation}\label{eq:Esubbpi}
\on{ESub}_{\pi}(F)\colon \on{ESub}_{\Sigma}(F) \to \on{ESub}_{\Gamma}(\pi^\ast(F)) 
\end{equation}
by sending $F' \in \on{ESub}_{\Sigma}(F)$ to $\pi^{\ast}(F') \in \on{ESub}_{\Gamma}(\pi^\ast(F))$.

Given presheaves $F,G$ on $\Sigma$, and a natural transformation $\alpha \colon F \to G$, one can check that the following diagram commutes:
$$
 \begin{tikzcd}[column sep=huge,row sep=large]
\on{ESub}_{\Sigma}(F) \arrow[d,"\alpha_\ast"]
\arrow[r,"\on{ESub}_{\pi}(F)"] & \on{ESub}_{\Gamma}(\pi^\ast(F)) \arrow[d,"(\Id_{\overline{\pi}}\star \alpha)_\ast"]   \\
\on{ESub}_{\Sigma}(G) \arrow[r,"\on{ESub}_{\pi}(G)"] & \on{ESub}_{\Gamma}(\pi^\ast(G))
\end{tikzcd}
$$
Therefore, the maps in Eq.~\eqref{eq:Esubbpi} form a natural transformation from $\on{ESub}_{\Sigma}$ to $\on{ESub}_{\Gamma}\circ \pi^\ast$.

For a simplicial complex map $\pi\colon \Gamma \to \hat N \Sigma$, we define the following morphism in $\catCat \slice \catSet$:
\begin{equation}\label{dia:naturalpistar}
\begin{tikzcd}[column sep=huge, row sep=large]
\catPSh_{/\Sigma} \arrow[rr,"\pi^{\ast}"]
\arrow[dr,"\on{ESub}_{\Sigma}"',""{name=A,right}] && {\catPSh_{/\Gamma}} \arrow[dl,"\on{ESub}_{\Gamma}",""{name=B,left}] 
\arrow[Rightarrow, from=A, to=B, "\on{ESub}_{\pi}"]\\
&  \catSet &  
\end{tikzcd}
\end{equation}
Thus, we obtain a functor 
$$
\on{ESub}_{-}\colon \catsRel^\op \to \catCat \slice \catSet 
$$
that sends a simplicial complex $\Sigma$ to the functor $\on{ESub}_{\Sigma} \colon \catPSh_{/\Sigma} \to \catSet$, and sends a simplicial complex map 
$\pi\colon \Gamma \to \hat N \Sigma$ to the diagram~\eqref{dia:naturalpistar}. The relative Grothendieck construction of this functor is the \emph{functor of sub-event scenarios}, denoted by
\[
\on{ESub} = \int_{\catsRel^{\op}} \on{ESub}_{-} \colon  \int_{\catsRel^{\op}} \catPSh_{/-} \to \catSet.
\]
\end{defn}

\subsubsection{Possibilistic empirical models}
In this section, we show that the functor of sub-event scenarios is equivalent to the functor of possibilistic empirical models. {We shall use the equivalence in the simplicial setting established in Section~\ref{subsec:Possibsimpdist}. To this end, we begin with the following proposition.}

%defined in section \ref{subsec:Empirmod}. This done using a relative version of the Grothendieck construction
%(Section \ref{sec:gro}).
{
\begin{pro}\label{pro:rhonaturual}
Let $F$ be a presheaf on $\Sigma$. By sending $F' \in \Esubb_{\Sigma}(F)$ to $E(F')$, we obtain a natural isomorphism 
$$
\rho_{\Sigma} \colon \Esubb_{\Sigma} \to \subb_{S(\Sigma)}\circ \Xi_{\Sigma}.
$$
See Definition \ref{def:subXXXX} and the functor in (\ref{eq:functorXi}).
\end{pro}
}
\begin{proof}
{
We first show that
\[
E(F')\in \subb_{S(\Sigma)}(\Xi_{\Sigma}(F))
=
\subb_{S(\Sigma)}(f_F)
\]
for every $F'\in \Esubb_{\Sigma}(F)$. Since $F'\leq F$, it follows that $E(F')$ is a simplicial subset of $E(F)$. By Proposition~\ref{pro:realization of event is bundle}, the simplicial map $f_{F'}=f_F|_{E(F')}$ is a bundle scenario. Moreover, the fibres of $f_{F'}$ are precisely the sets $F'(\sigma)$ for $\sigma\in \Sigma$, and hence are finite. 

Next, we show that $\rho_{\Sigma,F}$ is an isomorphism. Suppose that $F',F''\in \Esubb_{\Sigma}(F)$ satisfy $
E(F')=E(F'')$. For every simplex
$
\sigma=\{v_0,\dots,v_n\}$ such that
$v_0\leq \cdots \leq v_n
$,
we have
\[
\{(v_0,\dots,v_n)\}\times F'(\sigma)
=
(f_{F'})_n^{-1}(v_0,\dots,v_n)
=
(f_{F''})_n^{-1}(v_0,\dots,v_n)
=
\{(v_0,\dots,v_n)\}\times F''(\sigma).
\]
Hence, $F'(\sigma)=F''(\sigma)$, which proves injectivity. 

To prove surjectivity, let $
E'\in \subb_{S(\Sigma)}(f_F)$.
For each simplex
$
\sigma=\{v_0,\dots,v_n\}\in \Sigma$ such that 
$v_0\leq \cdots \leq v_n$, 
define
\[
F'(\sigma)
:=
\left\{
a \mid~
\bigl((v_0,\dots,v_n),a\bigr)\in E'_n
\right\}.
\]
Since the restriction
\[
f_F|_{E'}\colon E'\to S(\Sigma)
\]
is surjective and has finite fibres, we obtain that $0<|F'(\sigma)|<\infty$.

Now let
$\sigma=\{v_0,\dots,v_n\} \subset \tau=\{u_0,\dots,u_m\}$, and let
$\theta\colon [n]\to [m]$ be the corresponding ordinal map. Since $E'$ is a simplicial subset of $E(F)$, we have a commutative diagram}
$$
\begin{tikzcd}[column sep=huge,row sep=large]
E'_m \arrow[r,"(f_F)_m"]
\arrow[d,"\theta^\ast"'] &
S(\Sigma)_m \arrow[d,"\theta^\ast"] \\
E'_n \arrow[r,"(f_F)_n"'] &
S(\Sigma)_n,
\end{tikzcd}
$$
{
which induces a map $F'(\tau)\to F'(\sigma)$. The functoriality of $F'$ follows from the simplicial identities satisfied in $E'$. Furthermore, since $f_F|_{E'}$ is a flasque simplicial map, it follows that $F'$ is a flasque presheaf. Therefore,
$
F'\in \Esubb_{\Sigma}(F).
$. By construction, we have
$$
\rho_{\Sigma,F}(F')
=
E(F')
=
E',
$$
which proves surjectivity.

Finally, we prove the naturality. Let $\alpha\colon F \to G$ be a morphism between presheaves on $\Sigma$, We claim that the diagram commutes}
$$
\begin{tikzcd}[column sep=huge,row sep=large]
\Esubb_{\Sigma}(F) \arrow[r,"\rho_{\Sigma,F}"] \arrow[d,"\Esubb_{\Sigma}(\alpha)"'] & \subb_{S(\Sigma)}(f_F) 
\arrow[d,"\subb_{S(\Sigma)}(\Xi_{\Sigma}(\alpha))"] \\
\Esubb_{\Sigma}(G) \arrow[r,"\rho_{\Sigma,G}"'] & \subb_{S(\Sigma)}(f_G)
\end{tikzcd}
$$
{Indeed, let $F' \in \Esubb_{\Sigma}(F)$, and let $j\colon F' \to F$ denote the natural transformation given by the inclusions $F'(\sigma) \hookrightarrow F(\sigma)$. Then 
$$
\begin{aligned}
    (\subb_{S(\Sigma)}(\Xi_{\Sigma}(\alpha))\circ \rho_{\Sigma,F})(F')
    &=
(\subb_{S(\Sigma)}(\Xi_{\Sigma}(\alpha))(E(F'))  \\
&={\Xi_{\Sigma}(\alpha)(E(F'))} \\
&=E\left( (\alpha \circ j)_\ast(F')\right)  \\
&=\rho_{\Sigma,G} ( (\alpha \circ j)_\ast(F')) \\
&=(\rho_{\Sigma,G} \circ \Esubb_{\Sigma}(\alpha))(F'). 
\end{aligned}
$$
}
\end{proof}
{For every simplicial complex $\Sigma$, there is a natural isomorphism 
\[
\eta_\Sigma\colon \on{Emp}_{\BB,\Sigma} \to \on{ESub}_\Sigma,
\]
given by the following composition: 
\begin{equation}\label{eq:etadefinition}
\eta_\Sigma=\rho_{\Sigma}^{-1}\circ (\zeta_{S(\Sigma)}\ast \Id_{\Xi_{\Sigma}}) \circ \Phi_{\BB,\Sigma},
\end{equation}
where $\ast$ is the horizontal composition between natural transformations. See Propositions \ref{pro:etaXf}, \ref{pro:rhonaturual} and the natural 
isomorphism in (\ref{eq:isoemptoxisdis}).

For a presheaf $F$ on $\Sigma$ the isomorphism 
\begin{equation}\label{eq:isomorphismL}
\eta_{\Sigma,F}\colon \on{Emp}_{\BB,\Sigma}(F) \to \on{ESub}_{\Sigma}(F)   
\end{equation}
acts by sending $p \in \on{Emp}_{\BB,\Sigma}(F)$ to the event scenario 
$L(p)\colon \catC^{\op}_{\Sigma} \to \catSet$, where
$$
L(p)(\sigma)=\set{x\in F(\sigma)\mid~ p_{\sigma}(x)=1}.
$$ 
}

\begin{pro}\label{pro:emp=sub}
{There is a natural isomorphism}  
\[
\eta\colon \on{Emp}_{\BB,-} \to \on{ESub}_-,
\]
defined as $\eta_\Sigma$ at a simplicial complex $\Sigma$.
%$\eta$ 
% from $\Emp_{\BB,-}$ to $\Esubb_{-}$.
\end{pro}

\begin{proof}
{As shown above, }for every $\Sigma$ object in $\catsRel^\op$ we have the isomorphism $(\Id_{{\catPSh_{/\Sigma}} }{,}\eta_{\Sigma})$ in $\catCat\slice \catSet$:
$$
\begin{tikzcd}[column sep=huge, row sep=large]
{\catPSh_{/\Sigma}} 
\arrow[rr,equal]
\arrow[ddr,"\Emp_{\BB,\Sigma}"',""{name=A,right}] && {\catPSh_{/\Sigma}} 
\arrow[ddl,"\Esubb_{\Sigma}",""{name=B,left}] \\
 &\arrow[Rightarrow, from=A, to=B, "{\eta_\Sigma}"]& \\
&  \catSet&  
\end{tikzcd}
$$

Given a simplicial complex map $\pi \colon \Sigma' \to \hat{N} \Sigma$, we need to prove that the following diagram commutes in $\catCat\slice \catSet$:    
$$
\begin{tikzcd}[column sep=huge,row sep=large]
\Emp_{\BB,\Sigma} \arrow[rr,"{(\pi^\ast{,}\Emp_{\BB,\pi})}"] \arrow[dd,"(\Id_{\catPSh_{/\Sigma}}{,}\eta_{\Sigma})"'] && \Emp_{\BB,{\Sigma'}} 
\arrow[dd,"(\Id_{\catPSh_{/\Sigma'}}{,}\eta_{\Sigma'})"] \\
&& \\
\Esubb_\Sigma \arrow[rr,"(\pi^\ast{,} \Esubb_{\pi})"'] && \Esubb_{\Sigma'}
\end{tikzcd}
$$
Given a {presheaf} $F\colon \catC^{\op}_{\Sigma} \to \catSet$, and an empirical model $p=\{p_{\sigma}\}_{\sigma \in \Sigma} \in \Emp_{\BB}(F)$. We denote $\Emp_{\BB,{\pi}}(F)(p)= \{p_{\overline{\pi}(\tau})\}_{\tau \in \Sigma'}$ by $q$, then we have: 
$$
\eta_{\Sigma',F}\left(\Emp_{\BB,{\pi}}(F)(p)\right)= \eta_{\Sigma',F}\left(q\right)=L(q),
$$
and $L(q)(\tau)=\set{x\in F(\overline{\pi}(\tau))\mid~ q_{\tau}(x)=1}=\set{x\in F(\overline{\pi}(\tau)) \mid ~ p_{\overline{\pi}(\tau)}(x)=1}$ for every $\tau \in \Sigma'$. On the other hand, we have 
$$
\Esubb_{\pi}(F)\left(\eta_{\Sigma,F}(p)\right)=\pi^{\ast}\left(L(p)\right)=L(p)\circ \overline{\pi}, 
$$
and $L\left(\overline{\pi}(p)(\tau)\right)=\set{x\in F(\overline{\pi}(\tau))\mid ~ p_{\overline{\pi}(\tau)}(x)=1}$ for every $\tau \in \Sigma'$.
\end{proof}

\begin{thm}\label{cor:EmpB=Esub}
The functors $\Emp_{\BB}\colon \int \catPSh_{/-} \to \catSet$ and $\Esubb\colon \int \catPSh_{/-} \to \catSet$ are 
%equivalent.
naturally isomorphic.
\end{thm}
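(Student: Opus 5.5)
The plan is to deduce the theorem formally from Proposition~\ref{pro:emp=sub}, exactly parallel to the deduction of Theorem~\ref{thm:possibilistic sdist as sub} from Proposition~\ref{pro:sdis=sub}. By Definition~\ref{def:empirical model functor}, $\Emp_{\BB}=\int_{\catsRel^{\op}}\Emp_{\BB,-}$, and by the definition following Definition~\ref{def:EsubbFunc}, $\Esubb=\int_{\catsRel^{\op}}\Esubb_{-}$. Both are relative Grothendieck constructions of functors $\catsRel^{\op}\to\catCat\slice\catSet$ with the same underlying functor $\catPSh_{/-}\colon\catsRel^{\op}\to\catCat$. Proposition~\ref{pro:emp=sub} provides a natural isomorphism $\eta\colon\Emp_{\BB,-}\to\Esubb_{-}$ whose components $(\Id_{\catPSh_{/\Sigma}},\eta_\Sigma)$ have identity functor part. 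It therefore suffices to invoke the functoriality of the relative Grothendieck construction (Proposition~\ref{pro:NatNat}, as used for Theorem~\ref{thm:possibilistic sdist as sub}). That result says a natural isomorphism between functors into $\catCat\slice\catSet$ which is the identity on the underlying categories induces a natural isomorphism between the integrated functors on $\int_{\catsRel^{\op}}\catPSh_{/-}$.

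Concretely, an object of $\int\catPSh_{/-}$ is a pair $(\Sigma,F)$ with $F$ a presheaf on $\Sigma$. A morphism $(\Sigma,F)\to(\Sigma',F')$ consists of a relation $\pi\colon\Sigma'\to\hat N\Sigma$ together with a natural transformation $\alpha\colon\pi^\ast F\to F'$; the variance should be matched to the convention in Definition~\ref{def:GenGroth}. The component of the induced transformation at $(\Sigma,F)$ is $\eta_{\Sigma,F}\colon p\mapsto L(p)$, which is a bijection by Proposition~\ref{pro:emp=sub}. Naturality along a morphism $(\pi,\alpha)$ factors into two squares. The first is the pullback square $\Esubb_\pi\circ\eta_{\Sigma}=\eta_{\Sigma'}\circ\Emp_{\BB,\pi}$, which was verified in the proof of Proposition~\ref{pro:emp=sub}. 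The second is the square for $\alpha$ inside the fibre over $\Sigma'$, which holds because each $\eta_{\Sigma'}$ is a natural transformation $\Emp_{\BB,\Sigma'}\to\Esubb_{\Sigma'}$. Pasting the two squares gives naturality for the composite morphism.

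The only point requiring care is bookkeeping: the morphism-level action of the integrated functor must be the composite of the reindexing map and the fibrewise map, in the same order on both sides. This is guaranteed because Proposition~\ref{pro:emp=sub} establishes commutativity in $\catCat\slice\catSet$, so the result follows from Proposition~\ref{pro:NatNat} without further computation.
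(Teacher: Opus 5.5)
Your proposal is correct and takes the same approach as the paper: the theorem follows directly from the natural isomorphism $\eta$ of Proposition~\ref{pro:emp=sub} combined with the functoriality of the relative Grothendieck construction (Proposition~\ref{pro:NatNat}). Your splitting of the naturality square into the reindexing square along $\pi$ and the fibrewise square along $\alpha$ is just that functoriality written out explicitly.
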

\begin{proof}
Directly by Propositions \ref{pro:emp=sub} and \ref{pro:NatNat}.
\end{proof}

% 

%\newpage

\section{{Sufficient conditions for extremality}}
 
\label{sec:Sufficient conditions for extremality}

In this section, we develop methods for studying the extremal points of simplicial distributions. {We do so using the characterization of supports of distributions established in Section~\ref{sec:Possibilistic}.} We also introduce mild finiteness conditions on the {domain and codomain} of simplicial maps $f\colon E\to X$ that ensure that the set of simplicial distributions has finitely many extremal points.

{A \emph{polytope in standard form} is a bounded subset of $\mathbb{R}^n$ of the form
$$
\{x\in\mathbb{R}^n \mid~ Ax=b,\; x\geq 0\},
$$
where $A$ is a matrix and $b$ is a vector.}
\begin{pro}\label{pro:polytope of sdist}
Let $X$ be a simplicial set with finitely many non-degenerate simplices, and let $f\colon E\to X$ be a simplicial map with finite fibers, i.e., $|f^{-1}(x)|<\infty$ for every $x\in X_n$, $n\geq 0$. Then the set of simplicial distributions $\on{sDist}(f)$ is a polytope {in standard form.}
\end{pro}
\begin{proof}
{By Definition~\ref{def:simp-dist}, the set of simplicial distributions on $f$ is given by the pullback}
$$
\begin{tikzcd}[column sep=huge,row sep=large]
\sDist(f) \arrow[r,hook] \arrow[d,""'] & \sDist(X,E) 
\arrow[d,"f^\ast"] \\
\set{\delta_X} \arrow[r,hook] & \sDist(X,X)
\end{tikzcd}
$$
{See Example \ref{ex:first examples}. By \cite[Proposition~3.6]{kharoof2026vertex}, the sets $\sDist(X,E)$ and $\sDist(X,X)$ are polytopes in standard form. Since $\{\delta_X\}$ is a singleton, it is also a polytope in standard form. Moreover, the map
$f^\ast$
is affine. Therefore, $\sDist(f)$ is a pullback of polytopes in standard form along affine maps. Since pullbacks of polytopes in standard form are again polytopes in standard form, it follows that $\sDist(f)$ is a polytope in standard form.}
\end{proof}
%\begin{defn}\label{def:vertex}
%An element $v$ of a convex set $V$ is called a \emph{vertex} or \emph{extremal point} if, for every $0<t<1$ and elements $v_1,v_2 \in V$ such that 
%$v=t\cdot v_1 + (1-t)\cdot v_2$, we have $v=v_1=v_2$.   
%\end{defn}

%\coc{for non-neg. reals prime filter is the same as a face.}

\begin{defn}\label{def:vertex}
Let $V$ be a convex set. An element $v\in V$ is called a \emph{vertex} or \emph{extremal point} if, for every $0<t<1$ and elements $v_1,v_2 \in V$ such that 
$v=t v_1 + (1-t)v_2$, we have $v=v_1=v_2$.

A convex subset $U\subseteq V$ is called a \emph{face} of $V$ if, whenever a convex combination
$
\sum_{i=1}^n \alpha_i v_i
$
with $v_i\in V$, $\alpha_i>0$, and $\sum_i\alpha_i=1$, lies in $U$, then $v_i\in U$ for every $i$.
\end{defn}
 
{\begin{pro}\label{pro:vertexfilter}
Let $U$ be a face in a convex set $V$. An element $v\in U$ is a vertex in $V$ if and only 
if it is a vertex in $U$.
\end{pro}
\begin{proof}
If $v \in U$ is a vertex in $V$, then obviously it is a vertex in $U$. Now, suppose that $v$ is a 
vertex in $U$. If there is $v_1,v_2 \in V$ and $0<\alpha<1$ such that $v=\alpha v_1+(1-\alpha)v_2$, then 
$v_1,v_2 \in U$ since
 $U$ is a face. So $v_1=v_2=v$ since $v$ is a vertex in $U$.  
\end{proof}
} 
%\coc{introduce deterministic simplicial distributions in the first section} 
 
There are two kinds of vertices of $\on{sDist}(f)$: deterministic simplicial distributions, which are well known to be vertices, and the remaining vertices, which are contextual simplicial distributions. Our focus in this section is on the \emph{contextual vertices} of $\on{sDist}(f)$.

\subsection{Topological {condition}}\label{subsec:topcharac}

For a distribution $P\in D(U)$ {on a set $U$}, we write $\on{supp}(P)$ for its \emph{support}, i.e., the subset $\{x\in U\mid~ P(x)\neq 0\}$.

%\coc{$q_x^e$ vs $q_x(e)$}

\begin{defn}\label{def:preordersDist}
%Let \(f\colon E \to X\) be a pre-bundle scenario.  
We define a preorder \(\preceq\) on \(\sDist_R(f)\) by declaring that \(q \preceq p\) if, for every \(x \in X_n\) and every \(e \in E_n\),
\[
\on{supp}\left(q_x(e)\right) \subseteq \on{supp}\left(p_x(e)\right).
\]
%\[
%q_x^e \neq 0 \;\; \Rightarrow \;\; p_x^e \neq 0.
%\]
{We will use the notation $x \prec y$ to denote that $x \preceq y$ and $x \neq y$.} We denote by \(p_{\preceq}\) the set of all simplicial distributions \(q \in \sDist_R(f)\) such that \(q \preceq p\). 
\end{defn}

\begin{pro}\label{pro:preservation of preorder}
Let $f\colon E\to X$ be a simplicial map. 
\begin{enumerate}
\item The possibilistic collapse map
$$
\kappa_{f}\colon \sDist(f) \to \sDist_{\BB}(f).
$$
induced by the map in \eqref{eq:piX} {satisfies  
$$
q \preceq p \quad \Longleftrightarrow \quad \kappa_f(q) \preceq  \kappa_f(p).
$$}
\item The $\zeta_f$ isomorphism of \eqref{eq:zetaXfff} satisfies
\[
q \preceq p \quad \Longleftrightarrow \quad \zeta_f(q) \subseteq \zeta_f(p).  
\]
\end{enumerate}
See Equation (\ref{eq:zetaxf}).
\end{pro}

\begin{pro}\label{pro:simprvertmin}
A simplicial distribution $p \in \sDist(f)$ is a vertex if and only if $p$ is minimal with respect to the preorder $\preceq$.
\end{pro}
\begin{proof}
{This follows from Proposition~\ref{pro:polytope of sdist} and \cite[Corollary~2.11]{kharoof2026vertex}.}
\end{proof}

{We show that extremal simplicial distributions are determined by their corresponding possibilistic simplicial distributions, a {general} fact {about probability polytopes} previously observed in \cite{abramsky2016possibilities}.}

\begin{cor}\label{cor:pvertexifandsimplicialAAAAA}
%Given a 
%pre-bundle scenario 
%{simplicial map}
%$f$, 
A simplicial distribution $p\in \sDist(f)$ is a vertex if and only if $\kappa_f^{-1}(\kappa_f(p))=\set{p}$. 
\end{cor}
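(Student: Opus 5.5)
The plan is to derive the corollary directly from Proposition~\ref{pro:simprvertmin}, which says $p$ is a vertex if and only if it is minimal for $\preceq$. The bridge is Proposition~\ref{pro:preservation of preorder}(1): $q\preceq p$ holds exactly when $\kappa_f(q)\preceq\kappa_f(p)$. Consequently, $q\preceq p$ and $p\preceq q$ together are equivalent to $\kappa_f(q)=\kappa_f(p)$. Here we use that $\preceq$ on $\sDist_\BB(f)$ is a genuine partial order, since for $\BB$-valued distributions the support determines the distribution. Equivalently, via $\zeta_f$ it is inclusion of sub-bundle scenarios. In words, the fibre $\kappa_f^{-1}(\kappa_f(p))$ is exactly the set of $q$ with the same support as $p$ everywhere.

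For the forward direction, suppose $p$ is a vertex and $q\in\kappa_f^{-1}(\kappa_f(p))$. Then $q\preceq p$, so minimality of $p$ gives $p\preceq q$, i.e.\ equal supports. This alone does not force $q=p$, because $\preceq$ is only a preorder on $\sDist(f)$. So instead I would argue directly from convexity. If $q\neq p$ have the same support, consider $r_t=p+t(p-q)$ for small $t>0$. At each $x$, the distribution $r_t$ is supported in $\on{supp}(p_x)$. That support is a finite set on which $p_x$ is strictly positive, so $r_t\geq 0$ for $t$ small enough. We also need a single $t$ that works for all $x$. This follows from the finiteness hypotheses of Proposition~\ref{pro:polytope of sdist}: only finitely many non-degenerate simplices matter, and degenerate ones are determined by simpliciality. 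Each $r_t$ is simplicial and satisfies $D(f)\circ r_t=\delta_X$, because these are affine conditions. Then $p=\tfrac{1}{1+t}r_t+\tfrac{t}{1+t}q$ exhibits $p$ as a nontrivial convex combination, contradicting that $p$ is a vertex. This uniform choice of $t$ is the main obstacle, and I would settle it with the polytope-in-standard-form description. Alternatively one can avoid it: a vertex of a polytope in standard form is the unique point of the polytope with its support, by the standard LP fact underlying \cite[Corollary~2.11]{kharoof2026vertex}.

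For the converse, assume $\kappa_f^{-1}(\kappa_f(p))=\{p\}$ and suppose $p=tv_1+(1-t)v_2$ with $0<t<1$. Nonnegativity gives $\on{supp}(v_{i,x})\subseteq\on{supp}(p_x)$, so $v_i\preceq p$. Then $w=\tfrac12(p+v_1)$ has support equal to that of $p$, so $\kappa_f(w)=\kappa_f(p)$, and hence $w=p$. This forces $v_1=p$, and similarly $v_2=p$, so $p$ is a vertex. This direction requires no finiteness.
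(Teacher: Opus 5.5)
Your proof is correct, but your forward direction takes a longer route than the paper's. In the paper, ``minimal'' in Proposition~\ref{pro:simprvertmin} is used in the strong sense that $q\preceq p$ forces $q=p$; this is how the paper's own proof applies it, and it is what \cite[Corollary~2.11]{kharoof2026vertex} supplies for polytopes in standard form. With that reading, the forward direction is a single line. Any $p'$ in the fibre satisfies $\kappa_f(p')\preceq\kappa_f(p)$, hence $p'\preceq p$ by Proposition~\ref{pro:preservation of preorder}(1), hence $p'=p$. The paper in fact obtains the slightly stronger statement $\kappa_f^{-1}(\kappa_f(p)_{\preceq})=\{p\}$. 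You read minimality in the weak preorder sense, correctly note that this alone would not suffice, and then reprove the needed support-determination fact via the perturbation $r_t=p+t(p-q)$. That argument is sound: simpliciality and $D(f)\circ r_t=\delta_X$ are affine conditions, and nonnegativity needs checking only on the finitely many non-degenerate simplices, because pushforwards preserve nonnegativity. Its merit is that it makes explicit where the finiteness of $X$ enters, which the paper leaves inside the cited result; the cost is length, and your step ``minimality gives $p\preceq q$'' is vacuous, since $q$ already shares the support of $p$. For the converse, both arguments use the same midpoint trick $\tfrac12(p+p')$. The paper applies it to an arbitrary $p'\preceq p$ to get minimality and then appeals to Proposition~\ref{pro:simprvertmin}. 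You apply it to a component of a convex decomposition and conclude extremality directly, which, as you note, needs no finiteness hypothesis.
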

\begin{proof}
{Suppose that $p$ is a vertex. If $p' \in \kappa_f^{-1}(\kappa_f(p)_{\preceq})$, then by part (1) of Proposition \ref{pro:preservation of preorder} we get that $p'\preceq p$. By Proposition \ref{pro:simprvertmin} we obtain $p'=p$. We proved that $\kappa_f^{-1}(\kappa_f(p)_{\preceq})=\set{p}$, so $\kappa_f^{-1}(\kappa_f(p))=\set{p}$. Now, suppose that $\kappa_f^{-1}(\kappa_f(p))=\set{p}$. Let $p' \preceq p$. Note that 
$\kappa_f(\frac{1}{2}p+\frac{1}{2}p')=\kappa_f(p)$. In other words, $\frac{1}{2}p+\frac{1}{2}p'\in \kappa_f^{-1}(\kappa_f(p))$, so 
$\frac{1}{2}p+\frac{1}{2}p'=p$, which implies that $p'=p$.}
\end{proof}

\begin{pro}\label{pro:identification of preimage kappa}
For a possibilistic simplicial distribution $p\in \on{sDist}_{\BB}(f)$, the subset $\kappa_f^{-1}(p_{\preceq})$ is a face of $\on{sDist}(f)$ and is affinely isomorphic to $\sDist({f|_{\zeta_f(p)}})$. 
\end{pro}
\begin{proof}
{Given $q,s\in \sDist(f)$ and $0<\alpha<1$ such that $\alpha q +(1-\alpha)s \in \kappa_f^{-1}(p_{\preceq})$. That means
$\alpha \kappa_f(q) +(1-\alpha)\kappa_f(s)=\kappa_f(\alpha q +(1-\alpha)s)\preceq p$.
Since $\alpha>0$, we get that $\kappa_f(q) \preceq p$, and 
since $1-\alpha>0$, we get that $\kappa_f(s) \preceq p$. So $q, s \in \kappa_f^{-1}(p_{\preceq})$. We proved that $\kappa_f^{-1}(p_{\preceq})$ is a face.

Now, note that for 
$q \in \sDist(f)$, we have $q \in \sDist(f|_{\zeta_f(p)})$ if and only if
for every $x\in X_n$ and $e \in f_n^{-1}(x)$ the condition
$q_{x}(e)\neq 0$ implies that $e \in \zeta_f(p)_n$, which means that $p_x(e)=1$. 
This is equivalent to say that $\kappa_f(q) \preceq p$. In other words, $q \in \kappa_{f}^{-1}(p_{\preceq})$.}
\end{proof}

\begin{pro}\label{pro:vertex iff does not lift}
A simplicial distribution $p\in \on{sDist}(f)$ is a vertex if and only if every possibilistic simplicial distribution 
$q\prec \kappa_f(p)$ does not lie in the image of $\kappa_f$.
\end{pro}

\begin{proof}
{Suppose first that $p$ is a vertex. If there exists $p' \in \sDist(f)$ such that
$\kappa_f(p') \prec \kappa_f(p)$, then by part (1) of Proposition \ref{pro:preservation of preorder}, we get that $p' \prec p$. This contradicts
Proposition~\ref{pro:simprvertmin}.

Conversely, assume that every $q \prec \kappa_f(p)$ is not liftable. Then the face $\kappa_f^{-1}(\kappa_f(p)_{\preceq})$ (see Proposition 
\ref{pro:identification of preimage kappa}) is equal to $\kappa_f^{-1}(\kappa_f(p))$.
Let $p'$ be a vertex in the face $\kappa_f^{-1}(\kappa_f(p))$. By  Proposition \ref{pro:vertexfilter}, $p'$ is  a vertex in $\Emp(f)$, and hence,
by Corollary~\ref{cor:pvertexifandsimplicialAAAAA}, we have
$$
\set{p'} = \kappa_f^{-1}(\kappa_f(p')).
$$
Meanwhile, $\kappa_f(p') = \kappa_f(p)$, so it follows that
$$
\kappa_f^{-1}(\kappa_f(p')) = \kappa_f^{-1}(\kappa_f(p)).
$$
Therefore, $p' = p$, and we conclude that $p$ is a vertex.}
\end{proof}

{
\begin{cor}\label{cor:minisvertexsimpversion}
If $\kappa_f(p)$ is minimal, then $p$ is a vertex.    
\end{cor}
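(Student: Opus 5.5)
This follows directly from Proposition~\ref{pro:vertex iff does not lift}. Minimality of $\kappa_f(p)$ is read with respect to the preorder $\preceq$ on $\sDist_{\BB}(f)$. It means that there is no possibilistic simplicial distribution $q\in\sDist_{\BB}(f)$ with $q\prec\kappa_f(p)$. By the second part of Proposition~\ref{pro:preservation of preorder}, this is the same as saying that the bundle scenario $\zeta_f(\kappa_f(p))$ has no proper sub-bundle scenario with finite fibers in $\on{Sub}_X(f)$.

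The first step is to check that the criterion of Proposition~\ref{pro:vertex iff does not lift} holds vacuously. That proposition states that $p$ is a vertex if and only if every $q\prec\kappa_f(p)$ lies outside the image of $\kappa_f$. Minimality says that the set of such $q$ is empty. Hence the condition is satisfied trivially, and $p$ is a vertex.

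As a sanity check, and as an alternative route, one can argue directly through Proposition~\ref{pro:simprvertmin}. Let $p'\in\sDist(f)$ with $p'\preceq p$. By part (1) of Proposition~\ref{pro:preservation of preorder}, $\kappa_f(p')\preceq\kappa_f(p)$, and minimality forces $\kappa_f(p')=\kappa_f(p)$. We then need to conclude $p'=p$. This is where the route through Proposition~\ref{pro:vertex iff does not lift} does the real work: it produces a vertex $p''$ of the face $\kappa_f^{-1}(\kappa_f(p))$ and uses Corollary~\ref{cor:pvertexifandsimplicialAAAAA} to identify that fiber with $\{p''\}$. The only step needing care is to read "minimal" as strict minimality in the preorder, i.e.\ no $q\prec\kappa_f(p)$. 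On the possibilistic side $\preceq$ is a genuine partial order, since it corresponds to inclusion of subsets by Proposition~\ref{pro:preservation of preorder}(2). There is therefore no ambiguity between minimality in the preorder and in the order, and the corollary follows immediately.
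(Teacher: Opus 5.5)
Your argument is correct and is the paper's intended one. The paper states this corollary without proof immediately after Proposition~\ref{pro:vertex iff does not lift}. Minimality of $\kappa_f(p)$, meaning there is no $q\prec\kappa_f(p)$ in $\on{sDist}_{\BB}(f)$, makes that proposition's criterion hold vacuously, exactly as in your first paragraph.

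Your ``alternative route'' paragraph is not an independent argument, since it hands the key step $p'=p$ back to the proof of the same proposition. It can simply be dropped.
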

}

Next, we introduce the connectivity notion for simplicial maps that will serve as our main criterion for detecting extremal simplicial distributions.

\begin{defn}\label{def:strong-connectivity}
Let $f\colon E\to X$ be a simplicial map. Consider two simplices
$e_1\in E_{n_1}$, $e_2\in E_{n_2}$, {and a simplex $x\in X_{m}$}. We write $e_1\sim_x e_2$ if
there exist injective ordinal maps
\[
\theta_1\colon {[m]}\to [n_1],
\qquad
\theta_2\colon {[m]}\to [n_2]
\]
such that
\[
\theta_1^\ast(e_1)=\theta_2^\ast(e_2) \in f_m^{-1}(x),
\]
and the following lifting problems admit unique solutions:
\begin{equation}\label{dia:e_1xe_2}
\begin{tikzcd}[column sep=huge, row sep=large]
\Delta{[m]} \arrow[d,hook,"\theta_1"] \arrow[r,"\theta_1^\ast(e_1)"] & E \arrow[d,"f"]
&
\Delta{[m]} \arrow[d,hook,"\theta_2"] \arrow[r,"\theta_2^\ast(e_2)"] & E \arrow[d,"f"] \\
\Delta[n_1] \arrow[ru,dashed] \arrow[r,"f_{n_1}(e_1)"] & X
&
\Delta[n_2]\arrow[ru,dashed] \arrow[r,"f_{n_2}(e_2)"] & X .
\end{tikzcd}
\end{equation}
The unique solutions are required to be the simplicial maps
\[
\Delta[n_1]\xrightarrow{\,e_1\,}E
\qquad\text{and}\qquad
\Delta[n_2]\xrightarrow{\,e_2\,}E.
\]
We say that $e_1$ and $e_2$ are \emph{$f$-strongly connected} if
there exist simplices $e'_1,\dots,e'_k$ of $E$ {and simplices $x_1,\dots,x_{k+1}$ of $X$},  such that
\[
e_1\sim_{x_1} e'_1\sim_{x_2} \cdots \sim_{x_k} e'_k\sim_{x_{k+1}} e_2.
\]
This defines an equivalence relation, denoted by $\sim_f$.
We say that $f$ is \emph{strongly connected} if every pair of
generator simplices of $E$ is $f$-strongly connected.
\end{defn}

\begin{pro}\label{pro:uniqeextension}
Let $p$ be a simplicial distribution on 
%a pre-bundle scenario
a simplicial map
$f\colon E \to X$, and 
$E' := \zeta_{f}(\kappa_f(p))$ denote the bundle scenario of the possibilistic collapse.
%(see Equation~\eqref{eq:zetaxf}). 
Given $e_1\in E'_{n_1}$ and $e_2\in E'_{n_2}$, if
$e_1 \sim_{f|_{E'}} e_2$, then
$$
p_{f_{n_1}(e_1)}(e_1)=p_{f_{n_2}(e_2)}(e_2).
$$
\end{pro}

\begin{proof}
It is enough to prove in the case that $e_1 \sim_x e_2$. Suppose there exist {injective} ordinal maps $\theta_1\colon  [m]\to [n_1]$ and 
$\theta_2\colon  [m]\to [n_2]$ such that
$
\theta_1^\ast(e_1)
=
\theta_2^\ast(e_2)
$, and the following squares admit unique liftings:
$$
\begin{tikzcd}[column sep=huge, row sep=large]
\Delta[m] \arrow[d,hook,"\theta_1"] \arrow[r,"\theta_1^\ast(e_1)"] & E' \arrow[d,"f|_{E'}"]
&
\Delta[m] \arrow[d,hook,"\theta_2"] \arrow[r,"\theta_2^\ast(e_2)"] & E' \arrow[d,"f|_{E'}"] \\
\Delta[n_1] \arrow[r,"f_{n_1}(e_1)"] & X
&
\Delta[n_2] \arrow[r,"f_{n_2}(e_2)"] & X
\end{tikzcd}
$$
Denote $\sigma_i := f_{n_i}(e_i)$. By the uniqueness of lifting in the left square and since
$\zeta_{f}(\kappa_f(p))=E'$, we obtain
$$
p_{\theta^\ast_1({\sigma_1})}(\theta^\ast_1(e_1))
= D(\theta^\ast_1)(p_{\sigma_1})(\theta^\ast_1(e_1))
= \sum_{\substack{e' \in E'_{n_1} \\ \theta_1^\ast(e') =  \theta_1^\ast(e) \\ f_{n_1}(e')=\sigma_1 }} p_{\sigma_1}(e')
= p_{\sigma_1}(e_1).
$$
Similarly, we have $
p_{\theta^\ast_2(\sigma_2)}(\theta^\ast_2(e_2))
= p_{\sigma_2}(e_2)$. Note that 
%$\theta_1^\ast(e_1)
%=
%\theta_2^\ast(e_2)$ 
%implies that 
$$
\theta^\ast_1({\sigma_1})=\theta_1^\ast(f_{n_1}(e_1))
=x=\theta_2^\ast(f_{n_2}(e_2))=
\theta^\ast_2({\sigma_2}).
$$ 
So we get that $p_{\sigma_1}(e_1)=p_{\sigma_2}(e_2)$.
\end{proof}
\begin{thm}\label{thm:strongisvertx}
Let $p$ be a simplicial distribution on a simplicial map
$f\colon E\to X$, and let
\[
g := f|_{\zeta_f(\kappa_f(p))}.
\]
If $g$ is strongly connected, then $p$ is a vertex of $\sDist(f)$.
\end{thm}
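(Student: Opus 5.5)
By Corollary~\ref{cor:pvertexifandsimplicialAAAAA}, it suffices to show that $\kappa_f^{-1}(\kappa_f(p))=\{p\}$. Equivalently, by Proposition~\ref{pro:simprvertmin}, we can show that any $q\preceq p$ equals $p$. I will take the second route, since $q\preceq p$ is easy to translate into the support language.

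Set $E':=\zeta_f(\kappa_f(p))$ and $g=f|_{E'}$. Let $q\in\sDist(f)$ with $q\preceq p$. By Proposition~\ref{pro:identification of preimage kappa} and Proposition~\ref{pro:preservation of preorder}, $q$ is then a simplicial distribution on $g$. Note that $\zeta_f(\kappa_f(q))=E''\subseteq E'$ is itself a bundle scenario. The key point is that Proposition~\ref{pro:uniqeextension} uses only the following facts about its input: the distribution is supported inside $E'$, and the lifting problems for $g$ have unique solutions $e_i$. I would therefore re-run that argument for $q$ in place of $p$. Lifts through $E''$ are in particular lifts through $E'$, so uniqueness persists and the relevant sum collapses to the single term. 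Hence for $e_1\sim_x e_2$ in $E'$ we get $q_{\sigma_1}(e_1)=q_{\theta_1^\ast\sigma_1}(\theta_1^\ast e_1)=q_{\sigma_2}(e_2)$. This works even if some of these values are zero, since the sum over lifts in $E'$ still has only one term, possibly zero. Strong connectivity of $g$ then implies that $q_{f(e)}(e)$ takes a common value $c_q$ on all generator simplices $e$ of $E'$. Likewise $p_{f(e)}(e)=c_p$, and $c_p>0$ since every simplex of $E'$ lies in the support of $p$.

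Next I would show that this constant determines the distribution entirely. Every simplex of $E'$ is an iterated degeneracy-then-face image of a generator; more precisely, it is of the form $s^\ast(e)$ with $e$ nondegenerate, and nondegenerate simplices are faces of generators. For a degenerate simplex, the formula $q_{s_i x}(s_i e)=q_x(e)$ (as in the proof of Proposition~\ref{pro:isomorEventBundle}, using injectivity of $s_i$) reduces to the nondegenerate case. For a face $e=\theta^\ast(e_0)$ of a generator $e_0$, I want the face to be related to $e_0$ by $\sim$. The cleanest approach is to note that $e\sim_{f(e)} e_0$ whenever the lifting of $e$ along $\theta$ to $g$ is unique; in that case, as before, $q_{f(e)}(e)=q_{f(e_0)}(e_0)=c_q$. \textbf{This is the step I expect to be the main obstacle}: for faces of generators whose lift is not unique, the value is instead a sum over several lifts. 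To handle it, I would use the compatibility $q_{f(e)}(e)=\sum_{e'}q_{\sigma}(e')$ over all lifts $e'\in E'$ of $e$ over a fixed generator base $\sigma$ above $f(e)$. Each such lift is either a generator or, in turn, a face of one, so by induction on codimension the value equals (number of generator lifts)$\cdot c_q$. The same expression holds for $p$ with $c_p$. Hence $q=(c_q/c_p)\,p$ on every simplex of $E'$, and $q$ vanishes outside $E'$.

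Finally, both $p$ and $q$ are probability distributions on each fiber, so normalization over any fiber forces $c_q/c_p=1$. Therefore $q=p$, so $p$ is minimal, and hence a vertex by Proposition~\ref{pro:simprvertmin}.
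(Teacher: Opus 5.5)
Your proof is correct. Its core matches the paper's: unique-lift propagation along $\sim$ together with strong connectivity gives a constant value on the generators of $E'=\zeta_f(\kappa_f(p))$, and normalization over a fiber fixes that constant. The packaging differs.

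The paper compares $p$ only with distributions $p'$ satisfying $\kappa_f(p')=\kappa_f(p)$. These have the same support $E'$, so Proposition~\ref{pro:uniqeextension} applies to $p'$ verbatim. It yields the value $1/|g_n^{-1}(\sigma)|$ on every simplex of $E'$ over a generator $\sigma$ of $X$, hence $p'=p$, and Corollary~\ref{cor:pvertexifandsimplicialAAAAA} finishes. The passage from $\preceq$ to equal supports is hidden in that corollary's midpoint trick with $\tfrac12 p+\tfrac12 p'$.

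You instead test minimality directly against every $q\preceq p$, whose support may be strictly smaller than $E'$. So you need a mild strengthening of Proposition~\ref{pro:uniqeextension} that only assumes the support lies inside $E'$. Your observation that uniqueness of lifts in $E'$ still collapses the marginal sum to a single, possibly zero, term is exactly what makes this work. Your route buys the explicit statement $\on{sDist}(g)=\{p\}$. This is the probabilistic counterpart of the paper's later remark that strong connectivity forces $\on{Sub}_X(g)=\{E'\}$.

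The step you flag as the main obstacle is unnecessary. If $\sigma$ is a generator of $X$, every simplex of $E'$ over $\sigma$ is already a generator of $E'$. The reason is that a simplex of $E'$ over a degenerate simplex is itself degenerate, since $g$ is discrete over vertices. Normalization on $g_n^{-1}(\sigma)$ then gives $c_q=c_p=1/|g_n^{-1}(\sigma)|$ immediately. Since $p$ and $q$ are simplicial maps $X\to D(E)$, they are determined by their values on the generators of $X$. You therefore need no counting of non-unique lifts and no induction on codimension, though your counting argument is not wrong.
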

\begin{proof}
By Corollary~\ref{cor:gentogen} and Proposition~\ref{pro:uniqeextension}, there exists a unique value $t$ such that
$p_{f_n(e)}(e)=t$ whenever $p_{f_n(e)}(e)\neq 0$.
For a generator simplex $\sigma\in X_n$ and $e \in f_n^{-1}(\sigma)$, the simplex $e$ is also a generator.
Since
$$
\sum_{e \in f_n^{-1}(\sigma)} p_{\sigma}(e)=\sum_{e \in g_n^{-1}(\sigma)} p_{\sigma}(e)=1,
$$
we conclude that $t=\frac{1}{|g_n^{-1}(\sigma)|}$.
Thus, $p$ is uniquely determined by its Boolean 
%shadow 
collapse
under $\kappa_f$. Hence,
by Corollary~\ref{cor:pvertexifandsimplicialAAAAA}, $p$ is a vertex of $\sDist(f)$.
\end{proof}

{We now give an alternative explanation of the theorem above in terms of minimality.

\begin{pro}\label{pro:fs.cimpliessub=E}
If $f \colon E\to X$ is strongly connected, then $\subb_X(f)=\set{E}$.    
\end{pro}
}
\begin{proof}
{
Let $E'\in \subb_X(f)$. We first show that if
$\theta^\ast(e)\in E'_m$ for some ordinal map $\theta\colon [m]\to [n]$ and some simplex
$e\in E_n$, then $e\in E'_n$. Indeed, since $f|_{E'}$ is a bundle scenario, the following lifting problem admits a solution:
}
$$
\begin{tikzcd}[column sep=huge,row sep=large]
\Delta[m] \arrow[r,"\theta^\ast(e)"] \arrow[d,"\theta"'] & E' \arrow[d,"f|_{E'}"] \\
\Delta[n] \arrow[r,"f_n(e)"]  \arrow[ru,"{e'}"] & X 
\end{tikzcd}
$$
{Since $f$ is strongly connected, the lift is unique, and therefore $e'=e$. In particular $e \in E'_n$. As a result, there exist a generator $e_1$ of $E$ that belongs to $E'$. Now let $e_2$ be another generator of $E$ such that $e_1 \sim_x e_2$. Then, we have Diagrams (\ref{dia:e_1xe_2}) where 
$$
\theta_2^\ast(e_2)=
\theta_1^\ast(e_1) \in E'_{m}.
$$ 
By the argument above, it follows that $e_2$ is in $E'$. Since $f$ is strongly connected, every generator of $E$ is equivalent to $e_1$ under the relaion $\sim_f$. Hence every generator of $E$ belongs to $E'$. Thus $E'=E$. }
\end{proof}
{
\begin{cor}\label{cor:s.cimpliesmin}
Let $f\colon E\to X$ be a simplicial map, and let $E'\in \Sub_X(f)$.
If the restricted map $g=f|_{E'}$
is strongly connected, then $E'$ is minimal in $\Sub_X(f)$.
\end{cor}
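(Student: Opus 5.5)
We deduce this from Proposition~\ref{pro:fs.cimpliessub=E}, applied to the restricted map $g=f|_{E'}\colon E'\to X$. Minimality of $E'$ in $\Sub_X(f)$ means: if $E''\in\Sub_X(f)$ and $E''\subseteq E'$, then $E''=E'$. Fix such an $E''$.

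The first step is to observe that $E''\in\Sub_X(g)$. By definition, $E''$ is a simplicial subset of $E$ with $f|_{E''}$ a bundle scenario with finite fibers. Since $E''\subseteq E'$, it is also a simplicial subset of $E'$. Moreover, $g|_{E''}=(f|_{E'})|_{E''}=f|_{E''}$. So the defining conditions of Definition~\ref{def:subXXXX}, namely surjectivity, the lifting property against all ordinal maps $\theta\colon\Delta[n]\to\Delta[m]$, and finiteness of fibers, hold verbatim for $g|_{E''}$. This gives $E''\in\Sub_X(g)$.

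The second step applies Proposition~\ref{pro:fs.cimpliessub=E} to the strongly connected map $g$. It yields $\Sub_X(g)=\{E'\}$, hence $E''=E'$, which proves minimality.

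The only point needing care is that Proposition~\ref{pro:fs.cimpliessub=E} is used with $g\colon E'\to X$ as the ambient map, so strong connectivity must be read relative to $g$. That is, the unique-lifting squares of Definition~\ref{def:strong-connectivity} are taken in $E'$ and the generators are those of $E'$, which is exactly what the hypothesis provides. No compatibility between strong connectivity of $g$ and any property of $f$ is required, since the whole argument takes place inside $E'$.
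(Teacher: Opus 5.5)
Your proposal is correct and is exactly the argument the paper intends. The paper states the corollary without a separate proof, as an immediate consequence of Proposition~\ref{pro:fs.cimpliessub=E} applied to $g\colon E'\to X$, using that any $E''\in\on{Sub}_X(f)$ with $E''\subseteq E'$ lies in $\on{Sub}_X(g)$ because $g|_{E''}=f|_{E''}$; your remark that strong connectivity must be read relative to $g$ (lifts in $E'$, generators of $E'$) is the right point of care.
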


As a consequence, if $p \in \sDist(f)$
such that $f|_{\zeta_f(\kappa_f(p))}$
is strongly connected, then 
$\zeta_f(\kappa_f(p))$ is minimal in $\Sub_X(f)$. By part~(2) of Proposition~\ref{pro:preservation of preorder}, it follows that
$\kappa_f(p)$
is minimal in $\sDist_{\BB}(f)$. Therefore, Corollary~\ref{cor:minisvertexsimpversion} implies that $p$ is a vertex.
}
%{===} 

\subsection{Categorical {condition}}
 
Next, we {provide a categorical criterion for detecting} extremal empirical models in terms of their associated event scenarios.

%\coc{notation $q_\sigma^a$} 

\begin{defn}\label{def:preorder}
%Let \(F \in {\on{PSh}(\Sigma)}\).
% be a pre-event scenario. 
We define a preorder \(\preceq\) on {\(\Emp_R(F)\)} by declaring that \(q \preceq p\) if, for every \(\sigma \in \Sigma\) and every \(a \in F(\sigma)\),
\[
\on{supp}(q_{\sigma}(a)) \subset \on{supp}(p_{\sigma}(a)).
\]
We denote by \(p_{\preceq}\) the set of all empirical models \(q \in {\Emp_R(F)}\) such that \(q \preceq p\).
\end{defn}

\begin{pro}\label{pro:preservation of preorder on empirical}
Let $F$ be a preseheaf on a simplicial complex $\Sigma$. 
\begin{enumerate}
\item The possibilistic collapse map
$$
\kappa_{f}\colon \Emp(F) \to \Emp_{\BB}(F).
$$
induced by the map in \eqref{eq:piX}  {satisfies  
$$
q \preceq p \quad \Longleftrightarrow \quad \kappa_F(q) \preceq  \kappa_F(p).
$$}
\item The isomorphism $\eta_F$ of \eqref{eq:isomorphismL} satisfies
\[
p \preceq q \quad \Longleftrightarrow \quad \eta_F(p) \leq \eta_F(q).  
\]
\item The isomorphism $\Phi_F$ of Proposition \ref{pro:isomorEventBundle} preserves the preorder:
\[
p \preceq q \quad \Longleftrightarrow \quad \Phi_F(p) \preceq \Phi_F(q).  
\]
\end{enumerate}
\end{pro}

\begin{pro}\label{pro:emprvertmin}
An empirical model  
$p \in \Emp(F)$ is a vertex if and only if $p$ is minimal with respect to the preorder $\preceq$.
\end{pro} 
\begin{proof}
{By Proposition~\ref{pro:isomorEventBundle}, $p$ is a vertex of $\Emp(F)$ if and only if $\Phi_F(p)$ is a vertex of $\sDist(f_F)$. 
By Proposition~\ref{pro:simprvertmin}, this holds if and only if $\Phi_F(p)$ is minimal in $\sDist(f_F)$ with respect to the preorder $\preceq$. 
Finally, by part~(3) of Proposition~\ref{pro:preservation of preorder on empirical}, this is equivalent to $p$ being minimal in $\Emp(F)$ with respect to the preorder $\preceq$.}
\end{proof}

{Now, in order to prove that extremal empirical models are determined by their corresponding possibilistic empirical models, we will use the
analogous results established in Section~\ref{subsec:topcharac}. To this end, we consider the following commutative diagram:}
\begin{equation}\label{dia:callpsingfromemptosDist}
\begin{tikzcd}
\Emp(F) \arrow[rr,"\Phi_{\RR_{\geq 0},F}","\cong"'] \arrow[dd,"\kappa_{F}"'] && \sDist(f_F) 
\arrow[dd,"\kappa_{f_F}"] \\
&& \\
\Emp_{\BB}(F) \arrow[rr,"\Phi_{\BB,F}","\cong"'] && \sDist_{\BB}(f_F) 
\end{tikzcd}
\end{equation}
\begin{cor}\label{cor:pvertexifand} 
An empirical model 
$p\in \Emp(F)$ is a vertex if and only if $\kappa_F^{-1}(\kappa_F(p))=\set{p}$.    
\end{cor}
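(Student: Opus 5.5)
The plan is to transport Corollary~\ref{cor:pvertexifandsimplicialAAAAA} along the commutative square~\eqref{dia:callpsingfromemptosDist}, whose horizontal maps $\Phi_{\RR_{\geq 0},F}$ and $\Phi_{\BB,F}$ are the isomorphisms of Proposition~\ref{pro:isomorEventBundle}. Write $\Phi=\Phi_{\RR_{\geq 0},F}$ and $\Phi_\BB=\Phi_{\BB,F}$.

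The first step is to confirm that the square commutes. This is immediate from formula~\eqref{eq:PhiFFF}: both $\kappa$ and $\Phi$ act entrywise. Applying the semiring map $\RR_{\geq 0}\to\BB$ to $p_{\{v_0,\dots,v_n\}}(a)$ gives the same result whether it is done before or after reindexing by $e=(x,a)$. So $\kappa_{f_F}\circ\Phi=\Phi_\BB\circ\kappa_F$.

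The second step uses this commutativity together with bijectivity of both horizontal maps to get
\[
\Phi\bigl(\kappa_F^{-1}(\kappa_F(p))\bigr)=\kappa_{f_F}^{-1}\bigl(\Phi_\BB(\kappa_F(p))\bigr)=\kappa_{f_F}^{-1}\bigl(\kappa_{f_F}(\Phi(p))\bigr).
\]
The second equality is just the commutativity from the first step. For the first equality, suppose $q'\in\sDist(f_F)$ satisfies $\kappa_{f_F}(q')=\Phi_\BB(\kappa_F(p))$. Write $q'=\Phi(q)$. Then $\Phi_\BB(\kappa_F(q))=\Phi_\BB(\kappa_F(p))$, and injectivity of $\Phi_\BB$ gives $\kappa_F(q)=\kappa_F(p)$. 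The reverse inclusion is direct. It follows that $\kappa_F^{-1}(\kappa_F(p))=\{p\}$ if and only if $\kappa_{f_F}^{-1}(\kappa_{f_F}(\Phi(p)))=\{\Phi(p)\}$.

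The third step is to note that $\Phi$ is an affine isomorphism, so it preserves and reflects vertices. Hence $p$ is a vertex of $\Emp(F)$ if and only if $\Phi(p)$ is a vertex of $\sDist(f_F)$. Corollary~\ref{cor:pvertexifandsimplicialAAAAA}, applied to $\Phi(p)$, then finishes the argument.

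The only point needing care is that Corollary~\ref{cor:pvertexifandsimplicialAAAAA} relies on Proposition~\ref{pro:simprvertmin}, and hence on the finiteness hypotheses of Proposition~\ref{pro:polytope of sdist}. I would check that these hold for $f_F$ under the standing assumptions that $\Sigma$ is finite and $F$ has finite values. Under these assumptions, $S(\Sigma)$ has finitely many nondegenerate simplices, and the fibres of $f_F$ are the finite sets $F(\sigma)$. This is the same justification already used in Proposition~\ref{pro:emprvertmin}.
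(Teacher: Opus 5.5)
Your proposal is correct and follows the paper's proof. Both transport Corollary~\ref{cor:pvertexifandsimplicialAAAAA} through the commutative square~\eqref{dia:callpsingfromemptosDist}, using that $\Phi_{\RR_{\geq 0},F}$ is an affine isomorphism carrying $\kappa_F^{-1}(\kappa_F(p))$ onto $\kappa_{f_F}^{-1}(\kappa_{f_F}(\Phi(p)))$; you additionally write out the commutativity check, the fibre identification via injectivity of $\Phi_{\BB,F}$, and the finiteness hypotheses that the paper leaves implicit.
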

\begin{proof}
{Denote $\Phi_F=\Phi_{\RR_{\geq 0},F}$. Diagram (\ref{dia:callpsingfromemptosDist}) implies that 
$$
\Phi_F\left(\kappa_F^{-1}(\kappa_F(p))\right)=\kappa_{f_F}^{-1}\left(\kappa_{f_F}(\Phi_{\BB,F}(p))\right)
$$
So we get the result by Proposition \ref{pro:isomorEventBundle} and Corollary \ref{cor:pvertexifandsimplicialAAAAA}.}
\end{proof}

\begin{pro}\label{pro:preqpfilter}
For a possibilistic empirical model 
$p \in \on{Emp}_{\BB}(F)$, the subset $\kappa_{F}^{-1}(p_{\preceq})$ is a 
face that is affinely isomorphic to ${\Emp}(\eta_F(p))$.
\end{pro}
\begin{proof}
{Denote $f=f_F$. By part (3) of Proposition \ref{pro:preservation of preorder on empirical} and Diagram (\ref{dia:callpsingfromemptosDist}), we have 
$$
\kappa_{F}^{-1}(p_{\preceq})=\Phi_{\RR_{\geq 0},F}^{-1}\left(\kappa_{f}^{-1}(\Phi_{\BB,F}(p)_{\preceq})\right).
$$
Therefore, by Proposition \ref{pro:identification of preimage kappa}, 
$\kappa_{F}^{-1}(p_{\preceq})$ is a face of $\Emp(F)$ and it is affinely isomorphic to 
$$
\Phi_{\RR_{\geq 0},F}^{-1}\left(\sDist(f|_{\zeta_{f}\left(\Phi_{\BB,F}(p)\right)})\right),
$$
which is equal to 
$$
\begin{aligned}
\Phi_{\RR_{\geq 0},F}^{-1}\left(\sDist(f|_{\left(\rho_F\circ \rho^{-1}_F\right)\left(\zeta_{f}\left(\Phi_{\BB,F}(p)\right)\right)})\right)
&=\Phi_{\RR_{\geq 0},F}^{-1}\left(\sDist(f|_{\rho_F \left(\eta_F(p)\right)})\right) \\
&=\Phi_{\RR_{\geq 0},F}^{-1}\left(\sDist(f_{\eta_F(p)})\right) \\
&=\Emp(\eta_F(p)),
\end{aligned}
$$
see Equation (\ref{eq:etadefinition}).}
\end{proof}

\begin{pro}\label{pro:vertex iff does not lift Empversion}
An empirical model $p\in \on{Emp}(F)$ is a vertex if and only if
every {possibilistic empirical model} $q \prec \kappa_F(p)$ does not lie in the image of $\kappa_F$.
\end{pro}

\begin{proof}
{Denote by $\Phi_F=\Phi_{\RR_{\geq 0},F}$. By Proposition~\ref{pro:isomorEventBundle}, $p$ is a vertex of $\Emp(F)$ if and only if $\Phi_F(p)$ is a vertex of $\sDist(f_F)$. By Proposition~\ref{pro:vertex iff does not lift}, this is equivalent to
\[
\kappa_{f_F}^{-1}\bigl(\kappa_{f_F}(\Phi_F(p))_{\preceq}\bigr)
=
\kappa_{f_F}^{-1}\bigl(\kappa_{f_F}(\Phi_F(p))\bigr).
\]

By the commutative diagram~(\ref{dia:callpsingfromemptosDist}), we have
\[
\kappa_F^{-1}\bigl(\kappa_F(p)_{\preceq}\bigr)
=
\Phi_F^{-1}\Bigl(
\kappa_{f_F}^{-1}\bigl(\kappa_{f_F}(\Phi_F(p))_{\preceq}\bigr)
\Bigr),
\]
and
\[
\kappa_F^{-1}\bigl(\kappa_F(p)\bigr)
=
\Phi_F^{-1}\Bigl(
\kappa_{f_F}^{-1}\bigl(\kappa_{f_F}(\Phi_F(p))\bigr)
\Bigr).
\]
Since $\Phi_F$ is an isomorphism, we conclude that $p$ is a vertex if and only if
\[
\kappa_F^{-1}\bigl(\kappa_F(p)_{\preceq}\bigr)
=
\kappa_F^{-1}\bigl(\kappa_F(p)\bigr).
\]
Equivalently, every possibilistic empirical model $q \prec \kappa_F(p)$ does not lie in the image of $\kappa_F$.
}
\end{proof}

{
\begin{cor}\label{cor:minisvertex}
If $\kappa_F(p)$ is minimal, then $p$ is a vertex.    
\end{cor}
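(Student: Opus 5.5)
The plan is to mirror the proof of Corollary~\ref{cor:minisvertexsimpversion}, but on the empirical-model side, using Proposition~\ref{pro:vertex iff does not lift Empversion}. Throughout, minimality of $\kappa_F(p)$ is read with respect to the preorder $\preceq$ on $\Emp_{\BB}(F)$. Concretely, it means there is no possibilistic empirical model $q\in\Emp_{\BB}(F)$ with $q\prec\kappa_F(p)$.

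Assuming $\kappa_F(p)$ is minimal, the set $\{q\in\Emp_{\BB}(F)\mid q\prec \kappa_F(p)\}$ is empty. Therefore the condition in Proposition~\ref{pro:vertex iff does not lift Empversion} holds vacuously: every $q\prec\kappa_F(p)$ fails to lie in the image of $\kappa_F$. That proposition then gives directly that $p$ is a vertex of $\Emp(F)$.

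Alternatively, one can transport the statement to simplicial distributions. Part~(3) of Proposition~\ref{pro:preservation of preorder on empirical} shows that $\Phi_{\BB,F}$ preserves and reflects $\preceq$. Hence $\Phi_{\BB,F}(\kappa_F(p))=\kappa_{f_F}(\Phi_F(p))$ is minimal in $\sDist_{\BB}(f_F)$, using the commutativity of Diagram~(\ref{dia:callpsingfromemptosDist}). Corollary~\ref{cor:minisvertexsimpversion} then shows that $\Phi_F(p)$ is a vertex, and since $\Phi_F$ is an affine isomorphism (Proposition~\ref{pro:isomorEventBundle}), $p$ is a vertex as well.

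The only point that needs care is the strict notation $\prec$, which means $\preceq$ together with $\neq$. Because $\preceq$ is antisymmetric on possibilistic models (supports determine them), minimality really does exclude every $q\prec\kappa_F(p)$. I do not expect any genuine obstacle beyond this bookkeeping.
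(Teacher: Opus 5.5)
Your proposal is correct and follows the paper's argument. The paper states this corollary without a separate proof, directly after Proposition~\ref{pro:vertex iff does not lift Empversion}. As you say, minimality (with antisymmetry of $\preceq$ on possibilistic models) leaves no $q\prec\kappa_F(p)$, so that proposition's criterion holds vacuously. Your alternative route through $\Phi_F$ and Corollary~\ref{cor:minisvertexsimpversion} is also valid, and it mirrors how the paper transports the other empirical-model results in that subsection.
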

}

\begin{lem}\label{lem:Fa=Ga}
Let 
$\catC$ be a category, 
$F,G\colon \catC^{\op} \to \catSet$ be {non-trivial flasque functors such that $G \leq F$ (see Definition~\ref{def:Funcrelations}),} 
%{functors that are} non-trivial and locally surjective, 
%functors (Definition \ref{def:nontrivlocalsurj}) such that $G \leq F$ (see Definition~\ref{def:Funcrelations}), 
and let
%let 
$s\colon a \to b$ be a morphism of $\catC$. 
%Then
\begin{enumerate}
    \item If $F(b)=G(b)$, then $F(a)=G(a)$.
    \item If $F(s)$ is an isomorphism and $F(a)=G(a)$, then $F(b)=G(b)$.
\end{enumerate}
\end{lem}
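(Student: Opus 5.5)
Both parts come down to comparing the restriction maps of $F$ and $G$ along $s$, using the subfunctor relation $G\leq F$ together with flasqueness. Since $s\colon a\to b$ is a morphism in $\catC$, the presheaves give restriction maps $F(s)\colon F(b)\to F(a)$ and $G(s)\colon G(b)\to G(a)$. By Definition~\ref{def:Funcrelations} we have $G(a)\subset F(a)$, $G(b)\subset F(b)$, and $G(s)=F(s)|_{G(b)}$. So in each part only one inclusion needs proof: $F(a)\subset G(a)$ in (1), and $F(b)\subset G(b)$ in (2).

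For part (1), assume $F(b)=G(b)$ and take $x\in F(a)$. Because $F$ is flasque, $F(s)$ is surjective, so $x=F(s)(y)$ for some $y\in F(b)=G(b)$. Then $x=F(s)(y)=G(s)(y)\in G(a)$, which gives $F(a)=G(a)$. Non-triviality of $F$ is not needed here; surjectivity of $F(s)$ does all the work.

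For part (2), assume $F(s)$ is a bijection and $F(a)=G(a)$, and take $y\in F(b)$. Then $F(s)(y)\in F(a)=G(a)$. Since $G$ is flasque, $G(s)$ is surjective, so there is $y'\in G(b)$ with $G(s)(y')=F(s)(y)$. This means $F(s)(y')=F(s)(y)$, and injectivity of $F(s)$ forces $y=y'\in G(b)$. Hence $F(b)=G(b)$.

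Neither part has a real obstacle. The only point needing care is in (2): we need surjectivity of $G(s)$, coming from flasqueness of $G$, to produce a preimage inside $G(b)$, and then injectivity of $F(s)$ to identify that preimage with $y$. The hypothesis $G\leq F$ is what lets us treat $G(s)$ as the restriction of $F(s)$ throughout.
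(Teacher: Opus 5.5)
Your proof is correct and follows essentially the same argument as the paper. Part (1) uses surjectivity of $F(s)$ together with $G(s)=F(s)|_{G(b)}$. Part (2) uses surjectivity of $G(s)$ to find a preimage in $G(b)$, and then injectivity of $F(s)$ to identify that preimage with the given element of $F(b)$.
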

\begin{proof}
We have the following commutative diagram:
\begin{equation}\label{eq:GtoF}
\begin{tikzcd}[column sep=huge,row sep=large]
G(b) \arrow[r,hook] \arrow[d,two heads,"G(s)"] & F(b) 
\arrow[d,two heads,"F(s)"] \\
G(a) \arrow[r,hook] & F(a)
\end{tikzcd}
\end{equation}
For $(1)$, if $G(b)=F(b)$, then the surjectivity of $F(s)$ implies $G(a)=F(a)$. For $(2)$, suppose 
$G(a)=F(a)$. Since $G(s)$ is surjective, for $x\in F(b)$ there exists $y \in G(b)$ such that $G(s)(y)=F(s)(x)$. 
By the commutativity of Diagram (\ref{eq:GtoF}), we have $G(s)(y)=F(s)(y)$, hence $F(s)(x)=F(s)(y)$. The injectivity of $F(s)$ 
then implies $x=y$.
\end{proof}

\begin{defn}\label{def:F-strong-connectivity}
Let $\catC$ be a finite poset, and let
$F\colon \catC^{\op}\to \catSet$ be a functor. Two objects
$a,b\in \catC$ are said to be \emph{$F$-strongly connected} if there
exists a zigzag in $\catC$
\begin{equation}\label{eq:Zigzag}
a=c_0 \longleftrightarrow c_1 \longleftrightarrow \cdots
\longleftrightarrow c_m=b
\end{equation}
such that, for each $i$, the corresponding morphism between
$c_i$ and $c_{i+1}$ is sent by $F$ to an isomorphism.
We say that $F$ is \emph{strongly connected} if every pair
of maximal objects of $\catC$ is $F$-strongly connected.
\end{defn}

\begin{pro}\label{pro:zigzagisom}
Let $\catC$ be a finite poset, and let
$F,G\colon \catC^{\op}\to \catSet$ be 
non-trivial flasque
%locally surjective
functors. Suppose that $F$ is strongly connected. If
$G\leq F$ and there exists a maximal object $c$ of $\catC$ such that
$G(c)=F(c)$, then $G=F$.
\end{pro}

\begin{proof}
Suppose that $G \leq F$ and let $c$ be a maximal object of $\catC$ such that $G(c)=F(c)$.
We prove that $G(x)=F(x)$ for every object 
$x\in\catC$. For any maximal object $b$ there exists a zigzag as in \eqref{eq:Zigzag}, such that $F(s)$ is an isomorphism for every morphism $s$ in the zigzag.
By Lemma~\ref{lem:Fa=Ga} we obtain the equality $G=F$ along this zigzag. In particular, we conclude that
$G(b)=F(b)$. Now let $x$ be an arbitrary object of $\catC$.
Since $\catC$ is a finite poset, there exists a maximal object $b$ and a morphism
$x \to b$.
Applying part $(1)$ of Lemma~\ref{lem:Fa=Ga} to this morphism and using the equality $G(b)=F(b)$, we obtain
$
G(x)=F(x)
$.
%Therefore $G=F$.
\end{proof}

\begin{thm}\label{thm:1vert}
Let $F$ be presheaf on $\Sigma$, and $p\in \on{Emp}(F)$. Suppose
that the functor $\eta_F(\kappa_F(p))$ is strongly connected. If for every
$G\in {\Esubb}(F)$ satisfying
$
G\leq \eta_F(\kappa_F(p)),
$
there exists a maximal simplex $\sigma\in \Sigma$ such that
\[
G(\sigma)=\eta_F(\kappa_F(p))(\sigma),
\]
then
$p$ is a vertex of $\on{Emp}(F)$ 
\end{thm}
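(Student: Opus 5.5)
The plan is to show that $\kappa_F(p)$ is minimal in $\Emp_{\BB}(F)$ with respect to $\preceq$. Corollary~\ref{cor:minisvertex} then gives that $p$ is a vertex of $\Emp(F)$. We transport minimality through the isomorphism $\eta_F\colon \Emp_{\BB}(F)\to \Esubb_\Sigma(F)$ of \eqref{eq:isomorphismL}. By part (2) of Proposition~\ref{pro:preservation of preorder on empirical}, $q\preceq \kappa_F(p)$ holds if and only if $\eta_F(q)\leq \eta_F(\kappa_F(p))$. So it suffices to show that $H:=\eta_F(\kappa_F(p))$ is minimal among finite sub-event scenarios of $F$ under $\leq$.

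Let $G\in\Esubb(F)$ satisfy $G\leq H$. Since $G\leq H\leq F$ and both $G$ and $H$ are subfunctors of $F$, $G$ is also a subfunctor of $H$ in the sense of Definition~\ref{def:Funcrelations}. Indeed, the restriction maps of $G$ are restrictions of those of $F$, and hence of those of $H$.

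Both $G$ and $H$ are non-trivial flasque functors on the finite poset $\catC_\Sigma$, being event scenarios. By hypothesis, $H$ is strongly connected, and there is a maximal simplex $\sigma$ with $G(\sigma)=H(\sigma)$. Maximal simplices of $\Sigma$ are exactly the maximal objects of $\catC_\Sigma$. Proposition~\ref{pro:zigzagisom}, applied with $F:=H$, therefore yields $G=H$. Hence $H$ is minimal.

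To conclude, take any $q\in\Emp_{\BB}(F)$ with $q\preceq\kappa_F(p)$. Then $\eta_F(q)\leq H$, so $\eta_F(q)=H$. Since $\eta_F$ is injective, $q=\kappa_F(p)$. Thus $\kappa_F(p)$ is minimal, and Corollary~\ref{cor:minisvertex} applies.

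The only step needing care is the bookkeeping that $G\leq H$ in the functor order and that $\eta_F$ preserves and reflects the order. The first follows since both are subfunctors of $F$. The second is exactly part (2) of Proposition~\ref{pro:preservation of preorder on empirical}, which we take as given.
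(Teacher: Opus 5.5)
Your proof is correct and follows the paper's argument: apply Proposition~\ref{pro:zigzagisom} to $\eta_F(\kappa_F(p))$ to get its minimality, transfer this to $\kappa_F(p)$ via part (2) of Proposition~\ref{pro:preservation of preorder on empirical}, and conclude with Corollary~\ref{cor:minisvertex}. Your extra bookkeeping spells out details the paper leaves implicit: checking the hypotheses of Proposition~\ref{pro:zigzagisom} (finite poset, non-trivial flasque functors, maximal objects), and using the injectivity of $\eta_F$ to pass from minimality of $H$ to minimality of $\kappa_F(p)$.
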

\begin{proof} 
Assume that the stated condition holds.
By Proposition~\ref{pro:zigzagisom}, $\eta_{F}(\kappa_F(p))$ is minimal.
Therefore, by {part (2) of Proposition~\ref{pro:preservation of preorder on empirical}}, $\kappa_F(p)$ is minimal.
It then follows from Corollary~\ref{cor:minisvertex} that $p$ is a vertex.
\end{proof}

%{===}

\begin{pro}\label{pro:strongconcimplieszigzag}
Let $F$ be a presheaf on $\Sigma$, and set $f:=f_F$. Let
$p\in \on{Emp}(F)$, and define
\[
E' := \zeta_f\bigl(\kappa_f(\Phi_{F}(p))\bigr),
\qquad
g := f|_{E'},
\]
{where $\Phi_F=\Phi_{\RR_{\geq 0},F}$ (see Proposition~\ref{pro:isomorEventBundle}).} If $g$ is strongly connected, then 
\begin{enumerate}
    \item $\eta_F(\kappa_F(p))$ is strongly connected.
    \item {$\eta_F(\kappa_F(p))$ is minimal in $\Esubb_{\Sigma}(F)$.}
\end{enumerate}
    
\end{pro}

\begin{proof}
By Corollary~\ref{cor:gentogen} and Proposition~\ref{pro:uniqeextension}, there exists a value $t$ such that for every simplex $x \in S(\Sigma)_n$ and every $e \in E(F)_n$, if $\Phi_F(p)_{x}(e) \neq 0$, then $\Phi_F(p)_{x}(e) = t$. 
Since 
$$
\sum_{e \in E(F)_{n}} \Phi_F(p)_{x}(e) = 1,
$$
it follows that there exists a natural number $k$ such that
$$
\left| \{ e \in E(F)_{n} \mid \Phi_F(p)_{x}({e}) \neq 0 \} \right| = k
$$
for every simplex $x$ in $S(\Sigma)_n$. Therefore, we have 
$$
\left| \{ a \in F(\sigma) \mid p_{\sigma}(a) \neq 0 \} \right| = k
$$
for every $\sigma \in \Sigma$ {(see Equation (\ref{eq:PhiFFF}))}. Hence, the set $\eta_F(\kappa_F(p))(\sigma)$ has cardinality $k$. Since $\eta_F(\kappa_F(p))$ is locally surjective, it follows that for every inclusion $s \colon \tau 
\hookrightarrow \sigma$, the map $\eta_F(\kappa_F (p))(s)$ is an isomorphism.

The assumption that every pair of generator simplices of $\zeta_f(\kappa_f(\Phi_F(p)))$ is strongly connected implies that $\Sigma$ is connected. Therefore, for every pair of maximal simplices $\sigma_1, \sigma_2 \in \Sigma$, there exists a zigzag in $\catC_{\Sigma}$ as in 
Diagram~\eqref{eq:Zigzag}, and {for every $s$ in the zigzag} $\eta_F(\kappa_F(p))(s)$ is an isomorphism. 

{For part~(2), Corollary~\ref{cor:s.cimpliesmin} implies that
$
E'=\zeta_f\bigl(\kappa_f(\Phi_F(p))\bigr)
$
is minimal in $\subb_{S(\Sigma)}(f)$.
By part~(2) of Proposition~\ref{pro:preservation of preorder}, $\kappa_f(\Phi_F(p))$
is minimal in $\sDist_{\BB}(f)$.
By Diagram (\ref{dia:callpsingfromemptosDist}), we have  
$$
\kappa_f(\Phi_F(p))=\Phi_{\BB,F}(\kappa_F(p)).
$$
So by part (3) of Proposition \ref{pro:preservation of preorder on empirical}, it follows that $\kappa_F(p)$ is minimal in $\Emp_{\BB}(F)$. 
Finally, by part (2) of Proposition \ref{pro:preservation of preorder on empirical},
$
\eta_F(\kappa_F(p))
$
is minimal in $\Esubb_{\Sigma}(F)$.}
\end{proof}

Proposition~\ref{pro:strongconcimplieszigzag} implies that the sufficient condition of Theorem~\ref{thm:strongisvertx} implies the sufficient condition of Theorem~\ref{thm:1vert}. Thus, the former provides a stronger, but more restrictive, criterion for vertex characterization. {In section \ref{subsect:compcharac},
we present an example of a vertex $p \in \Emp(F)$ that satisfies the condition of Theorem~\ref{thm:1vert}, while $\Phi_F(p)$ does not satisfy the condition of Theorem~\ref{thm:strongisvertx}.} 

%\newpage

\section{Examples}

   \label{sec:Examples}

In this section, we present examples of simplicial distributions {and empirical models}, illustrating how Theorems~\ref{thm:strongisvertx} and~\ref{thm:1vert} can be used to detect extremal {objects}. The examples {of simplicial distributions} are defined on simplicial set maps given by projection onto the first factor:
\[
f_{X,m}\colon X\times \Delta_{\ZZ_m}\to X,
\]
where $\Delta_{\ZZ_m}$ is the simplicial set introduced in Example~\ref{ex:first examples}. {The examples of empirical models, on the other hand, arise from the presheaves of events}
\[
\eE_T\colon \catC_{\Sigma}^{\op} \to \catSet,
\]
{where $T=(\Sigma,\ZZ_m)$, as described in Example~\ref{ex:eventpreshef}.}

{In this section, when $p$ is either a simplicial distribution or an empirical model, we use the notation $p_{\sigma}^a$ instead of both $p_{\sigma}(\sigma,a)$ and $p_{\sigma}(a)$.}

\subsection{Low dimensional examples}

We begin with the circle scenario studied in \cite{kharoof2024extremal}, for which the extremal simplicial distributions are completely determined.

\begin{figure}[h!] 
  \centering
\includegraphics[width=0.24\linewidth]{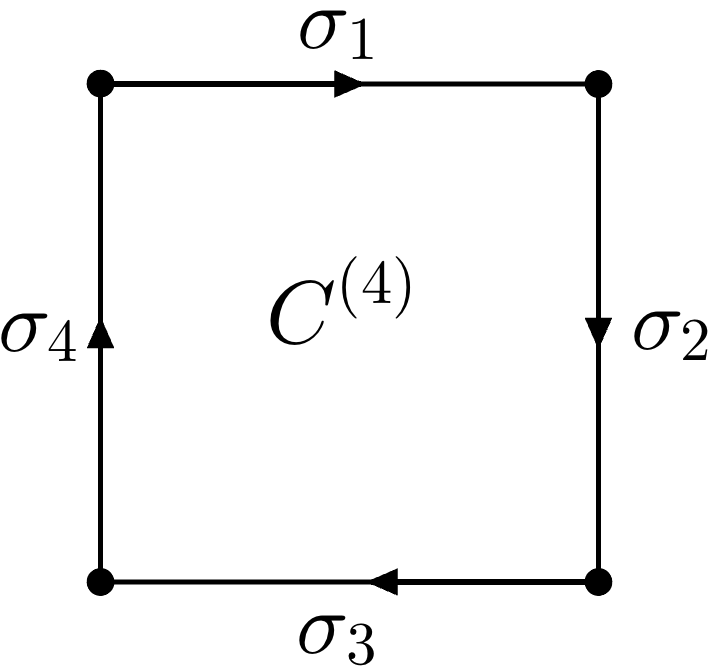}
\caption{The circle with four edges}
\label{fig:Circle}
\end{figure}
 
\begin{ex}\label{ex:circle}
The \emph{(simplicial) circle} $C$ 
is the simplicial set specified by 
a sequence of pairwise distinct $1$-simplices $\sigma_1,\cdots,\sigma_n\in C{_1}$ satisfying
$$
d_{0}(\sigma_1)=d_{1}(\sigma_2)\, , \, 
d_{0}(\sigma_2)=d_{1}(\sigma_3)\, ,\cdots,\, d_{0}(\sigma_{n-1})=d_{1}(\sigma_n)\, , \,
d_{0}(\sigma_n)=d_{1}(\sigma_1).
$$ 
We sometimes write $C=C^{(n)}$ to indicate that the circle has $n$ edges. See Figure \ref{fig:Circle}.

The \emph{$k$-cyclic bundle scenario} over the circle $C^{(n)}$ is the simplicial map
$$
f = f^{n,k} \colon C^{(nk)} \to C^{(n)},
$$
defined by setting
$
f_{\sigma_j} = \sigma_{[j]},
$
where $[j]$ denotes the residue class of $j$ modulo $n$. 
Note that every pair of edges in $C^{(nk)}$ is $f$-strongly connected.

Given the projection map $f_{C^{(n)},m}\colon C^{(n)}\times \Delta_{\ZZ_m} \to C^{(n)}$, 
a simplicial distribution 
\[
p \colon C^{(n)} \to D(C^{(n)}\times \Delta_{\ZZ_m})
\]
is called a \emph{$k$-order cycle distribution} on $f_{C^{(n)},m}$, where $1 \le k \le m$, if there exists a finite sequence
\[
\bigl(
a^{(1)}_1,\ldots,a^{(1)}_n;\;
a^{(2)}_1,\ldots,a^{(2)}_n;\;
\ldots;\;
a^{(k)}_1,\ldots,a^{(k)}_n
\bigr)
\]
of elements in $\ZZ_m$ such that  
$a^{(j)}_i \ne a^{(s)}_i$ for every $1 \le i \le n$ and $j \ne s$,  
and the distribution is defined by
\[
p^{(a,b)}_{\sigma_i} =
\begin{cases}
\displaystyle \frac{1}{k} 
& \text{if } (a,b) = \bigl(a^{(j)}_i,\, a^{(j)}_{i+1}\bigr)
\text{ for some } 1 \le j \le k,\\[6pt]
0 & \text{otherwise},
\end{cases}
\qquad
\]
for $1 \le i \le n-1$, and
\[
p^{(a,b)}_{\sigma_n} =
\begin{cases}
\displaystyle \frac{1}{k} 
& \text{if } (a,b) = \bigl(a^{(j)}_n,\, a^{(j+1)}_1\bigr)
\text{ for some } 1 \le j \le k-1,\\[6pt]
\displaystyle \frac{1}{k} 
& \text{if } (a,b) = \bigl(a^{(k)}_n,\, a^{(1)}_1\bigr),\\[6pt]
0 & \text{otherwise}.
\end{cases}
\]
The restricted scenario
$$
f_{C^{(n)},m}\big|_{\zeta_{f}(\kappa_f(p))}
$$
is isomorphic to the $k$-cyclic scenario $f^{n,k}$. Consequently, by Theorem~\ref{thm:strongisvertx}, every $k$-order cycle distribution is a vertex of $\sDist(f_{C^{(n)},m})$. This reproduces the main result of~\cite[Corollary 4.7]{kharoof2024extremal}.
\end{ex}

Next, we consider a two-dimensional example.

\begin{figure}[h]
\centering
\begin{tikzpicture}[
    scale=1.1,
    oriented/.style={
        line width=0.8pt,
        postaction={decorate},
        decoration={
            markings,
            mark=at position 0.55 with {\arrow{Stealth[length=2mm]}}
        }
    },
    celllabel/.style={font=\large, inner sep=1pt}
]

% vertices
\coordinate (c) at (0,0);
\coordinate (t) at (0,2);
\coordinate (l) at (-2.2,0);
\coordinate (r) at (2.2,0);
\coordinate (b) at (0,-2);

% filled 2-simplices
\fill[gray!25] (l)--(t)--(c)--cycle;
\fill[gray!25] (c)--(t)--(r)--cycle;
\fill[gray!25] (l)--(c)--(b)--cycle;
\fill[gray!25] (c)--(r)--(b)--cycle;

% oriented edges
\draw[oriented] (l) -- (t);
\draw[oriented] (r) -- (t);

\draw[oriented] (c) -- (t);
\draw[oriented] (c) -- (b);

\draw[oriented] (l) -- (c);
\draw[oriented] (r) -- (c);

\draw[oriented] (l) -- (b);
\draw[oriented] (r) -- (b);

% cell labels
\node[celllabel] at (-0.75,0.65) {$\sigma_1$};
\node[celllabel] at ( 0.75,0.65) {$\sigma_2$};
\node[celllabel] at (-0.75,-0.65) {$\sigma_4$};
\node[celllabel] at ( 0.75,-0.65) {$\sigma_3$};

\end{tikzpicture}
\caption{A disk triangulated into four triangles.}
\label{fig:diamond-cells}
\end{figure}

\begin{ex}\label{ex:disk}
Let $X$ be the simplicial set generated by four $2$-simplices
$\sigma_1,\sigma_2,\sigma_3,\sigma_4$, glued along faces as follows:
$$
d_0(\sigma_1)=d_0(\sigma_2), \; d_2(\sigma_2)=d_2(\sigma_3), \;
d_0(\sigma_3)=d_0(\sigma_4), \;
d_2(\sigma_4)=d_2(\sigma_1).
$$
See Figure~\ref{fig:diamond-cells}. 
We define a simplicial distribution $p$ on the 
%pre-bundle scenario 
projection map
$$
f=f_{X,4} \colon X \times \Delta_{\ZZ_4} \to X
$$ 
by setting
$$
p_{\sigma_i}^{(a,b,c)}=
\begin{cases}
\frac{1}{4} & \text{if} \;\; (a,b,c) \in A    \\
0 & \text{otherwise,}
\end{cases}
\qquad
p_{\sigma_3}^{(a,b,c)}=
\begin{cases}
\frac{1}{4} & \text{if} \;\; (a,b,c) \in B    \\
0 & \text{otherwise,}
\end{cases}
$$
where $i\in \set{1,2,4}$ and 
$$
A=\set{(0,0,0),(1,0,1),(2,0,2),(3,0,3)},
\qquad B=\set{(0,0,3),(1,0,0),(2,0,1),(3,0,2)}.
$$
From the 
%geometric shape 
geometry
of $\zeta_{f}\!\left(\kappa_f(p)\right)$ (see Figure~\ref{fig:four-disks}),
one sees that $p$ satisfies the condition of Theorem~\ref{thm:strongisvertx}; namely, {$f|_{\zeta_f(\kappa_f(p))}$} is strongly connected.
Therefore, $p$ is a vertex of $\sDist(f)$.
In fact, the geometric realization of $\zeta_{f}\!\left(\kappa_f(p)\right)$ is a disk composed of $16$ triangles. 
\end{ex}

\begin{figure}[h]
\centering
\begin{tikzpicture}[
    scale=0.95,
    oriented/.style={
        line width=0.8pt,
        postaction={decorate},
        decoration={
            markings,
            mark=at position 0.55 with {\arrow{Stealth[length=2mm]}}
        }
    },
    colored/.style={
        line width=1.6pt,
        postaction={decorate},
        decoration={
            markings,
            mark=at position 0.55 with {\arrow{Stealth[length=2mm]}}
        }
    },
    vlabel/.style={font=\small, inner sep=1pt}
]

\newcommand{\DiskPanel}[9]{%
\begin{scope}[shift={(#1,#2)}]

% vertices
\coordinate (c)  at (0,0);
\coordinate (t)  at (0,2);
\coordinate (l)  at (-2.6,0);
\coordinate (r)  at (2.6,0);
\coordinate (bl) at (-1.2,-1.8);
\coordinate (br) at (1.2,-1.8);

% filled 2-simplices
\fill[gray!25] (l)--(t)--(c)--cycle;
\fill[gray!25] (c)--(t)--(r)--cycle;
\fill[gray!25] (l)--(c)--(bl)--cycle;
\fill[gray!25] (c)--(r)--(br)--cycle;

% black oriented edges
\draw[oriented] (l) -- (t);
\draw[oriented] (r) -- (t);
\draw[oriented] (c) -- (t);

\draw[oriented] (l) -- (c);
\draw[oriented] (r) -- (c);

\draw[oriented] (l) -- (bl);
\draw[oriented] (r) -- (br);

% colored oriented edges
\draw[colored, #8] (c) -- (bl);
\draw[colored, #9] (c) -- (br);

% simplex labels
\node at (-0.85,0.75) {$\sigma_1$};
\node at ( 0.85,0.75) {$\sigma_2$};
\node at (-1.10,-0.75) {$\sigma_4$};
\node at ( 1.10,-0.75) {$\sigma_3$};

% vertex labels
\node[vlabel, above]       at (t)  {$#3$};
\node[vlabel, left]        at (l)  {$#4$};
\node[vlabel, right]       at (r)  {$#5$};
\node[vlabel, above right] at (c)  {$0$};
\node[vlabel, below]       at (bl) {$#6$};
\node[vlabel, below]       at (br) {$#7$};

\end{scope}
}

% top row
\DiskPanel{0}{0}{0}{0}{0}{0}{3}{red}{yellow!85!orange}
\DiskPanel{7.4}{0}{1}{1}{1}{1}{0}{green!70!black}{red}

% bottom row
\DiskPanel{0}{-5.3}{3}{3}{3}{3}{2}{yellow!85!orange}{cyan!45}
\DiskPanel{7.4}{-5.3}{2}{2}{2}{2}{1}{cyan!45}{green!70!black}

\end{tikzpicture}
\caption{{The space $\zeta_{f}\bigl(\kappa_f(p)\bigr)$ corresponding to the simplicial distribution in Example~\ref{ex:disk}. The same colored edges are identified.}}
\label{fig:four-disks}
\end{figure}

\subsection{Combinatorial sphere}
\label{sec:Combinatorial sphere}

In this section, we consider {empirical models} on the boundary of the standard $n$-simplex. Showing that these scenarios admit contextual simplicial distributions was pivotal in the proof of Vorobev's celebrated theorem~\cite{vorob1962consistent}, which characterizes acyclic simplicial complexes.

\begin{defn}\label{def:standardsimpl}
The \emph{standard $n$-simplex} $\Delta^n$ is the simplicial complex whose vertices are the elements of the set 
$
\{0,1,\dots,n\},
$
and whose simplices are all nonempty subsets of $\{0,1,\dots,n\}$. 
That is,
\[
\Delta^n = \{\, \sigma \subseteq \{0,1,\dots,n\} \mid~ \sigma \neq \emptyset \,\}.
\]
The \emph{boundary of the standard $n$-simplex}, denoted $\partial \Delta^n$, is the subcomplex consisting of all proper faces of $\Delta^n$, that is,
\[
\partial \Delta^n = \Delta^n-\set{\set{0,1,\dots,n}}.
\]    
\end{defn}

The following example is the smallest simplicial complex admitting a
contextual simplicial distribution. It belongs to the class of
distributions known as Popescu--Rohrlich (PR) boxes \cite{pr94}.

%\coc{Prefer to omit arrows on top of letters.}

\begin{ex}\label{ex:3PRbox}
{Consider} 
$$
\partial \Delta^2 = \{\{x\}, \{y\}, \{z\}, \{x,y\}, \{x,z\}, \{y,z\}\}.
$$
We define $p \in \on{Emp}(\eE_{(\partial \Delta^2,\ZZ_2}))$ (see Example~\ref{ex:eventpreshef}) 
by {specifying its values on the maximal simplices of $\partial \Delta^2$:}
\[
p_{\{x,y\}}^{a} = p_{\{x,z\}}^{a} =
\begin{cases}
\frac{1}{2}, & \text{if } a \in \{(0,0),(1,1)\},\\[2mm]
0, & \text{otherwise,}
\end{cases}
\qquad
p_{\{y,z\}}^{a} =
\begin{cases}
\frac{1}{2}, & \text{if } a \in \{(1,0),(0,1)\},\\[2mm]
0, & \text{otherwise.}
\end{cases}
\]
The empirical model $p$ is contextual. 
%This model can be viewed as an analogue of the well-known \emph{PR box} model (see~\cite{pr94,abramsky2011sheaf}),  
%with the underlying measurement scenario given by triangle instead of square.  
%
\end{ex}

We now give a higher-dimensional generalization of this example.

\begin{pro}\label{pro:vertboundrysimpled}    
{Let} {$0,1,\dots,n$} be the vertices of $\partial \Delta^{n}$. We define 
$p \in \on{Emp}(\eE_{(\partial \Delta^{n}, \ZZ_2)})$ as follows: 
$$
p_{\set{0,\dots,j-1,j+1,\dots,n}}^{a}=
\begin{cases}
\frac{1}{n} & \text{if} \;\; a \in \set{(0,\dots,0),(1,1,0,\dots,0),(1,0,1,0,\dots,0),\dots,(1,0,\dots,0,1)},    \\
0 & \text{otherwise,}
\end{cases}
$$
for every $1\leq j \leq n$, and
$$
p_{\set{1,\dots,n}}^{a}=
\begin{cases}
\frac{1}{n} & \text{if} \;\; a \in \set{(1,0,\dots,0),(0,1,0,\dots,0),\dots,(0,\dots,0,1)},   \\
0 & \text{otherwise.} 
\end{cases}
$$
The empirical model $p$ is a vertex. 
\end{pro}

\begin{proof}
%The argument parallels the proof of Proposition~\ref{pro:vetdetal3}. 
{Denote $T=(\partial \Delta^{n}, \ZZ_2)$} and let $F \colon \catC_{\partial \Delta^n}^\op \to \catSet$ be the event scenario $\eta_{\eE_T}(\kappa_{\eE_T}(p))$. 
The first hypothesis of Theorem~\ref{thm:1vert} is clear; namely,
$F$ is strongly connected. For the second hypothesis, let $G \in \Esubb(\eE_T)$ such that $G \leq F$. 
Since $G$ is non-trivial, we have $G(\set{1,\dots,n}) \neq \emptyset$. Assume
$$
(1,0,\dots,0) \in G(\set{1,\dots,n}),
$$ 
then $(0,\dots,0) \in G(\set{2,\dots,n})$. Since $G$ is locally surjective, it follows that 
$$
(0,\dots,0) \in G(\set{0,2,\dots,n}).
$$
Proceeding inductively, we deduce successively that 
$(0,\dots,0)\in G(\set{0,3\dots,{n}})$, so 
\begin{equation}\label{eq:000G1}
(0,\dots,0)\in G(\set{0,1,3\dots,{n}}),
\end{equation}
then we get that 
$(0,\dots,0)\in G(\set{1,3\dots,n})$, as a result
$$
(0,1,0,\dots,0)\in G(\set{1,2,3\dots,{n}}).
$$
Equation (\ref{eq:000G1}) implies also that $(0,\dots,0)\in G(\set{0,1,4\dots,{n}})$, so
\begin{equation}\label{eq:000G2}
(0,\dots,0)\in G(\set{0,1,2,4,\dots,{n}}),
\end{equation}
then we get that 
$(0,\dots,0)\in G(\set{1,2,4,\dots,{n}})$, as a result
$$
(0,0,1,0,\dots,0)\in G(\set{1,2,3,\dots,{n}}).
$$
Equation (\ref{eq:000G2}) implies also that $(0,\dots,0)\in G(\set{0,1,2,5\dots,{n}})$, so
$$
(0,\dots,0)\in G(\set{0,1,2,3,5,\dots,{n}}),
$$
then we obtain that 
$(0,\dots,0)\in G(\set{1,2,3,5,\dots,{n}})$, as a result
$$
(0,0,0,1,0,\dots,0)\in G(\set{1,2,3,4,\dots,{n}}).
$$
Continuing this process, we conclude that $G(\set{1,\dots,{n}})=F(\set{1,\dots,{n}})$. 
Theorem \ref{thm:1vert} implies that $p$ is a vertex.
\end{proof}

Next, we illustrate the result in the case $n=3$.

 \begin{ex}\label{ex:vertdeta3}
Let us write $\set{0,1,2,3}$ for the vertices of $\partial \Delta^3$. We define 
$p \in \on{Emp}(\eE_{(\partial \Delta^3,\ZZ_2)})$ as follows:
$$
p_{\set{0,1,2}}^{a}=p_{\set{0,1,3}}^{a}=p_{\set{0,2,3}}^{a}=
\begin{cases}
\frac{1}{3} & \text{if} \;\; a \in \set{(0,0,0),(1,1,0),(1,0,1)}    \\
0 & \text{otherwise,}
\end{cases}
$$
$$
p_{\set{1,2,3}}^{a}=
\begin{cases}
\frac{1}{3} & \text{if} \;\; a \in \set{(1,0,0),(0,1,0),(0,0,1)}  \\
0 & \text{otherwise.}  
\end{cases}
$$
We prove that the empirical model $p$ is a vertex. 
{Let $T=(\partial \Delta^{3}, \ZZ_2)$ and} let $F \colon \catC_{\partial \Delta^3}^\op \to \catSet$ be the event scenario $\eta_{\eE_T}(\kappa_{\eE_T}(p))$. We have 
\[
\begin{aligned}
F(\set{0,1,2})
&=
F(\set{0,1,3})
=
F(\set{0,2,3})
=
\set{(0,0,0),(1,1,0),(1,0,1)}, \\
F(\set{1,2,3})
&=
\set{(1,0,0),(0,1,0),(0,0,1)}, \\
F(\set{1,2})
&=
F(\set{1,3})
=
F(\set{2,3})
=
\set{(0,0),(0,1),(1,0)}, \\
F(\set{0,1})
&=
F(\set{0,2})
=
F(\set{0,3})
=
\set{(0,0),(1,0),(1,1)}, \\
F(\set{0})
&=
F(\set{1})
=
F(\set{2})
=
F(\set{3})
=
\set{0,1}.
\end{aligned}
\]
The event scenario $F$ is clearly strongly connected. For the second hypothesis of Theorem~\ref{thm:1vert}, let
$G\in \Esubb(\eE_T)$ be such that $G\leq F$. Suppose that
$G(\set{1,2,3})\neq \emptyset$. Without loss of generality, assume that
\[
(1,0,0)\in G(\set{1,2,3}).
\]
Then
\[
(0,0)\in G(\set{2,3}),
\]
which implies, {because of the local surjectivity of $G$}, that
\[
(0,0,0)\in G(\set{0,2,3}).
\]
Similarly, we obtain
\[
(0,0,0)\in G(\set{0,1,2})
\qquad \text{and} \qquad
(0,0,0)\in G(\set{0,1,3}).
\]
Therefore,
\[
(0,0)\in G(\set{1,2})
\qquad \text{and} \qquad
(0,0)\in G(\set{1,3}).
\]
It follows, {again because of the local surjectivity of $G$,} that both $(0,0,1)$ and $(0,1,0)$ belong to
$G(\set{1,2,3})$. Hence
\[
G(\set{1,2,3})=F(\set{1,2,3}).
\]
By Theorem~\ref{thm:1vert}, we conclude that $p$ is a vertex.
\end{ex}
%{===}

\subsection{Octohedral sphere}

The canonical examples of scenarios arising in quantum foundations, known as \emph{Bell scenarios} {\cite{brunner2014bell}}, are defined on spheres with specific triangulations. {A Bell scenario consists of $n$ parties, each of whom performs a measurement with $m$ outcomes. We now introduce the underlying simplicial complex, or simplicial set, for this scenario.}

\begin{defn}\label{def:B(n,m)} 
Let $n,m\geq 1$, and let $A_1,\dots,A_n$ be pairwise disjoint sets
such that $|A_i|=m$ for every $1\leq i\leq n$. We define $B(n,m)$ to be
the simplicial complex with vertex set
\[
\bigsqcup_{i=1}^n A_i
\]
whose simplices are the subsets containing at most one vertex from each
$A_i$. Equivalently,
\[
B(n,m)
=
\Bigl\{
\set{x_1,\dots,x_k} \mid~
1\leq i_1<\cdots<i_k\leq n
\text{ and }
x_j\in A_{i_j}\text{ for }1\leq j\leq k
\Bigr\}.
\]
We write ${s}B(n,m)$ for the simplicial set obtained as the singular realization of this simplicial complex, {where the chosen ordering of the vertices satisfies
$$
x<y
\qquad
\text{for every } x\in A_i,\; y\in A_j,\; \text{whenever } i<j;
$$
see Definition \ref{def:singular realization}.}  In the case {$n=3$ and} $m=2$, this is the usual octahedral
sphere.
\end{defn}

The canonical example of a PR box arises from the following {Bell} scenario, which can also be viewed as a $4$-circle (Example \ref{ex:circle}).

\begin{ex}\label{ex:4PRbox}
The simplicial complex $B(2,2)$ 
%(Definition \ref{def:B(n,m)}) 
has maximal simplices given by
$$
\set{x,y}, \;\; \set{x,y'}, \;\; \set{x',y}, \;\; \set{x',y'}.
$$
Consider $T=(B(2,2),\ZZ_2)$.
% (see Example \ref{ex:eventpreshef}).
We define $p \in \on{Emp}(\eE_{T})$ by setting
\begin{align*}
p_{\set{x,y'}}^{a}
&=p_{\set{x',y}}^{a}=p_{\set{x',y'}}^{a}=
\begin{cases}
\frac{1}{2} & \text{if} \;\; a \in \set{(0,0),(1,1)}    \\
0 & \text{otherwise,}
\end{cases}\\
p_{\set{x,y}}^{a}
&=
\begin{cases}
\frac{1}{2} & \text{if} \;\; a \in \set{(1,0),(0,1)}  \\
0 & \text{otherwise.}  
\end{cases}
\end{align*}
The empirical model $p$ is a contextual vertex of $\on{Emp}(\eE_{T})$. Moreover, it can be viewed as a $2$-order cycle distribution on a $4$-circle (Example~\ref{ex:circle}).
\end{ex}

\begin{pro}\label{pro:vertex(n,2,2)}
Consider the simplicial complex $\Sigma=B(n,2)$ %(Definition~\ref{def:B(n,m)}) be the simplicial complex whose
with maximal simplices
% are the sets consisting of one element from each of the following pairs:
$$
\{x_1,x'_1\}, \{x_2,x'_2\},\dots, \{x_n,x'_n\}.
$$
Let $T=(B(n,2),\ZZ_2)$ and define a distribution
$
p\in \on{Emp}(\eE_T)
$
by
\begin{equation}\label{eq:basisvertexAiceBobbbbbbbbbb}
p_{\{x_1,\dots,x_n\}}^{(a_1,\dots,a_n)}
=
\begin{cases}
\displaystyle \frac{1}{2^{\,n-1}} & \text{if } \sum_{i=1}^n a_i = 1, \\[6pt]
0 & \text{otherwise,}
\end{cases}
\qquad
p_A^{(a_1,\dots,a_n)}
=
\begin{cases}
\displaystyle \frac{1}{2^{\,n-1}} & \text{if } \sum_{i=1}^n a_i = 0, \\[6pt]
0 & \text{otherwise,}
\end{cases}
\end{equation}
for every maximal simplex $A\neq \{x_1,\dots,x_n\}$.
The empirical model $p$ is a vertex. 
\end{pro}

\begin{proof}
Let $F \colon \catC^{\op}_{\Sigma} \to \catSet$ be the event scenario $\eta_{\eE_T}(\kappa_{\eE_{T}}(p))$. The first hypothesis of Theorem \ref{thm:1vert} is clear. For the second hypothesis, let $G \in \Esubb(\eE_T)$ such that $G \leq F$. Since $G$ is non-trivial, there exists $(i_1,i_2,\dots,i_n) \in G(\set{x_1,x_2,\dots,x_n})$. 
Take 
$(i_1,i_2,\dots,i_n)\neq (j_1,j_2,\dots,j_n) \in F(\set{x_1,x_2,\dots,x_n})$. Since 
$$
i_1+i_2+\dots+i_n=1   \;\; \text{and} \;\;  j_1+j_2+\dots+j_n=1, 
$$
we have $$
(i_1-j_1)+(i_2-j_2)+\dots+(i_n-j_n)=0.
$$
Thus, the number of indices $k$ for which $i_k \neq j_k$ is even.
Let $k<s$ be two such indices with $i_k \neq j_k$ and $i_s \neq j_s$. Because $(i_1,i_2,\dots,i_n) \in G(\set{x_1,x_2,\dots,x_n})$, we have  
$$
(i_1,\dots,i_{k-1},i_{k+1},\dots,i_n) \in G(\set{x_1,\dots,x_{k-1},x_{k+1},\dots,x_n}).
$$ 
Since $G$ is locally surjective, $(i_1,\dots,i_{k-1},i_{k},i_{k+1},\dots,i_n) \notin G(\set{x_1,\dots,x_{k-1},x'_k,x_{k+1},\dots,x_n})$, and $i_k \neq j_k$, it follows that  
$$
(i_1,\dots,i_{k-1},j_{k},i_{k+1},\dots,i_n) \in G(\set{x_1,\dots,x_{k-1},x'_k,x_{k+1},\dots,x_n}).
$$
Consequently,
$$
(i_1,\dots,i_{k-1},j_{k},i_{k+1},\dots,i_{s-1},i_{s+1},\dots,i_n) \in G(\set{x_1,\dots,x_{k-1},x'_k,x_{k+1},\dots,x_{s-1},x_{s+1},\dots,x_n}).
$$
By the same argument for the index $s$, we get
$$
(i_1,\dots,i_{k-1},j_{k},i_{k+1},\dots,i_{s-1},i_s,i_{s+1},\dots,i_n) \in G(\set{x_1,\dots,x_{k-1},x'_k,x_{k+1},\dots,x_{s-1},x'_s,x_{s+1},\dots,x_n})
$$
and similarly,
$$
(i_1,\dots,i_{k-1},j_{k},i_{k+1},\dots,i_{s-1},i_s,i_{s+1},\dots,i_n) \in G(\set{x_1,\dots,x_{k-1},x_k,x_{k+1},\dots,x_{s-1},x'_s,x_{s+1},\dots,x_n})
$$
and finally, 
$$
(i_1,\dots,i_{k-1},j_{k},i_{k+1},\dots,i_{s-1},j_s,i_{s+1},\dots,i_n) \in G(\set{x_1,\dots,x_{k-1},x_k,x_{k+1},\dots,x_{s-1},x_s,x_{s+1},\dots,x_n})
$$
Since the number of indices $t$ with $i_t \neq j_t$ is even, repeating this process for all such indices yields
$$
(j_1,j_2,\dots,j_n) \in G(\set{x_1,x_2,\dots,x_n}).
$$
We proved that $G(\set{x_1,x_2,\dots,x_n})=F(\set{x_1,x_2,\dots,x_n})$. Theorem \ref{thm:1vert} implies that $p$ is a vertex.
\end{proof}
%
%{We now illustrate Proposition \ref{pro:vertex(n,2,2)} with the case of three parties.}

We illustrate the result in the case $n=3$. The distribution we will consider appears in 
\cite[Equation (29)]{barrett2005nonlocal} under the class of \emph{three-way nonlocal vertices}.

\begin{figure}[h]
\centering
\begin{tikzpicture}[
    scale=1.05,
    oriented/.style={
        line width=0.9pt,
        postaction={decorate},
        decoration={
            markings,
            mark=at position 0.58 with {\arrow{Stealth[length=2mm]}}
        }
    },
    edgecol/.style={
        line width=1.6pt,
        postaction={decorate},
        decoration={
            markings,
            mark=at position 0.58 with {\arrow{Stealth[length=2mm]}}
        }
    }
]

% vertices
\coordinate (A) at (-3.0,0);
\coordinate (P) at (-1.35,0);
\coordinate (Q) at (0,0);
\coordinate (S) at (1.35,0);
\coordinate (R) at (3.0,0);
\coordinate (T) at (0,2.45);
\coordinate (B) at (0,-2.45);

% gray regions
\fill[gray!25] (A)--(P)--(T)--cycle;
\fill[gray!25] (P)--(Q)--(T)--cycle;
\fill[gray!25] (Q)--(S)--(T)--cycle;
\fill[gray!25] (S)--(R)--(T)--cycle;

\fill[gray!25] (A)--(P)--(B)--cycle;
\fill[gray!25] (P)--(Q)--(B)--cycle;
\fill[gray!25] (Q)--(S)--(B)--cycle;
\fill[gray!25] (S)--(R)--(B)--cycle;

% colored outer edges
\draw[edgecol,red] (A) -- (T);
\draw[edgecol,red] (R) -- (T);

\draw[edgecol,olive] (A) -- (B);
\draw[edgecol,olive] (R) -- (B);

% black horizontal edges
\draw[oriented] (A) -- (P);
\draw[oriented] (Q) -- (P);
\draw[oriented] (Q) -- (S);
\draw[oriented] (R) -- (S);

% black upper internal edges
\draw[oriented] (P) -- (T);
\draw[oriented] (Q) -- (T);
\draw[oriented] (S) -- (T);

% black lower internal edges
\draw[oriented] (P) -- (B);
\draw[oriented] (Q) -- (B);
\draw[oriented] (S) -- (B);

% vertex labels
\node[left]  at (A) {$x$};
\node[below left=2pt and 1pt] at (P) {$y$};
\node[above right=2pt and 1pt] at (Q) {$x'$};
\node[below right=1pt and 1pt] at (S) {$y'$};
\node[right] at (R) {$x$};

\node[above] at (T) {$z$};
\node[below] at (B) {$z'$};

\end{tikzpicture}
\caption{The simplicial complex $B(3,2)$. The same colored edges are identified.}
\label{fig:base1}
\end{figure}

\begin{ex}\label{ex:AliceBobChar}
Consider
%Let 
$\Sigma=B(3,2)$ 
%be the simplicial complex with the following maximal simplices:
with maximal simplices
$$
\set{x,y,z},\; \set{x,y,z'}, \; \set{x,y',z},\; \set{x,y',z'},\;\set{x',y,z},\; \set{x',y,z'}, \; \set{x',y',z},\; \set{x',y',z'}.
$$    
For $T=(B(3,2),\ZZ_2)$, we define 
%an empirical model 
$p \in \on{Emp}(\eE_{T})$ as follows:
\begin{equation}\label{eq:basisvertexAiceBob}
p_{\set{x,y,z}}^{(a,b,c)}=
\begin{cases}
\frac{1}{4} & \text{if} \;\; a+b+c=1    \\
0 & \text{otherwise,}
\end{cases}
\;\;\;\;\;,\;\;\;\; \;
p_{A}^{(a,b,c)}=
\begin{cases}
\frac{1}{4} & \text{if} \;\; a+b+c=0    \\
0 & \text{otherwise,}
\end{cases}
\end{equation}
for every maximal simplex $A \neq \set{x,y,z}$.
We prove that this distribution is a vertex using Theorem \ref{thm:1vert}.
Let us write $F:=\eta_{\eE_T}(\kappa_{\eE_T}(p))$ for the event scenario associated to the possibilistic collapse of $p$.
%$F \colon \catC^{\op}_{\Sigma} \to \catSet$ be the event scenario $\eta_{\eE_T}(\kappa_{\eE_T}(p))$. 
Then $F$ is strongly connected, hence, satisfies the first condition of Theorem \ref{thm:1vert}.
% is clear. 
For the second condition, let $G \in \Esubb(\eE_{T})$ such that $G \leq F$. Since $G$ is non-trivial, assume that $(1,0,0) \in G(\{x,y,z\})$.  
Then $(0,0) \in G(\{y,z\})$. Because $G$ is locally surjective, we obtain
$$
(0,0,0) \in G(\{x',y,z\}).
$$
By the same argument, we get
$$
(0,0,0) \in G(\{x',y',z\}), \;
(0,0,0) \in G(\{x,y',z\}), \; \text{and} \;(0,1,0) \in G(\{x,y,z\})
$$  
Consequently,
$$
(0,1,1) \in G(\{x,y,z'\}), \; 
(0,1,1) \in G(\{x',y,z'\}), \; \text{and} \;
(0,1,1) \in G(\{x',y,z\}),
$$
and hence
$$
(1,1,1) \in G(\{x,y,z\}).
$$
On the other hand, we also have
$$
(0,1,1) \in G(\{x',y',z\}), \;
(0,1,1) \in G(\{x,y',z\}), \; \text{and} \;
(0,0,1) \in G(\{x,y,z\}).
$$
Altogether, this shows that
$$
G(\{x,y,z\}) = F(\{x,y,z\}).
$$
Note that for every maximal simplex $A$ and every $x \in A$, we have
$|F(A)| = |F(A \setminus \{x\})| = 4$, which implies that the map
$$
F(A) \longrightarrow F(A \setminus \{x\})
$$
is an isomorphism.  
Therefore, by Theorem \ref{thm:1vert} $p$ is a 
vertex. We could also conclude that $p$ is a vertex by {by viewing it as a simplicial distribution on $f_{sB(3,2),2}$ and applying the topological criterion}, namely Theorem~\ref{thm:strongisvertx}. See Figure~\ref{fig:base1-bundle} to observe that  $f|_{\zeta_{f}(\kappa_f(p))}$ is strongly connected.
%$\zeta_{f}(\kappa_f(p))$ is strongly connected 
%by $f|_{\zeta_{f}(\kappa_f(p))}$. 
\end{ex}

\begin{figure}[h]
\centering
\begin{tikzpicture}[
    scale=0.85,
    oriented/.style={
        line width=0.85pt,
        postaction={decorate},
        decoration={
            markings,
            mark=at position 0.58 with {\arrow{Stealth[length=2mm]}}
        }
    },
    edgecol/.style={
        line width=1.5pt,
        postaction={decorate},
        decoration={
            markings,
            mark=at position 0.58 with {\arrow{Stealth[length=2mm]}}
        }
    },
    vlabel/.style={font=\small, inner sep=1pt}
]

\newcommand{\DrawPanel}{%

% vertices
\coordinate (A) at (-3.0,0);
\coordinate (P) at (-1.55,0);
\coordinate (Q) at (0,0);
\coordinate (S) at (1.55,0);
\coordinate (R) at (3.0,0);
\coordinate (T) at (0,2.55);
\coordinate (D) at (-1.55,-2.15);
\coordinate (B) at (0,-2.15);

% gray regions
\fill[gray!25] (A)--(P)--(T)--cycle;
\fill[gray!25] (P)--(Q)--(T)--cycle;
\fill[gray!25] (Q)--(S)--(T)--cycle;
\fill[gray!25] (S)--(R)--(T)--cycle;
\fill[gray!25] (A)--(P)--(D)--cycle;
\fill[gray!25] (P)--(Q)--(B)--cycle;
\fill[gray!25] (Q)--(S)--(B)--cycle;
\fill[gray!25] (S)--(R)--(B)--cycle;

% colored edges
\draw[edgecol,\colAT] (A) -- (T);
\draw[edgecol,\colTR] (R) -- (T);
\draw[edgecol,\colRB] (R) -- (B);
\draw[edgecol,\colAD] (A) -- (D);
\draw[edgecol,\colPD] (P) -- (D);
\draw[edgecol,\colPB] (P) -- (B);

% black oriented edges
\draw[oriented] (A) -- (P);
\draw[oriented] (Q) -- (P);
\draw[oriented] (Q) -- (S);
\draw[oriented] (R) -- (S);

\draw[oriented] (P) -- (T);
\draw[oriented] (Q) -- (T);
\draw[oriented] (S) -- (T);

\draw[oriented] (Q) -- (B);
\draw[oriented] (S) -- (B);

% vertex labels
\node[vlabel, above]       at (T) {$\labT$};
\node[vlabel, left]        at (A) {$\labA$};
\node[vlabel, above left]  at (P) {$\labP$};
\node[vlabel, above right, xshift=1pt, yshift=1pt] at (Q) {$\labQ$};
\node[vlabel, above right] at (S) {$\labS$};
\node[vlabel, right]       at (R) {$\labR$};
\node[vlabel, below]       at (D) {$\labD$};
\node[vlabel, below]       at (B) {$\labB$};
}

% ------------------------------------------------------------
% top-left panel
% ------------------------------------------------------------
\begin{scope}[shift={(0,0)}]
\def\labT{1}\def\labA{0}\def\labP{0}\def\labQ{1}
\def\labS{0}\def\labR{1}\def\labD{0}\def\labB{1}
\def\colAT{green!70!black}
\def\colTR{yellow!80!green}
\def\colRB{blue!70}
\def\colAD{pink!70}
\def\colPD{violet!70}
\def\colPB{gray!80!blue}
\DrawPanel
\end{scope}

% ------------------------------------------------------------
% top-right panel
% ------------------------------------------------------------
\begin{scope}[shift={(8.4,0)}]
\def\labT{1}\def\labA{1}\def\labP{1}\def\labQ{0}
\def\labS{1}\def\labR{0}\def\labD{0}\def\labB{1}
\def\colAT{yellow!80!green}
\def\colTR{green!70!black}
\def\colRB{orange!85!yellow}
\def\colAD{olive}
\def\colPD{cyan!45}
\def\colPB{orange!30!pink}
\DrawPanel
\end{scope}

% ------------------------------------------------------------
% bottom-left panel
% ------------------------------------------------------------
\begin{scope}[shift={(0,-5.7)}]
\def\labT{0}\def\labA{1}\def\labP{0}\def\labQ{0}
\def\labS{0}\def\labR{0}\def\labD{1}\def\labB{0}
\def\colAT{red}
\def\colTR{brown!75!red}
\def\colRB{pink!65}
\def\colAD{blue!70}
\def\colPD{gray!80!blue}
\def\colPB{violet!70}
\DrawPanel
\end{scope}

% ------------------------------------------------------------
% bottom-right panel
% ------------------------------------------------------------
\begin{scope}[shift={(8.4,-5.7)}]
\def\labT{0}\def\labA{0}\def\labP{1}\def\labQ{1}
\def\labS{1}\def\labR{1}\def\labD{1}\def\labB{0}
\def\colAT{brown!75!red}
\def\colTR{red}
\def\colRB{olive}
\def\colAD{orange!85!yellow}
\def\colPD{orange!30!pink}
\def\colPB{cyan!45}
\DrawPanel
\end{scope}

\end{tikzpicture}
\caption{{The space $\zeta_f(\kappa_f(p))$ corresponding to $p$ of} Example \ref{ex:AliceBobChar}.}
\label{fig:base1-bundle}
\end{figure}

 {
We conclude this section by presenting the other two three-way nonlocal vertices from \cite{barrett2005nonlocal} as simplicial distributions on $f_{sB(3,2),2}$ and showing that they satisfy the condition of Theorem~\ref{thm:strongisvertx}.

Suppose that 
$$
(x,y,z),\; (x,y,z'), \; (x,y',z),\; (x,y',z'),\;(x',y,z),\; (x',y,z'), \; (x',y',z),\; (x',y',z')    
$$
are the generators of the simplicial set $sB(3,2)$.

Let $\Omega_1=\set{(x,y,z'),(x,y',z)}$. We define  
$p \in \sDist(f_{sB(3,2),2})$ by 
\begin{equation}\label{eq:vertex27}
p_{\sigma}^{(a,b,c)}=
\begin{cases}
\frac{1}{4} & \text{if} \;\;  a+b+c=1    \\
0 & \text{otherwise,}
\end{cases}
\;\;\;\;\;,\;\;\;\; \;
p_{\tau}^{(a,b,c)}=
\begin{cases}
\frac{1}{4} & \text{if} \;\; a+b+c=0    \\
0 & \text{otherwise,}
\end{cases}
\end{equation}
for generators $\sigma\in \Omega_1$ and  $\tau \notin \Omega_1$.
This simplicial distribution represents the vertex given in \cite[Equation~(27)]{barrett2005nonlocal}. Figure \ref{fig:eq27} illustrates the geometric realization of its support.

Next, let $\Omega_2=\set{(x,y',z'),(x',y',z),(x',y,z'),(x',y',z')}$ and define a simplicial distribution
%an empirical model 
$q \in \sDist(f_{sB(3,2),2})$ by 
\begin{equation}\label{eq:vertex28}
q_{\sigma}^{(a,b,c)}=
\begin{cases}
\frac{1}{4} & \text{if} \;\;  a+b+c=1    \\
0 & \text{otherwise,}
\end{cases}
\;\;\;\;\;,\;\;\;\; \;
q_{\tau}^{(a,b,c)}=
\begin{cases}
\frac{1}{4} & \text{if} \;\; a+b+c=0    \\
0 & \text{otherwise,}
\end{cases}
\end{equation}
for generators $\sigma\in \Omega_2$ and  $\tau \notin \Omega_2$.
This represents the vertex given in \cite[Equation~(28)]{barrett2005nonlocal} and Figure \ref{fig:eq28} illustrates the geometrical realization of its support.
}

 \begin{figure}[h]
\centering
\begin{tikzpicture}[
    scale=0.78,
    oriented/.style={
        line width=0.85pt,
        postaction={decorate},
        decoration={
            markings,
            mark=at position 0.58 with {\arrow{Stealth[length=2mm]}}
        }
    },
    edgecol/.style={
        line width=1.55pt,
        postaction={decorate},
        decoration={
            markings,
            mark=at position 0.58 with {\arrow{Stealth[length=2mm]}}
        }
    },
    vlabel/.style={font=\small, inner sep=1pt}
]

\newcommand{\DrawFanPanel}{%

% vertices
\coordinate (A) at (-3.0,0);
\coordinate (P) at (-1.45,0);
\coordinate (Q) at (0,0);
\coordinate (S) at (1.45,0);
\coordinate (R) at (3.0,0);
\coordinate (T) at (0,2.25);
\coordinate (D) at (-1.55,-2.0);
\coordinate (B) at (0,-2.0);
\coordinate (E) at (1.55,-2.0);

% gray regions
\fill[gray!25] (A)--(P)--(T)--cycle;
\fill[gray!25] (P)--(Q)--(T)--cycle;
\fill[gray!25] (Q)--(S)--(T)--cycle;
\fill[gray!25] (S)--(R)--(T)--cycle;

\fill[gray!25] (A)--(P)--(D)--cycle;
\fill[gray!25] (P)--(Q)--(B)--cycle;
\fill[gray!25] (Q)--(S)--(B)--cycle;
\fill[gray!25] (S)--(R)--(E)--cycle;

% colored edges
\draw[edgecol,\colAT] (A) -- (T);
\draw[edgecol,\colTR] (R) -- (T);

\draw[edgecol,\colAD] (A) -- (D);
\draw[edgecol,\colPD] (P) -- (D);
\draw[edgecol,\colPB] (P) -- (B);

\draw[edgecol,\colSB] (S) -- (B);
\draw[edgecol,\colSE] (S) -- (E);
\draw[edgecol,\colRE] (R) -- (E);

% black oriented edges
\draw[oriented] (A) -- (P);
\draw[oriented] (Q) -- (P);
\draw[oriented] (Q) -- (S);
\draw[oriented] (R) -- (S);

\draw[oriented] (P) -- (T);
\draw[oriented] (Q) -- (T);
\draw[oriented] (S) -- (T);

\draw[oriented] (Q) -- (B);

% vertex labels
\node[vlabel, above] at (T) {$\labT$};
\node[vlabel, left] at (A) {$\labA$};
\node[vlabel, above left] at (P) {$\labP$};
\node[vlabel, above right, xshift=1pt, yshift=1pt] at (Q) {$\labQ$};
\node[vlabel, above right] at (S) {$\labS$};
\node[vlabel, right] at (R) {$\labR$};
\node[vlabel, below] at (D) {$\labD$};
\node[vlabel, below] at (B) {$\labB$};
\node[vlabel, below] at (E) {$\labE$};
}

% ------------------------------------------------------------
% top-left panel
% ------------------------------------------------------------
\begin{scope}[shift={(0,0)}]
\def\labT{0}\def\labA{0}\def\labP{0}\def\labQ{0}
\def\labS{1}\def\labR{1}\def\labD{0}\def\labB{1}\def\labE{0}
\def\colAT{yellow}
\def\colTR{red!65!black}
\def\colAD{orange!75}
\def\colPD{green!70!black}
\def\colPB{red}
\def\colSB{pink!65}
\def\colSE{blue!35!black}
\def\colRE{orange!30}
\DrawFanPanel
\end{scope}

% ------------------------------------------------------------
% top-right panel
% ------------------------------------------------------------
\begin{scope}[shift={(7.6,0)}]
\def\labT{0}\def\labA{1}\def\labP{1}\def\labQ{1}
\def\labS{0}\def\labR{0}\def\labD{0}\def\labB{1}\def\labE{0}
\def\colAT{red!65!black}
\def\colTR{yellow}
\def\colAD{orange!30}
\def\colPD{cyan!40}
\def\colPB{purple!70}
\def\colSB{blue!60!black}
\def\colSE{lime!80}
\def\colRE{orange!70}
\DrawFanPanel
\end{scope}

% ------------------------------------------------------------
% bottom-left panel
% ------------------------------------------------------------
\begin{scope}[shift={(0,-5.1)}]
\def\labT{1}\def\labA{1}\def\labP{0}\def\labQ{1}
\def\labS{1}\def\labR{0}\def\labD{1}\def\labB{0}\def\labE{1}
\def\colAT{blue!70}
\def\colTR{yellow!25}
\def\colAD{olive!80}
\def\colPD{red}
\def\colPB{green!70!black}
\def\colSB{blue!35!black}
\def\colSE{pink!65}
\def\colRE{brown!60}
\DrawFanPanel
\end{scope}

% ------------------------------------------------------------
% bottom-right panel
% ------------------------------------------------------------
\begin{scope}[shift={(7.6,-5.1)}]
\def\labT{1}\def\labA{0}\def\labP{1}\def\labQ{0}
\def\labS{0}\def\labR{1}\def\labD{1}\def\labB{0}\def\labE{1}
\def\colAT{yellow!25}
\def\colTR{blue!70}
\def\colAD{brown!60}
\def\colPD{purple!70}
\def\colPB{cyan!45}
\def\colSB{lime!80}
\def\colSE{blue!60!black}
\def\colRE{olive!80}
\DrawFanPanel
\end{scope}

\end{tikzpicture}
\caption{{The space $\zeta_{f}\bigl(\kappa_f(p)\bigr)$ corresponding to the vertex defined in Equation (\ref{eq:vertex27}).}}
\label{fig:eq27}
\end{figure}

\begin{figure}[h]
\centering
\begin{tikzpicture}[
    scale=0.78,
    oriented/.style={
        line width=0.85pt,
        postaction={decorate},
        decoration={
            markings,
            mark=at position 0.58 with {\arrow{Stealth[length=2mm]}}
        }
    },
    edgecol/.style={
        line width=1.55pt,
        postaction={decorate},
        decoration={
            markings,
            mark=at position 0.58 with {\arrow{Stealth[length=2mm]}}
        }
    },
    vlabel/.style={font=\small, inner sep=1pt}
]

\newcommand{\DrawFourDownPanel}{%

% vertices
\coordinate (A) at (-3.0,0);
\coordinate (P) at (-1.5,0);
\coordinate (Q) at (0,0);
\coordinate (S) at (1.5,0);
\coordinate (R) at (3.0,0);
\coordinate (T) at (0,2.25);

\coordinate (D) at (-2.25,-1.9);
\coordinate (E) at (-0.75,-1.9);
\coordinate (F) at (0.75,-1.9);
\coordinate (G) at (2.25,-1.9);

% gray regions
\fill[gray!25] (A)--(P)--(T)--cycle;
\fill[gray!25] (P)--(Q)--(T)--cycle;
\fill[gray!25] (Q)--(S)--(T)--cycle;
\fill[gray!25] (S)--(R)--(T)--cycle;

\fill[gray!25] (A)--(P)--(D)--cycle;
\fill[gray!25] (P)--(Q)--(E)--cycle;
\fill[gray!25] (Q)--(S)--(F)--cycle;
\fill[gray!25] (S)--(R)--(G)--cycle;

% colored edges
\draw[edgecol,\colAT] (A) -- (T);
\draw[edgecol,\colTR] (R) -- (T);

\draw[edgecol,\colAD] (A) -- (D);
\draw[edgecol,\colPD] (P) -- (D);

\draw[edgecol,\colPE] (P) -- (E);
\draw[edgecol,\colQE] (Q) -- (E);

\draw[edgecol,\colQF] (Q) -- (F);
\draw[edgecol,\colSF] (S) -- (F);

\draw[edgecol,\colSG] (S) -- (G);
\draw[edgecol,\colRG] (R) -- (G);

% black oriented edges
\draw[oriented] (A) -- (P);
\draw[oriented] (Q) -- (P);
\draw[oriented] (Q) -- (S);
\draw[oriented] (R) -- (S);

\draw[oriented] (P) -- (T);
\draw[oriented] (Q) -- (T);
\draw[oriented] (S) -- (T);

% vertex labels
\node[vlabel, above] at (T) {$\labT$};
\node[vlabel, left] at (A) {$\labA$};
\node[vlabel, above left] at (P) {$\labP$};
\node[vlabel, above right, xshift=1pt, yshift=1pt] at (Q) {$\labQ$};
\node[vlabel, above right] at (S) {$\labS$};
\node[vlabel, right] at (R) {$\labR$};

\node[vlabel, below] at (D) {$\labD$};
\node[vlabel, below] at (E) {$\labE$};
\node[vlabel, below] at (F) {$\labF$};
\node[vlabel, below] at (G) {$\labG$};
}

% ------------------------------------------------------------
% top-left panel
% ------------------------------------------------------------
\begin{scope}[shift={(0,0)}]
\def\labT{0}\def\labA{0}\def\labP{0}\def\labQ{0}
\def\labS{1}\def\labR{1}
\def\labD{0}\def\labE{1}\def\labF{0}\def\labG{1}

\def\colAT{cyan!45}
\def\colTR{lime!80}
\def\colAD{brown!50}
\def\colPD{red}
\def\colPE{yellow!25}
\def\colQE{olive!20}
\def\colQF{brown!75}
\def\colSF{pink!65}
\def\colSG{yellow}
\def\colRG{orange!75}

\DrawFourDownPanel
\end{scope}

% ------------------------------------------------------------
% top-right panel
% ------------------------------------------------------------
\begin{scope}[shift={(7.6,0)}]
\def\labT{0}\def\labA{1}\def\labP{1}\def\labQ{1}
\def\labS{0}\def\labR{0}
\def\labD{0}\def\labE{1}\def\labF{0}\def\labG{1}

\def\colAT{lime!80}
\def\colTR{cyan!45}
\def\colAD{purple!70}
\def\colPD{orange!50}
\def\colPE{olive}
\def\colQE{red!60!black}
\def\colQF{blue!70}
\def\colSF{green!30}
\def\colSG{blue!30}
\def\colRG{green!70!black}

\DrawFourDownPanel
\end{scope}

% ------------------------------------------------------------
% bottom-left panel
% ------------------------------------------------------------
\begin{scope}[shift={(0,-5.0)}]
\def\labT{1}\def\labA{1}\def\labP{0}\def\labQ{1}
\def\labS{1}\def\labR{0}
\def\labD{1}\def\labE{0}\def\labF{1}\def\labG{0}

\def\colAT{olive}
\def\colTR{gray!60!blue}
\def\colAD{orange!80}
\def\colPD{yellow!25}
\def\colPE{red}
\def\colQE{blue!70}
\def\colQF{red!65!black}
\def\colSF{yellow}
\def\colSG{pink!65}
\def\colRG{brown!50}

\DrawFourDownPanel
\end{scope}

% ------------------------------------------------------------
% bottom-right panel
% ------------------------------------------------------------
\begin{scope}[shift={(7.6,-5.0)}]
\def\labT{1}\def\labA{0}\def\labP{1}\def\labQ{0}
\def\labS{0}\def\labR{1}
\def\labD{1}\def\labE{0}\def\labF{1}\def\labG{0}

\def\colAT{gray!60!blue}
\def\colTR{olive}
\def\colAD{green!70!black}
\def\colPD{olive}
\def\colPE{orange!50}
\def\colQE{brown!75}
\def\colQF{olive!20}
\def\colSF{blue!30}
\def\colSG{green!30}
\def\colRG{purple!70}

\DrawFourDownPanel
\end{scope}

\end{tikzpicture}
\caption{{The space $\zeta_{f}\bigl(\kappa_f(q)\bigr)$ corresponding to the vertex defined in Equation (\ref{eq:vertex28}).}}
\label{fig:eq28}
\end{figure}

\subsection{Comparing {extremality conditions}}\label{subsect:compcharac}

We conclude the examples section with an extremal distribution that is detected by the categorical {criterion} but not by the topological {criterion}. In addition, we show that the categorical condition is not necessary for extremality.

\begin{figure}[h]
\centering
\begin{tikzpicture}[
    scale=1.1,
    vertexlabel/.style={font=\small, inner sep=1pt}
]

% vertices
\coordinate (x1) at (0,2.2);
\coordinate (x3) at (0,-2.2);
\coordinate (x4) at (-2.6,0);
\coordinate (x5) at (2.6,0);
% vertex at x_2
\fill (0,0) circle (2pt);

% edges
\draw[line width=1.2pt] (x4) -- (x1);
\draw[line width=1.2pt] (x1) -- (x5);
\draw[line width=1.2pt] (x5) -- (x3);
\draw[line width=1.2pt] (x3) -- (x4);
\draw[line width=1.2pt] (x1) -- (x3);

% labels
\node[vertexlabel, above] at (x1) {$x_1$};
\node[vertexlabel, below] at (x3) {$x_3$};
\node[vertexlabel, left]  at (x4) {$x_4$};
\node[vertexlabel, right] at (x5) {$x_5$};

% middle label on vertical edge
\node[vertexlabel, right, xshift=2pt] at (0,0) {$x_2$};

\end{tikzpicture}
\caption{The graph $\Sigma$ of Example \ref{ex:graphSigma}.}
\label{fig:graph1}
\end{figure}
%\begin{figure}[h!] 
%  \centering
%  \includegraphics[width=0.34\linewidth]{graph1.png}
%\caption{The graph $\Sigma$ of Example \ref{ex:graphSigma}}
%\label{fig:graph1}
%\end{figure}

\begin{ex}\label{ex:graphSigma}
Let $\Sigma$ be the graph with edges
$$
\{x_1,x_2\}, \{x_2,x_3\}, \{x_1,x_4\}, \{x_1,x_5\}, \{x_3,x_4\}, \{x_3,x_5\}.
$$
See Figure \ref{fig:graph1}.
{Consider the presheaf $F=\eE_{(\Sigma,\ZZ_3)}$,
and define $p\in \on{Emp}(F)$ by specifying its values on the maximal simplices of $\Sigma$:
\[
p_{\{x_1,x_2\}}^{a}=
\begin{cases}
\frac14 & \text{if } a\in \{(0,0),(1,0)\},\\
\frac12 & \text{if } a=(2,2),\\
0 & \text{otherwise,}
\end{cases}
\qquad
p_{\{x_2,x_3\}}^{a}=
\begin{cases}
\frac14 & \text{if } a\in \{(2,0),(2,1)\},\\
\frac12 & \text{if } a=(0,2),\\
0 & \text{otherwise,}
\end{cases}
\]
and
\[
p_{\{x_3,x_5\}}^{a}=
\begin{cases}
\frac14 & \text{if } a\in \{(0,1),(1,0)\},\\
\frac12 & \text{if } a=(2,2),\\
0 & \text{otherwise,}
\end{cases}
\]
while
\[
p_{\{x_1,x_4\}}^{a}
=
p_{\{x_3,x_4\}}^{a}
=
p_{\{x_1,x_5\}}^{a}
=
\begin{cases}
\frac14 & \text{if } a\in \{(0,0),(1,1)\},\\
\frac12 & \text{if } a=(2,2),\\
0 & \text{otherwise.}
\end{cases}
\]
}

%\begin{align*}
%p_{\set{x_1,x_2}}^{(0,0)}=p_{\set{x_1,x_2}}^{(1,0)} &= \frac{1}{4},
%\qquad
%p_{\set{x_1,x_2}}^{(2,2)}=\frac{1}{2}, \\
%p_{\set{x_2,x_3}}^{(2,0)}=p_{\set{x_2,x_3}}^{(2,1)}&=\frac{1}{4},
%\qquad
%p_{\set{x_2,x_3}}^{(0,2)}=\frac{1}{2}, \\ 
%p_{\set{x_1,x_4}}^{(0,0)}=p_{\set{x_1,x_4}}^{(1,1)}&=\frac{1}{4},\qquad
%p_{\set{x_1,x_4}}^{(2,2)}= \frac{1}{2}, \\
%p_{\set{x_3,x_4}}^{(0,0)}=p_{\set{x_3,x_4}}^{(1,1)}
%&=\frac{1}{4},\qquad
%p_{\set{x_3,x_4}}^{(2,2)}=\frac{1}{2}, \\
%p_{\set{x_1,x_5}}^{(0,0)}=p_{\set{x_1,x_5}}^{(1,1)}
%&=\frac{1}{4},\qquad p_{\set{x_1,x_5}}^{(2,2)}
%=\frac{1}{2}, \\
%p_{\set{x_3,x_5}}^{(0,1)}=p_{\set{x_3,x_5}}^{(1,0)} &= \frac{1}{4}, \qquad 
%p_{\set{x_3,x_5}}^{(2,2)}
%=
%\frac{1}{2}.
%\end{align*}
%
We show that $p$ is a vertex satisfying the conditions of Theorem~\ref{thm:1vert}.  
The following diagram contains all maximal simplices of $\Sigma$:
\begin{equation}\label{dia:thediagofSigma}
\begin{tikzcd}[column sep=small,row sep=small]
&& \set{x_1} \arrow[ld,hook]  \arrow[rd,hook] \arrow[d,hook] &&  \\
& \set{x_1,x_5} & \set{x_1,x_2} & \set{x_1,x_4} &   \\
\set{x_5} \arrow[ru,hook] \arrow[rd,hook]  &&&& \set{x_4}  \arrow[lu,hook] \arrow[ld,hook]  \\
& \set{x_3,x_5}   & \set{x_2,x_3} & \set{x_3,x_4} & \\
&&  \set{x_3} \arrow[lu,hook]  \arrow[ru,hook] \arrow[u,hook]  &&
\end{tikzcd}
\end{equation}
Applying the functor $\eta_F(\kappa_F(p))$ to Diagram (\ref{dia:thediagofSigma}) yields a diagram whose arrows are all isomorphisms.

%\ak{Aziz: Fix since the theorem is not if and only if} 
{
%Moreover, $p$ is the unique 
%empirical model mapped by $\kappa_F$ to $\kappa_F(p)$. Hence, by Corollary~\ref{cor:pvertexifand}, $\Phi(p)$ is a vertex.
Now let $G\colon {\catC_{\Sigma}^\op} \to \catSet$  be an event scenario satisfying $G \leq \eta_F(\kappa_F(p))$.
Since $
\emptyset \neq G({x_2}) \subseteq  \eta_F(\kappa_F(p))(\set{x_2})=\set{0,2}
$, we may assume that $0 \in G(\set{x_2})$; the argument is identical if we instead start with $2\in G(\{x_2\})$. 
By the local surjectivity of $G$, 
$$
(0,2) \in G(\set{x_2,x_3}),
$$ 
and hence $2 \in G(\set{x_3})$. Applying local surjectivity repeatedly, we obtain
$$
(2,2) \in G(\set{x_3,x_4}) \Rightarrow 2 \in G(\set{x_4}) \Rightarrow (2,2) \in G(\set{x_1,x_4}) \Rightarrow 2 \in G(\set{x_1}) \Rightarrow  
(2,2) \in G(\set{x_1,x_2}).
$$
Therefore, $2 \in G(\set{x_2})$. It follows that at least one of
$(2,0),(2,1)$ belongs to $G(\{x_2,x_3\})$. Without loss of generality, assume that
$$
(2,0)\in G(\set{x_2,x_3}).
$$ 
Then $0\in G(\set{x_3})$. Applying the same argument once more, we obtain
$$
(0,1)\in G(\{x_3,x_5\})
\Rightarrow
1\in G(\{x_5\})
\Rightarrow
(1,1)\in G(\{x_1,x_5\})
\Rightarrow
1\in G(\{x_1\}),
$$
and therefore
$$
(1,1)\in G(\{x_1,x_4\})
\Rightarrow
1\in G(\{x_4\})
\Rightarrow
(1,1)\in G(\{x_3,x_4\})
\Rightarrow
1\in G(\{x_3\}).
$$
So we conclude that 
$$
(2,1) \in G(\set{x_2,x_3}).
$$
Consequently,
$$
G(\{x_2,x_3\})
=
\eta_F(\kappa_F(p))(\{x_2,x_3\}).
$$
Since $\{x_2,x_3\}$ is a maximal simplex of $\Sigma$, the hypotheses of Theorem~\ref{thm:1vert} are satisfied. Therefore, $p$ is a vertex.
}

For the singular realization, fix the order
$x_1<x_2<x_3<x_4<x_5$, and let $f:=f_F$. Set
\[
E' := \zeta_f\bigl(\kappa_f(\Phi_F(p))\bigr),
\qquad
g := f|_{E'}.
\]
Not all simplices in $E'$ are $g$-strongly connected. Indeed,
otherwise Proposition~\ref{pro:uniqeextension} would imply that all
probability values are equal, which is not the case.
\end{ex}
%
%\
Theorem~\ref{thm:1vert} detects more extremal distributions than
Theorem~\ref{thm:strongisvertx}, but it still does not detect all vertices.
The following example illustrates this.

\begin{ex}
Consider the {presheaf} $F=\eE_{(\Sigma,\ZZ_3)}$, where $\Sigma$ is
the bipartite graph with vertex set
\[
\set{x_1,x_2,y_1,y_2,y_3,y_4}
\]
and edge set
\[
\set{x_1,y_1},\ \set{x_1,y_2},\ \set{x_1,y_3},\ \set{x_1,y_4},\ 
\set{x_2,y_1},\ \set{x_2,y_2},\ \set{x_2,y_3},\ \set{x_2,y_4}.
\]
We define the distribution
$p\in \on{Emp}(\eE_{(\Sigma,\ZZ_3)})$ by setting
\begin{align*}
p_{\set{x_1,y_1}}^{a}
=
p_{\set{x_1,y_2}}^{a}
=
p_{\set{x_1,y_3}}^{a}
=
p_{\set{x_1,y_4}}^{a}
&=
\begin{cases}
\frac{1}{2} & \text{if } a=(0,0), \\
\frac{1}{4} & \text{if } a\in \set{(1,2),(2,1)}, \\
0 & \text{otherwise,}
\end{cases}
\\[1em]
p_{\set{x_2,y_1}}^{a}
&=
\begin{cases}
\frac{1}{4} & \text{if } a\in \set{(0,0),(0,1),(1,2),(2,0)}, \\
0 & \text{otherwise,}
\end{cases}
\\[1em]
p_{\set{x_2,y_2}}^{a}
&=
\begin{cases}
\frac{1}{4} & \text{if } a\in \set{(0,0),(0,2),(1,0),(2,1)}, \\
0 & \text{otherwise,}
\end{cases}
\\[1em]
p_{\set{x_2,y_3}}^{a}
&=
\begin{cases}
\frac{1}{4} & \text{if } a\in \set{(0,0),(0,2),(1,1),(2,0)}, \\
0 & \text{otherwise,}
\end{cases}
\\[1em]
p_{\set{x_2,y_4}}^{a}
&=
\begin{cases}
\frac{1}{4} & \text{if } a\in \set{(0,1),(0,2),(1,0),(2,0)}, \\
0 & \text{otherwise.}
\end{cases}
\end{align*}
This is the presheaf version of the vertex given in
\cite[Example~4.4]{kharoof2026vertex}. However, the hypotheses of
Theorem~\ref{thm:1vert} are not satisfied, since
\[
\left|\eta_F(\kappa_F(p))(\set{x_1,y_1})\right|=3,
\qquad
\left|\eta_F(\kappa_F(p))(\set{x_2,y_1})\right|=4.
\]
\end{ex}

%{===}

\bibliography{bib.bib}
\bibliographystyle{ieeetr}

\appendix

%\section{The Grothendieck and the relative Grothendieck constructions}
\section{Relative Grothendieck construction}
\label{sec:gro}

This section recalls the usual Grothendieck construction~\cite{mac2013categories} and presents a relative version, introduced in~\cite{kharoof2025simplicial}, that extends to $2$-categories~\cite[Section~XII.3]{mac2013categories}.

\begin{defn}\label{def:Grothconst}
Let $\catC$ be a category, and let $\catCat$ be the category of locally small categories.
Given a functor $F\colon \catC\to \catCat$, its \emph{Grothendieck construction} is the category $\int_{\catC} F$ defined as follows:
\begin{itemize}
\item The objects are pairs $(c,x)$ with $c\in\catC$ and $x\in F(c)$.
\item A morphism $(c,x)\to(d,y)$ consists of a morphism $h\colon c\to d$ in $\catC$ together with a morphism $\gamma\colon F(h)(x)\to y$ in $F(d)$.
\end{itemize}
\end{defn}
The Grothendieck construction is functorial; that is, it defines a functor from {the functor category}
$[\catC,\catCat]$ to $\catCat$.
In addition, there is a canonical projection
$$
\int_{\catC} F\to \catC
$$
which forgets the second component.
There is also a contravariant version of the Grothendieck construction for a functor $F\colon \catC^{\op}\to\catCat$. The objects are again pairs $(c,x)$, while a morphism $(c,x)\to(d,y)$ now consists of a morphism $h\colon d\to c$ in $\catC$ and a map $\gamma\colon F(h)(x)\to y$ in $F(d)$.

%\begin{pro}\label{pro:equileadsisomo}
%Given functors $F,G\colon \catC^\op \to \catCat$ with a natural isomorphism $F\Rightarrow G$. Then 
%there is an equivalence between the associated Grothendieck constructions
%$$
%\int_{\catC^{\op}} F \xrightarrow{\simeq} \int_{\catC^{\op}} G.
%$$
%\end{pro} 

%An important special case of the (contravariant) Grothendieck construction is when the functor $F$ takes values in sets instead of categories. The Grothendieck construction of such a functor $F\colon\catC\to \catSet$ is called the \emph{category of elements}. We will write $\catC_F$ for this category. Its objects consists of pairs $(c,x)$ where $c$ is an object of $\catC$ and $x\in F(c)$. A morphism $(c,x)\to (d,y)$ in this category is given by a morphism ${h}\colon c\to d$ of $\catC$ such that $F(h)(x)=y$.  

%We next introduce a version of the Grothendieck construction adapted to $2$-categorical settings.
A \emph{$2$-category} consists of:
\begin{itemize}
\item objects,
\item $1$-morphisms between objects,
\item $2$-morphisms between $1$-morphisms.
\end{itemize}
Our main example is $\catCat$, whose $1$-morphisms are functors and whose $2$-morphisms are natural transformations.
\begin{defn}\label{def:Sli2cat} 
Let $\mathcal{K}$ be a $2$-category and let $c$ be an object of $\mathcal{K}$.
The \emph{thick slice category} $\mathcal{K}\slice c$ is defined as follows:
\begin{itemize}
\item Objects are morphisms $f\colon a\to c$ in $\mathcal{K}$.
\item A morphism from $f\colon a\to c$ to $g\colon b\to c$ is a pair $(h,\eta)$, where $h\colon a\to b$ is a $1$-morphism in $\mathcal{K}$ and $\eta\colon f\Rightarrow g\circ h$ is a $2$-morphism; see Diagram~\eqref{dia:mapinSlice}.
\end{itemize}
\begin{equation}\label{dia:mapinSlice}
    \begin{tikzcd}[column sep=huge,row sep=large]
a
\arrow[rr,"h"]
\arrow[dr,"f"',""{name=A,right}] && b
\arrow[dl,"g",""{name=B,left}] 
 \arrow[Rightarrow, from=A, to=B, "\eta"]\\
&  c &  
\end{tikzcd}
\end{equation}
We define the functor $\Pi\colon\mathcal{K}{\slice} c \to \mathcal{K}$ by sending $f\colon a\to c$ to $a$ and $(h,\eta)$ to $h$. 
\end{defn}
\begin{defn}\label{def:GenGroth}
Let $\catC$ {be a ctegory, let $\catE$ be a small category}, and let 
$
F\colon \catC \to \catCat{\slice}\catE
$
be a functor. Write $\bar{F}$ for the composite $\Pi\circ F\colon\catC\to\catCat$.
The \emph{relative Grothendieck construction} is the functor
\[
\int_{\catC}F\colon \int_{\catC} \bar{F} \to \catE,
\] 
defined as follows:
\begin{itemize}
\item For an object $(c,x)$ of $\int_{\catC}\bar{F}$, where $x$ is an object of $\bar{F}(c)$, set
\[
(\int_{\catC} F)(c,x)= F(c)(x).
\]
\item For a morphism $(h,\gamma)\colon(c,x)\to(d,y)$ in $\int_{\catC}\bar{F}$, where {$h \colon c \to d$ in $\catC$ and}
$\gamma\colon \bar{F}(h)(x)\to y$ lies in $\bar{F}(d)$, suppose that $F(h)=(\bar{F}(h),\eta)$. Then we define
\[
(\int_{\catC} F)(h,\gamma)= F(d)(\gamma) \circ \eta_x.
\]
\end{itemize}
\end{defn}

The composite above has the form
\[
F(c)(x) \xrightarrow{\eta_x} F(d)(\bar{F}(h)(x))
\xrightarrow{F(d)(\gamma)} F(d)(y),
\]
where $\eta$ is a natural transformation $F(c)\to F(d) \circ \bar{F}(h)$, and $F(d)$ is a functor $\bar{F}(d)\to \catE$.

The relative Grothendieck construction is functorial; that is, it defines a functor
$$
[\catC,\catCat \slice \catE] \longrightarrow \catCat \slice \catE.
$$
As a consequence, we obtain the following result.

\begin{pro}\label{pro:NatNat}
A natural isomorphism between functors
$$
F\colon \catC \longrightarrow \catCat \slice \catE
\quad \text{and} \quad 
G\colon \catC \longrightarrow \catCat \slice \catE
$$
induces a natural isomorphism between the corresponding relative Grothendieck constructions
$$
\int_{\catC} F \quad \text{and} \quad \int_{\catC} G .
$$
\end{pro}
% $\epsilon, \mu,\xi, \kappa$.

\end{document}